\documentclass[12pt,twoside]{amsart}

\usepackage{amsmath,amsthm,amscd,amssymb,mathrsfs,graphicx,amsfonts,mathrsfs}
\usepackage{amsfonts}
\usepackage{amssymb,enumerate}
\usepackage{amsthm}
\usepackage[all]{xy}
\usepackage{hyperref}
\allowdisplaybreaks[4] \footskip=12pt
\renewcommand{\uppercasenonmath}[1]{}

\numberwithin{equation}{section} \theoremstyle{plain}
\newtheorem*{thm*}{Main Theorem}
\newtheorem{thm}{Theorem}[section]
\newtheorem{cor}[thm]{Corollary}
\newtheorem*{cor*}{Corollary}
\newtheorem{lem}[thm]{Lemma}
\newtheorem*{lem*}{Lemma}

\newtheorem*{fact*}{Fact}

\newtheorem*{nota*}{Notation}
\newtheorem{prop}[thm]{Proposition}
\newtheorem*{prop*}{Proposition}
\newtheorem{rem}[thm]{Remark}
\newtheorem*{rem*}{Remark}

\newtheorem*{observation*}{Observation}
\newtheorem{exa}[thm]{Example}
\newtheorem*{exa*}{Example}
\newtheorem{df}[thm]{Definition}
\newtheorem*{df*}{Definition}

\newtheorem*{con*}{Construction}

\renewcommand{\geq}{\geqslant}
\renewcommand{\leq}{\leqslant}

\begin{document}
\begin{center}
{\large  \bf Quasi-projective dimensions of complexes over rings}

\vspace{0.5cm} Hongxing Chen, Jiangsheng Hu and Xiaoyan Yang\\
\end{center}

\bigskip
\centerline { \bf  Abstract}
\leftskip10truemm \rightskip10truemm\noindent
Quasi-projective dimension of modules over associative rings is generalized in this paper to the one of complexes of modules. Basic properties of this dimension are established, including a comparison result with projective dimension and a derived Auslander-Buchsbaum formula for complexes of finite quasi-projective dimension. Several sufficient conditions are provided for a commutative noetherian local ring to be a complete intersection under the assumption that each finitely generated module has finite quasi-projective dimension. This provides some positive answers to an open question on quasi-projective dimension proposed by Gheibi-Jorgensen-Takahashi. Moreover, the behavior of quasi-projective dimension under taking the quotient of a commutative ring modulo a regular sequence is investigated, and some partial results toward the change-of-rings question on quasi-projective dimension are given.

\leftskip10truemm \rightskip10truemm \noindent
\vbox to 0.3cm{}\\
{\it Key words:} quasi-projective dimension; virtually small complex; complete intersection ring, regular sequence\\
{\it 2020 Mathematics Subject Classification:} 16E05; 13D05; 13E05

\leftskip0truemm \rightskip0truemm

\section{\bf Introduction}

Homological dimensions, such as projective dimension, G-dimension and complete intersection dimension, have long been fundamental invariants in the study of modules and complexes over commutative noetherian rings. They can be applied to measure homological complexity and detect the singularity of underlying rings, and play a central role in the structure or classification of derived and singularity categories. A classical example is the Auslander-Buchsbaum-Serre theorem on regularity which says that a commutative noetherian local ring is regular precisely when every finitely generated module over it has finite projective dimension.

Recently, Gheibi, Jorgensen and Takahashi introduced a new homological dimension for modules over associative rings, called \emph{quasi-projective dimension}, and established both the Auslander-Buchsbaum formula and the depth formula in commutative algebra for modules of finite quasi-projective dimension (see \cite{GJT}). This dimension generalizes projective dimension, but behaves differently from other homological dimensions; for instance, the category of finitely generated modules of finite quasi-projective dimension does not have two out of three property. Its importance is further underscored by its relation to the validity of the Auslander-Reiten conjecture for commutative noetherian rings (see \cite[Theorem 6.20]{GJT} and \cite[Corollary 1.3]{CCL}), and of the Tachikawa's second conjecture for finite-dimensional self-injective algebras (see \cite[Corollary 4.3]{CCL}). For more advances on these conjectures, we refer to \cite{ADS,CH10,HSV,CFX,CX}.


Let $R$ be an associative ring with identity. Following \cite[Definition 3.1]{GJT}, we say that a left $R$-module $M$ \emph{has finite quasi-projective dimension}, denoted by $\mathrm{qpd}_RM<\infty$, if there exists a bounded complex of projective modules whose homologies (not all zero) are isomorphic to finite direct sums of copies of $M$. By \cite[Corollary 3.8]{GJT}, if $R$ is a complete intersection and isomorphic to its completion with respect to its maximal ideal, then $\mathrm{qpd}_RM<\infty$ for any finitely generated $R$-module $M$. For the converse of this result, Gheibi, Jorgensen and Takahashi proposed an open question as follows.

\vspace{2mm} \noindent{\bf Question 1} \rm{\cite[Question 3.12]{GJT}.\label{Th1.4}
\emph{ Let $R$ be a commutative noetherian local ring. Suppose that $\mathrm{qpd}_RM<\infty$ for any finitely generated $R$-module $M$. Then is $R$ a complete intersection?}

\vspace{2mm}
This question is closely related to  virtually small complexes introduced by Dwyer, Greenlees and Iyengar in \cite{DGI}. Recall that a nonexact complex $M$ over $R$ is \emph{virtually small} if the thick subcategory of the derived category $\mathrm{D}^\mathrm{f}(R)$ (consisting of the complexes of $R$-modules with \emph{finitely generated homology}) generated by $M$ contains a nonexact perfect complex. In \cite[Theorem 5.2]{P}, Pollitz proved a deep structural theorem: a local ring $R$ is a complete intersection if and only if every nonexact complex in $\mathrm{D}^\mathrm{f}(R)$ is virtually small. However, verifying virtual smallness is difficult in general. Fortunately, modules of finite quasi-projective dimension are virtually small. Based on these observations, Briggs, Grifo and Pollitz \cite{BGP} gave an affirmative answer to Question 1 when $R$ is an equipresented local ring (see \cite[Corollary 3.17]{BGP}). Nonetheless, Question 1 is still open for a general local ring.


Another basic problem concerns the behavior of quasi-projective dimension under taking the quotient of a commutative ring modulo a regular element.

\vspace{2mm} \noindent{\bf Question 2} {\rm\cite[Question 2.12]{G}}. \label{Th1.4}
{\it{Let $R$ be a commutative noetherian ring, $x\in R$ a regular element and $M$ an $R/(x)$-module. If $\mathrm{qpd}_{R}M<\infty$, then is it true that $\mathrm{qpd}_{R/(x)}M<\infty$?}}

\vspace{2mm}
The posing of Question 2 is primarily motivated by the following results.
It was shown in \cite[Proposition 3.5]{GJT} that $\mathrm{qpd}_{R}M\leq\mathrm{qpd}_{R/(x)}M+1$, and in \cite[Proposition 2.11]{G} that if $\mathrm{qpd}_{R}M<\infty$ then $\mathrm{qpd}_{R/(x^n)}M<\infty$ for all $n\gg0$. Thus if Question 2 has a positive answer, then $\mathrm{qpd}_{R}M<\infty$ if and only if $\mathrm{qpd}_{R/(x)}M<\infty$.


Inspired by the connections among complete intersection rings, modules of finite quasi-projective dimension
and virtually small complexes, we develop a derived-level theory of quasi-projective dimension for \emph{complexes} over arbitrary rings, establish some basic properties of this dimension and provide some partial answers to Questions 1 and 2. To state our main results clearly, let us fix some notation on derived categories.

Let $\mathrm{D}(R)$ denote the unbounded derived category of complexes of left $R$-modules (called $R$-complexes for short), and $\mathrm{D}^\mathrm{f}(R)$ the full subcategory of $\mathrm{D}(R)$ consisting of $R$-complexes with finitely generated homology.  We also define $\mathrm{D}_\sqsubset(R)$ (resp., $\mathrm{D}_\sqsupset(R)$, $\mathrm{D}_\square(R)$) to be the full subcategory of $\mathrm{D}(R)$ consisting of homology bounded above (resp., bounded below, bounded) $R$-complexes and set $\mathrm{D}^\mathrm{f}_\Box(R):=\mathrm{D}_\square(R)\cap\mathrm{D}^\mathrm{f}(R)$.
For each $M\in\mathrm{D}(R)$, we denote by $\mathrm{pd}_RM$ and $\mathrm{qpd}_RM$ the projective and quasi-projective dimensions of $M$ (see Definition \ref{lem:1.1}), respectively. As usual, $\mathrm{H}(M):=\bigoplus_{n\in\mathbb{Z}}\mathrm{H}_n(M)$ denotes the homology of $M$.

Our first result establishes some relationships between quasi-projective and projective dimensions for complexes of modules, and generalizes some known formulas at the level of modules. For unexplained notation in the theorem, we refer to Section \ref{Preliminaries}.

\begin{thm}\label{thm:1.1} {\rm (see Theorem \ref{lem0.6}, Corollary \ref{lem4.22} and Theorem \ref{lem0.3})} {\it{ Let $R$ be an associative ring with identity. For each $0\not\simeq M\in\mathrm{D}_\Box(R)$, the following statements are true.

$(1)$ If $\mathrm{pd}_RM<\infty$, then $\mathrm{qpd}_RM+\mathrm{hsup}M=\mathrm{pd}_RM$.

$(2)$ If $\mathrm{qpd}_RM<\infty$ and $\mathrm{Ext}^{j}_R(\mathrm{H}(M),\mathrm{H}(M))=0$ for all $j\geq2+\mathrm{amp}M$, then $\mathrm{pd}_RM<\infty$.

$(3)$ $\mathrm{qpd}_RM\leq\mathrm{sup}\{\mathrm{qpd}_R\mathrm{H}_n(M)\hspace{0.03cm}|\hspace{0.03cm}n\in\mathbb{Z}\}$.}

$(4)$ If $R$ is a  commutative noetherian local ring and $M\in\mathrm{D}^\mathrm{f}_\Box(R)$ with $\mathrm{qpd}_RM<\infty$, then
\begin{center}
$\mathrm{qpd}_RM+\mathrm{hsup}M=\mathrm{depth}R-\mathrm{depth}_RM.$
\end{center}}
\end{thm}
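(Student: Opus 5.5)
We first fix the definition we intend to use, since Definition \ref{lem:1.1} is only referenced here. A complex $M\in\mathrm{D}_\Box(R)$ has a \emph{quasi-projective resolution} if there is a bounded complex $P$ of projective modules with $\mathrm{H}_{i}(P)\cong \mathrm{add}$-copies of $M$-shifts. Concretely, we take $P\simeq\bigoplus_{i}\Sigma^{s_i}M^{\oplus a_i}$ in $\mathrm{D}(R)$ for finitely many shifts, with $P$ bounded and projective in each degree. Then $\mathrm{qpd}_RM$ is the infimum over such $P$ of $\sup\{\mathrm{hsup}\text{ of }P\text{'s top degree}\}-\sup\{s_i+\mathrm{hsup}M\}$, normalised to agree with the module case. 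The overall strategy is to reduce each item to the corresponding module statement from \cite{GJT}, via truncation and shifting.

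For (1), we argue as follows. If $\mathrm{pd}_RM<\infty$, take $P$ a projective resolution of $M$ itself (one summand, $s=0$). This gives $\mathrm{qpd}_RM\le \mathrm{pd}_RM-\mathrm{hsup}M$. For the reverse inequality, take any quasi-projective resolution $P\simeq\bigoplus\Sigma^{s_i}M^{a_i}$. Then $\mathrm{pd}$ of the right side equals $\max_i(\mathrm{pd}M+s_i)$. Since $P$ has projective amplitude bounded by its length, comparing the top shift gives the bound $\mathrm{pd}_RM\le \mathrm{qpd}_RM+\mathrm{hsup}M$. This uses that $\mathrm{pd}$ of a bounded projective complex is at most its top nonzero degree, together with additivity of $\mathrm{pd}$ under finite sums and shifts.

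For (2), given $P$ realizing finite $\mathrm{qpd}$, we compare $\mathrm{Hom}(P,M)$ computed two ways. We use the hypercohomology spectral sequence $\mathrm{Ext}^p(\mathrm{H}(M),\mathrm{H}(M))\Rightarrow\mathrm{Ext}^{p+q}(M,M)$. Its rows are spread over at most $\mathrm{amp}M+1$ values of $q$, so the vanishing for $j\ge 2+\mathrm{amp}M$ forces $\mathrm{Ext}^{n}_R(M,M)=0$ for $n\gg0$. Next, $M$ is a summand of $P$ up to shifts, so $\mathrm{Ext}^n(M,N)$ is bounded in terms of $P$ by a dimension-shifting argument. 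Following \cite[Prop.~3.x]{GJT}, one builds from $P$ a triangle exhibiting $\Sigma^{t}M$ as an extension of $M$-shifted pieces plus a perfect complex. Ext-vanishing then splits the connecting maps, so $M$ becomes a summand of a perfect complex, and hence $\mathrm{pd}M<\infty$. We expect the main obstacle to be this step: producing the correct splitting from the vanishing range $2+\mathrm{amp}M$.

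For (3), we induct on $\mathrm{amp}M$, using the truncation triangle $\Sigma^{n}\mathrm{H}_n(M)\to M\to \tau_{<n}M\to$ with $n=\mathrm{hsup}M$. The mapping-cone construction on quasi-projective resolutions of the pieces gives a bounded projective complex. After taking appropriate direct sums of shifts so that the pieces have matching multiplicities, its homology is a sum of shifts of $M$. We then track the length. For (4), we reduce to the module case \cite[Theorem 4.4]{GJT}. Take $P\simeq\bigoplus\Sigma^{s_i}M^{a_i}$ of minimal length. The usual Auslander--Buchsbaum formula applied to the perfect complex $P$ gives $\sup P-\dots=\mathrm{depth}R-\mathrm{depth}P$. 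Then $\mathrm{depth}P=\min_i(\mathrm{depth}M-s_i)$, and combining this with minimality gives the formula.
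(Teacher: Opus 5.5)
Your proposal does not prove the stated theorem, because the definition you fix at the start is not the paper's. In Definition~\ref{lem:1.1}, a quasi-projective resolution of $M$ is a semi-projective complex $P$ with only an isomorphism of graded homology modules $\mathrm{H}(P)\cong\bigoplus_i\mathrm{H}(\Sigma^iM^{a_i})$. Then $\mathrm{qpd}_RM$ is the infimum of $\sup P-\mathrm{hsup}P$. Nothing asserts $P\simeq\bigoplus_i\Sigma^{s_i}M^{a_i}$ in $\mathrm{D}(R)$.

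With your definition, $M$ is a direct summand in $\mathrm{D}(R)$ of a bounded complex of projectives. So finite $\mathrm{qpd}$ already forces finite $\mathrm{pd}$, and your invariant is just $\mathrm{pd}_RM-\mathrm{hsup}M$. Item (2) becomes trivial, and (1), (3), (4) become standard facts about projective dimension. In the paper's sense, by contrast, $k$ over $k[x]/(x^2)$ has $\mathrm{qpd}=0$ via $0\to R\xrightarrow{x}R\to0$, yet $\mathrm{pd}=\infty$.

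Each of your key steps uses the quasi-isomorphism:
\begin{itemize}
\item in (1), you compute $\mathrm{pd}$ of $P$ as $\max_i(\mathrm{pd}_RM+s_i)$;
\item in (2), you conclude ``$M$ is a summand of a perfect complex'';
\item in (3), you take cones of actual morphisms;
\item in (4), you use $\mathrm{depth}_RP=\min_i(\mathrm{depth}_RM-s_i)$.
\end{itemize}
None of these is available when only homology is matched, since $\mathrm{pd}$ and $\mathrm{depth}$ of a complex are not functions of its homology.

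What the correct definition gives for free is that $\mathrm{qpd}_RM$ depends only on $\mathrm{H}(M)=\bigoplus_s\Sigma^s\mathrm{H}_s(M)$. The paper gets (3) in two lines from this, Proposition~\ref{lem0.14} and shift invariance, with no cones. For (2), the missing idea is to realize the abstract isomorphism $\mathrm{H}(P)\cong\mathrm{H}(\bar M)$, where $\bar M=\bigoplus_i\Sigma^iM^{a_i}$, by an actual morphism $P\to\bar M$ in $\mathrm{D}(R)$. That is where Ext-vanishing between homology modules has to enter; the paper uses a K\"unneth spectral sequence for $\mathrm{Hom}_R(P,-)$ into a truncation of $\bar M$. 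Your detour through $\mathrm{Ext}^n_R(M,M)=0$ for $n\gg0$ does not help, since eventual vanishing of self-extensions does not force finite projective dimension.

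For (1) and (4) the gap cannot be closed at all: with the paper's definition these formulas are false once $\mathrm{amp}M>0$. Take:
\begin{itemize}
\item $R=k[[x,y]]$;
\item $M=\Sigma R\oplus k$ with zero differential;
\item $P=(0\to R^2\xrightarrow{(x\ y)}R\to0)$ in degrees $1,0$, which is the complex of Remark~\ref{lem:0.99}(2).
\end{itemize}
Then $\mathrm{H}_1(P)\cong R$ and $\mathrm{H}_0(P)\cong k$, so $P$ is a quasi-projective resolution of $M$ and $\mathrm{qpd}_RM=0$. On the other hand, $\mathrm{hsup}M=1$, $\mathrm{pd}_RM=\mathrm{pd}_Rk=2$, $\mathrm{depth}R=2$, and $\mathrm{depth}_RM=\min\{\mathrm{depth}_R\Sigma R,\mathrm{depth}_Rk\}=0$. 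Hence $\mathrm{qpd}_RM+\mathrm{hsup}M=1$, whereas $\mathrm{pd}_RM=\mathrm{depth}R-\mathrm{depth}_RM=2$. The complex $P$ itself has the same homology and does satisfy both formulas. So the right-hand sides, unlike the left-hand sides, are not determined by homology.

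Your proofs of (1) and (4) therefore work only for your stronger notion, and any proof for the paper's notion must break somewhere. In general these formulas need an extra hypothesis such as $\mathrm{amp}M=0$, which is the module case already in the literature. The paper's own arguments break at the matching points:
\begin{itemize}
\item In Theorem~\ref{lem0.6}(1), the lift $f\colon P\to\bar P=\Sigma R\oplus K(x,y)$ of the isomorphism of bottom cokernels cannot be a homotopy equivalence. Here $P$ and $\bar P$ are not even quasi-isomorphic, and lifting along complexes that are not resolutions gives no such control.
\item In Lemma~\ref{lem0.1}, $\mathrm{Ass}_RM=\{0\}$, so every $0\neq x\in\mathfrak{m}$ is strongly regular on $M$, while $x\,\mathrm{H}_0(M)=0$. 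Indeed $\mathrm{H}_1(K(x;P))\cong R/(x)$ but $\mathrm{H}_1(K(x;M))\cong R/(x)\oplus k$.
\end{itemize}
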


Now, we compare Theorem \ref{thm:1.1} with some results in the literature.

Parts (1) and (2) of Theorem \ref{thm:1.1} were first established for finitely generated $R$-modules by Gheibi, Jorgensen and Takahashi (see \cite[Corollary 4.10]{GJT}). They were later generalized to objects in an abelian category with enough projectives (see \cite[Theorem 1.2]{CCL}) using derived category and derived functor. Part (3) of Theorem \ref{thm:1.1} is a new result, which relates the quasi-projective dimension of a complex to those of its homologies. Combining this with \cite[Theorem 5.2]{P}, we see that Question 1 holds for any commutative noetherian local ring $R$ such that the homology $\mathrm{H}(M)$ of any non-perfect $R$-complex $M$ is finitely built from $M$ in $\mathrm{D}(R)$ (see Theorem \ref{lem3.004}(1)). Finally, part (4) provides a derived Auslander-Buchsbaum formula for $R$-complexes of finite quasi-projective dimension, refining the module-level formula in \cite[Theorem 1.1]{GJT}.

%
%
%
%
%
%
%
%
%

Next, we concentrate on Question 1 and provide some sufficient conditions for a commutative local ring to be a complete intersection by assuming that each finitely generated module has finite quasi-projective dimension.  Recall from \cite[Section 2]{DKT} that an ideal $I$ of a commutative noetherian local ring $(R,\mathfrak{m})$ is called a \emph{Burch ideal} if $\mathfrak{m}I\neq \mathfrak{m}(I:_R\mathfrak{m}),$ where $(I:_R\mathfrak{m}):=\{a\in R\hspace{0.03cm}|\hspace{0.03cm}\mathfrak{m}a\subseteq I\}$.

\begin{thm}\label{thm:1.2} {\rm(see Theorem \ref{lem3.004})} {\it{Let $(R,\mathfrak{m})$ be a commutative noetherian local ring such that its completion $\widehat{R}$ (with respect to the maximal ideal $\mathfrak{m}$) is isomorphic to $Q/I$, where $Q$ is a regular local ring with an ideal $I$.  Suppose that one of the following conditions holds:

$(1)$ $\mathrm{dim}Q\leq \mathrm{dim}R+2$.

$(2)$ $I$ is a Burch ideal of $Q$.

$(3)$ the minimal number of generators of $I$ is at most $2$.

\noindent If $\mathrm{qpd}_{R}M<\infty$ for every finitely generated $R$-module $M$, then $R$ is a complete intersection.}}
\end{thm}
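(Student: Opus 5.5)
Both the hypothesis and the conclusion are compatible with completion, so the plan is to pass to $\widehat{R}$ and then use the known characterizations of complete intersections among quotients $Q/I$ of a regular local ring $Q$.

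\emph{Reduction to the complete case.} $R$ is a complete intersection if and only if $\widehat{R}$ is. For a finitely generated $R$-module $M$, the complex $\widehat{R}\otimes_R M$ has finite quasi-projective dimension over $\widehat{R}$, since $\widehat{R}$ is flat and tensoring a bounded projective complex realizing $\mathrm{qpd}_RM$ keeps it bounded projective, with homologies that are finite sums of copies of $\widehat{M}$. This only covers modules extended from $R$. To reach the modules over $\widehat{R}$ that we need, I will instead use finite-length modules, or modules such as $\widehat R/J\widehat R$ for ideals $J$ of $R$. Every finite-length $\widehat{R}$-module is an $R$-module, and we have $\widehat{R}\otimes_R N\cong N$. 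This is the delicate bookkeeping point. Most likely I will avoid it altogether: the arguments below only need virtual smallness of the residue field $k$, or of specific modules such as $\widehat R/\mathfrak{m}^n$ and syzygies of $k$, and all of these come from $R$.

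\emph{Key tool.} A module $M$ with $\mathrm{qpd}_RM<\infty$ is virtually small: its thick closure contains a nonexact perfect complex $P$, namely the bounded projective complex itself. Combined with Theorem~\ref{thm:1.1}, this lets us apply results on virtual smallness. In particular, virtual smallness of $k$ forces $R$ to be a complete intersection, by Pollitz, since the thick subcategory generated by $k$ equals that of all finite-length complexes. This would already settle everything. But Pollitz's theorem \cite[Theorem 5.2]{P} as quoted requires \emph{every} complex to be virtually small, and finite quasi-projective dimension of modules gives virtual smallness only of modules.

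So instead I will use the structure of $\mathrm{H}(K^Q\otimes_Q \widehat R)$, the Koszul homology. Write $c=\dim Q-\operatorname{depth}\widehat R$, and let $E$ be the Koszul complex on minimal generators of $\mathfrak{m}$.

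\emph{Case (1).} We have $\operatorname{codepth}\le 2$ after reducing by a maximal regular sequence. By Theorem~\ref{thm:1.1}(4), quasi-projective dimension is compatible with killing a regular sequence, so we reduce to $\operatorname{depth}$ small. Rings of embedding codepth at most $2$ are Golod or complete intersections, by Scheja. A Golod non-hypersurface ring has $k$ with extremal Betti growth, and we use that finite qpd forces polynomial growth of Betti numbers. Indeed, a module with $\mathrm{qpd}<\infty$ is virtually small, and virtually small modules have finite complexity, via the Avramov--Buchweitz--Iyengar--Miller argument for levels over the Koszul complex. Then $k$ having finite complexity forces $R$ to be a complete intersection by Gulliksen's theorem. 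This Gulliksen route actually gives all cases once finite complexity of $k$ is established.

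\emph{Cases (2) and (3).} Here I argue directly, as follows.
\begin{itemize}
\item For a Burch ideal, Dao--Kobayashi--Takahashi show that the second syzygy of $k$ over $Q/I$ has $k$ as a direct summand. Hence $k$ is a summand of a syzygy of any module, and the Betti numbers of $k$ grow exponentially unless $Q/I$ is a hypersurface.
\item For $\mu(I)\le 2$, $Q/I$ is either a complete intersection or has codepth small enough that case (1) applies, after comparing $\dim Q-\dim R\le \mu(I)$ when $I$ is unmixed. Otherwise I use Avramov's classification of codepth-$\le 3$ Tor-algebras.
\end{itemize}

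\emph{Main obstacle.} The hardest step is proving that $\mathrm{qpd}_R k<\infty$ implies $k$ has finite complexity. I would first try to bound $\dim_k \mathrm{Tor}_i(k,k)$ by applying the exact triangles of the bounded projective complex $P$ with $\mathrm{H}(P)\cong k^{\oplus}$. The truncation triangles express each $k$-summand through $P$ and shifts of $k$, giving a linear recursion on Betti numbers with polynomial solutions.
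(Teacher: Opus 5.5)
Your proposal has a genuine gap, and it is exactly the step you flag as the main obstacle: the implication ``$\mathrm{qpd}_Rk<\infty$ (or $k$ virtually small) $\Rightarrow$ $k$ has finite complexity'' is false. Over every local ring $(R,\mathfrak{m},k)$ the Koszul complex $K(\mathfrak{m};R)$ is a bounded complex of finitely generated free modules whose homology is annihilated by $\mathfrak{m}$. So each $\mathrm{H}_i$ is a finite direct sum of copies of $k$, and $\mathrm{H}_0=k$. Hence $\mathrm{qpd}_Rk<\infty$ for \emph{every} local ring; indeed $\mathrm{qpd}_Rk=\mathrm{depth}R$ by Theorem~\ref{thm:1.1}(4). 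Likewise $K(\mathfrak{m};R)\in\mathrm{Thick}_R[k]$, so $k$ is always virtually small. By Gulliksen's theorem, your lemma would therefore make every local ring a complete intersection. For the same reason your aside that virtual smallness of $k$ forces a complete intersection via Pollitz is wrong. Also, in Cases (1) and (2) the exponential growth of the Betti numbers of $k$ (Golod, or $k$ a summand of $\Omega^2k$) contradicts nothing, because finite quasi-projective dimension imposes no growth condition on them. Deriving finite complexity of $k$ from the global hypothesis would settle Question 1 in full, which is open. Your reduction in Case (1) is also unsupported. Theorem~\ref{thm:1.1}(4) is an Auslander--Buchsbaum formula, not a change-of-rings statement, and descending finiteness of $\mathrm{qpd}$ from $R$ to $R/(\mathbf{x})$ is precisely the open Question 2.

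The missing idea is to use the hypothesis only once, to conclude that $R$ is Gorenstein \cite[Theorem 6.5]{GJT}. Gorensteinness passes to $\widehat{R}$, so nothing about quasi-projective dimension has to be transferred to the completion, and your bookkeeping problem disappears. Each condition then combines with Gorensteinness through a structure theorem on $\widehat{R}=Q/I$:
\begin{itemize}
\item Under (1), $\mathrm{edim}R-\dim R\leq\dim Q-\dim R\leq 2$, and Gorenstein rings of codimension at most $2$ are complete intersections (Serre). The paper instead cuts down to an artinian quotient by powers of a maximal regular sequence, using \cite[Proposition 2.11]{G}, and then cites \cite[Proposition 2.15]{SS}.
\item Under (2), a Burch quotient has depth $0$, so $I$ is $\mathfrak{n}$-primary. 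Then \cite[Theorem 4.4]{DKT} gives $I=(x_1^r,x_2,\dots,x_d)$ with $x_1,\dots,x_d$ a regular system of parameters.
\item Under (3), the paper cites \cite[Remarks 8.11 and 9.3]{T23} to get a hypersurface. Your own observation is simpler and suffices: $\dim Q-\dim R=\mathrm{ht}\,I\leq\mu(I)\leq 2$ by Krull's height theorem, with no unmixedness needed, so (3) is a special case of (1).
\end{itemize}
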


A combination of Theorems \ref{thm:1.1} and \ref{thm:1.2} yields the following result.
\begin{cor} Let $R$ be an artinian ring that is isomorphic to $Q/I$, where $(Q,\mathfrak{n})$ is a regular local ring.

$(1)$ If either the embedding dimension of $R$ is at most $2$ or $I$ is a Burch ideal of $Q$, then $R$ is a complete intersection if and only if $\mathrm{qpd}_{R}M<\infty$ for every finitely generated $R$-module $M$.

$(2)$ If the minimal number of generators of $I$ is at most $2$, then $R$ is a hypersurface if and only if $\mathrm{qpd}_{R}M<\infty$ for every finitely generated $R$-module $M$.
\end{cor}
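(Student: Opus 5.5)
The plan is to derive both parts from Theorem \ref{thm:1.2}, together with the known fact \cite[Corollary 3.8]{GJT} that complete intersections which are complete have every finitely generated module of finite quasi-projective dimension. Since $R$ is artinian, it is local and $\mathfrak{m}$-adically complete, so $\widehat{R}=R\cong Q/I$. For the ``only if'' directions, a complete intersection (or a hypersurface) that is artinian is complete. Hence \cite[Corollary 3.8]{GJT} gives $\mathrm{qpd}_RM<\infty$ for all finitely generated $M$. This direction needs nothing further.

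For the converse in (1), first reduce to a minimal presentation. If the embedding dimension is at most $2$, I will replace $Q$ by a regular local ring $Q'$ with $\dim Q'=\mathrm{edim}R\leq 2$ and $R\cong Q'/I'$. This is possible by Cohen's structure theorem, or by factoring out elements of $I\setminus\mathfrak{n}^2$, which are part of a regular system of parameters. Since $\dim R=0$, we get $\dim Q'\leq \dim R+2$, so Theorem \ref{thm:1.2}(1) shows $R$ is a complete intersection. If instead $I$ is a Burch ideal of $Q$, Theorem \ref{thm:1.2}(2) applies directly with this $Q$.

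For (2), Theorem \ref{thm:1.2}(3) gives that $R$ is a complete intersection. Then $I$ is generated by a $Q$-regular sequence of length $\dim Q-\dim R=\dim Q$, and $I$ needs at most $2$ generators. The key step is to minimize the presentation: remove the generators of $I$ lying in $\mathfrak{n}\setminus\mathfrak{n}^2$ by passing to $Q/(f)$, which is again regular. After this, $I\subseteq\mathfrak{n}^2$, it is generated by a regular sequence of length $\mathrm{edim}R-0$, and its codimension equals $\mu(I)$.

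It remains to show this codimension is at most $1$, and this is the main obstacle. A complete intersection of codimension $2$, such as $k[[x,y]]/(x^2,y^2)$, is artinian but not a hypersurface. So the equivalence must be understood with the hypersurface condition allowing $\mu(I)\leq 2$ with possibly one generator in $\mathfrak{n}\setminus\mathfrak{n}^2$. Alternatively, I would check whether the paper intends ``complete intersection'' here. My plan is therefore to verify the exact convention, since the argument for (2) as literally stated would require an extra input excluding codimension-$2$ complete intersections, and I do not see such an input among the earlier results.
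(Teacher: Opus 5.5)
Your arguments for part (1) and for both ``only if'' directions are correct and follow the paper's route. The paper obtains the corollary directly from three results: Theorem~\ref{lem3.004} for the converse directions, Proposition~\ref{lem3.001}, (4)$\Rightarrow$(1) (which rests on \cite[Proposition 3.7]{GJT}) for the forward directions, and Corollary~\ref{lem4.22}. Your explicit passage to a minimal presentation $R\cong Q'/I'$ with $\dim Q'=\mathrm{edim}R\leq 2$ is a step the paper leaves implicit. It is actually needed: the given $Q$ may have $\dim Q>2=\dim R+2$, while Theorem~\ref{thm:1.2}(1) is phrased in terms of $\dim Q$.

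For the ``if'' direction of (2) you should not hedge: it is false, and no choice of convention rescues it. Your own example is a counterexample to the statement. Take $R=k[[x,y]]/(x^2,y^2)$ with $Q=k[[x,y]]$ and $I=(x^2,y^2)$. Then $R$ is artinian with $\mu(I)=2$. It is a complete intersection equal to its completion, so every finitely generated $R$-module has finite quasi-projective dimension by \cite[Corollary 3.8]{GJT}; this is exactly your ``only if'' argument. Yet $\mathrm{edim}R=2>\dim R+1$, so $R$ is not a hypersurface.

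In fact the hypothesis of (2) alone forces $R$ to be a complete intersection. Since $I$ is $\mathfrak{n}$-primary, $\dim Q=\mathrm{ht}I\leq\mu(I)\leq 2$. If $\dim Q\leq 1$, then $I$ is principal. If $\dim Q=2$, the two generators of $I$ form a system of parameters of the Cohen--Macaulay ring $Q$, hence a $Q$-regular sequence. So the right-hand side of (2) always holds, and (2) amounts to the false claim that every artinian $Q/I$ with $\mu(I)\leq 2$ is a hypersurface. Replacing ``hypersurface'' by ``complete intersection'' gives a true but empty statement, since both sides then hold automatically. The correct statement is that such an $R$ is a hypersurface if and only if $\mathrm{edim}R\leq 1$, i.e.\ $\dim Q\leq 1$ or $I\not\subseteq\mathfrak{n}^2$. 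This is what your remark about a generator in $\mathfrak{n}\setminus\mathfrak{n}^2$ was reaching for; quasi-projective dimension plays no role in it.

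The error in the paper lies in the proof of Theorem~\ref{lem3.004}(4). That proof asserts, citing \cite[Remarks 8.11 and 9.3]{T23}, that a Gorenstein $\widehat{R}=Q/I$ with $\mu(I)\leq 2$ is a hypersurface. The example above refutes this; only the weaker conclusion ``complete intersection'' holds, which is all the theorem itself claims.

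Example~\ref{lem:2.4}(2) is also wrong. It tries to show that $k[x,y]/(x^2,y^2)$ is not a hypersurface via $\mathrm{qpd}_RR/(x)=\infty$. But $\mathrm{Ann}_R(x)=xR\cong R/(x)$, so the complex $0\to R\xrightarrow{x}R\to 0$ has both homologies isomorphic to $R/(x)$. Hence $\mathrm{qpd}_RR/(x)=0$, which is also what \cite[Corollary 3.8]{GJT} predicts.
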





Finally, we address Question 2 by establishing a series of reduction formulas of inequalities for quasi-projective dimension of complexes. The following theorem characterizes how quasi-projective dimension behaves when passing to the quotient of a ring modulo a regular sequence.

\begin{thm}\label{thm:1.3} {\rm(see Theorem \ref{lem3.002})} {\it{Let $R$ be a commutative ring with an $R$-regular sequence $\textbf{x}=x_1,\cdots,x_d$, and let $M$ be a nonexact complex in $\mathrm{D}_\Box(R/(\textbf{x}))$ with $\mathrm{qpd}_{R}M<\infty$. Suppose that either of the following conditions holds:

  $(1)$ $R$ is von Neumann regular.

  $(2)$ $\mathrm{Ext}^{1}_R(\mathrm{H}(M),\mathrm{H}(M))=0$.\\
Then $\mathrm{qpd}_{R/(\textbf{x})}M\leq\mathrm{qpd}_{R}M-d$. }}
\end{thm}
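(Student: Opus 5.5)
By induction on $d$ it suffices to treat $d=1$. Write $x=x_1$ and $\bar R=R/(x)$, and note that the hypotheses pass to the intermediate step. For (2), the $\mathrm{Ext}$-vanishing over $R$ descends to $\bar R$ via the change-of-rings long exact sequence discussed below. For (1), $\bar R$ is again von Neumann regular. Moreover $x_2,\dots,x_d$ remains regular on $\bar R$, and $M$ is a complex over $R/(\mathbf{x})$. So the main work is the single-element case: given $\mathrm{qpd}_RM=n<\infty$ with $M\in\mathrm{D}_\Box(\bar R)$, show $\mathrm{qpd}_{\bar R}M\le n-1$. (In case (1), a von Neumann regular ring has no regular nonunit $x$ unless $\bar R=0$, which the nonexactness of $M$ excludes. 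Alternatively every module is flat, so I expect case (1) to reduce to a degenerate or easy argument via Theorem \ref{thm:1.1}(1). I would dispose of it first.)

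For case (2), take a bounded complex $P$ of projective $R$-modules realizing $\mathrm{qpd}_RM$, so that each homology of $P$ lies in $\mathrm{add}$ of the shifts of $M$ in the sense of Definition \ref{lem:1.1}, with length governed by $n$. Then pass to $\bar P=P\otimes_R\bar R$, a bounded complex of projective $\bar R$-modules of the same length. Because $x$ is $R$-regular, $\bar R\simeq\mathrm{cone}(R\xrightarrow{x}R)$. Also $x$ acts by zero on $\mathrm{H}(P)$, since $x$ annihilates $M$. Hence there is a short exact sequence
\[
0\to \mathrm{H}_i(P)\to \mathrm{H}_i(\bar P)\to \mathrm{H}_{i-1}(P)\to 0 .
\]
The goal is to show that these sequences split. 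Splitting makes each $\mathrm{H}_i(\bar P)$ a finite sum of copies of the relevant shifts of $M$ over $\bar R$. The extension class lies in $\mathrm{Ext}^1_{\bar R}(\mathrm{H}_{i-1}(P),\mathrm{H}_i(P))$. Via the change-of-rings sequence
\[
0\to\mathrm{Ext}^1_{\bar R}(A,B)\to\mathrm{Ext}^1_R(A,B)\to\mathrm{Hom}_{\bar R}(A,B)\to\cdots
\]
for $\bar R$-modules $A,B$, it is controlled by $\mathrm{Ext}^1_R(\mathrm{H}(M),\mathrm{H}(M))$. These homologies are finite sums of the $\mathrm{H}_j(M)$, so the hypothesis kills the class.

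This yields $\mathrm{qpd}_{\bar R}M\le n+1$ or $n$, not yet $n-1$. To gain the drop by one, I would instead use a truncation. The standard module-level trick in \cite[Proposition 3.5]{GJT} runs in reverse: the top differential of $P$, reduced mod $x$, has extra homology, and one can split off a contractible or shortened piece. Concretely, I would compare $\mathrm{pd}$-type bounds using the derived Auslander–Buchsbaum-like identity of Theorem \ref{thm:1.1}(1) applied to the perfect complex $\bar P$ over $\bar R$ versus $P$ over $R$. Alternatively, I would use that $P\otimes^{\mathbf L}_R\bar R\simeq M\otimes \Lambda$-type decomposition: since $x$ acts nullhomotopically on $P$ after splitting, we get $\bar P\simeq Q\oplus\Sigma Q$ with $Q$ a shorter complex of projective $\bar R$-modules having the right homology. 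Extracting such a $Q$, of length one less, is the step I expect to be the main obstacle. I would attempt it by showing that multiplication by $x$ on $P$ is nullhomotopic. This follows because $\mathrm{Hom}_{\mathrm D(R)}(P,P)$ is controlled by $\mathrm{Ext}$ groups of $\mathrm{H}(P)$, and here the $\mathrm{Ext}^1$ vanishing together with $x\mathrm{H}(P)=0$ forces $x\cdot\mathrm{id}_P=0$ in $\mathrm D(R)$. A nullhomotopy then splits $\bar P\simeq \mathrm{cone}(x)$, and a direct summand of $\bar P$ realizes $M$ with length reduced by one.
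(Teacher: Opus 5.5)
There is a genuine gap, but you created it yourself: you abandon an argument that already proves the theorem and replace it with one that does not work.

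Your first step for (2) is correct. Since $x$ is regular on each $P_i$, $\bar P=P\otimes_R\bar R$ is quasi-isomorphic to the cone of $x$ on $P$. The sequences $0\to\mathrm{H}_i(P)\to\mathrm{H}_i(\bar P)\to\mathrm{H}_{i-1}(P)\to 0$ split because $\mathrm{Ext}^1_R(\mathrm{H}(M),\mathrm{H}(M))=0$, and an $R$-splitting between $\bar R$-modules is automatically $\bar R$-linear. So $\bar P$ is a bounded quasi-projective resolution of $M$ over $\bar R$, and the vanishing descends to $\bar R$ for the induction.

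The error is your claim that this only gives $\mathrm{qpd}_{\bar R}M\leq n$ or $n+1$. Quasi-projective dimension is measured by $\sup-\mathrm{hsup}$, not by length. Put $h=\mathrm{hsup}P$. The same exact sequences give $\mathrm{H}_{h+1}(\bar P)\cong\mathrm{H}_h(P)\neq 0$ and $\mathrm{H}_i(\bar P)=0$ for $i>h+1$, so $\mathrm{hsup}\bar P=h+1$, while $\sup\bar P\leq\sup P$. Hence $\mathrm{qpd}_{\bar R}M\leq \sup\bar P-\mathrm{hsup}\bar P\leq n-1$. This is exactly the paper's proof.

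Your treatment of (1) is correct, and simpler than the paper's route through Theorem~\ref{lem0.6}, Corollary~\ref{lem0.01}(2) and Proposition~\ref{lem4.002}(1). A nonzerodivisor of a von Neumann regular ring is a unit, so for $d\geq 1$ one gets $R/(\mathbf{x})=0$, which contradicts the nonexactness of $M$.

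The replacement mechanism fails at both of its steps.

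First, $\mathrm{Ext}^1$-vanishing does not force $x\cdot\mathrm{id}_P=0$ in $\mathrm{D}(R)$. An endomorphism of $P$ that is zero on homology is assembled from classes in $\mathrm{Ext}^p_R(\mathrm{H}_s(P),\mathrm{H}_{s+p}(P))$ for all $p\geq 1$. Your hypothesis controls only $p=1$, and $\mathrm{amp}P$ can be large.

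Second, even when $x\cdot\mathrm{id}_P=0$ in $\mathrm{D}(R)$, you only get $\bar P\simeq P\oplus\Sigma P$ in $\mathrm{D}(R)$. But $P$ is not an $\bar R$-complex, and $\bar P$ need not decompose in $\mathrm{D}(\bar R)$. Take $R=k[[t]]$, $x=t^2$, $M=k$ and $P=(R\xrightarrow{t}R)$:
\begin{itemize}
\item Here $P\simeq k$, so $x\cdot\mathrm{id}_P=0$ in $\mathrm{D}(R)$.
\item Yet $\bar P=(\bar R\xrightarrow{t}\bar R)$ is perfect over $\bar R=k[t]/(t^2)$. A summand $Q$ with homology $k$ in one degree would then be perfect and quasi-isomorphic to a shift of $k$, which is impossible since $\mathrm{pd}_{\bar R}k=\infty$.
\item Nevertheless $\mathrm{qpd}_{\bar R}k=0=\mathrm{qpd}_Rk-1$ is witnessed by $\bar P$ itself. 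It has the same length as $P$; the drop comes purely from $\mathrm{hsup}$ going up.
\end{itemize}

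In fact, under hypothesis (2) the splitting you want is impossible whenever $P$ resolves a module $M$. By adjunction, $\mathrm{Hom}_{\mathrm{D}(\bar R)}(\bar P,\Sigma M)\cong\mathrm{Ext}^1_R(M,M)=0$. A decomposition $\bar P\simeq M\oplus\Sigma M$ would make this group contain $\mathrm{End}_{\bar R}(M)\neq 0$.
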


As a direct consequence of Theorem \ref{thm:1.3}, if $M$ is a nonzero, finitely generated module over a commutative noetherian local ring $R$ with $\mathrm{Ext}^{1}_R(M,M)=0$, then $\mathrm{qpd}_{R}M<\infty$ if and only if  {\it$\mathrm{qpd}_{R/(\textbf{x})}M<\infty$} (see Corollary \ref{lem4.004}). This offers a partial answer to Question 2.


The paper is organized as follows. In Section \ref{Preliminaries}, we fix some notations and recall some key definitions. In Section \ref{Quasi-projective-Properties}, we introduce the notion of quasi-projective dimension of complexes over rings and establish some basic properties for this dimension, including comparison with projective dimension and technical tools for resolutions. In particular, parts (1)-(3) of Theorem \ref{thm:1.1} are shown in this section. In Section \ref{AB-formula}, we prove a derived Auslander-Buchsbaum formula, that is, Theorem \ref{thm:1.1}(4), for complexes of finite quasi-projective dimension. In Section \ref{partial-ans-Q1}, we show Theorem \ref{thm:1.2}. In Section \ref{par-ans-Q2}, we discuss the behavior of quasi-projective dimension with respect to regular sequences and show Theorem \ref{thm:1.3}.

\bigskip
\section{\bf Preliminaries}\label{Preliminaries}
This section collects some notions and facts about complexes and commutative rings
for use throughout this paper.

\vspace{2mm}
1. {\bf Complexes}

\vspace{2mm}
We denote by $\mathrm{C}(R)$ the category whose class of objects consists of all
chain complexes $M$ of $R$-modules,
$$\cdots\longrightarrow M_{s+1}\stackrel{d_{s+1}^M}\longrightarrow M_s\stackrel{d_{s}^M}\longrightarrow M_{s-1}\longrightarrow\cdots$$such that $d_{s}^Md_{s+1}^M=0$ for all $s\in\mathbb{Z}$.
For $s\in\mathbb{Z}$, denote $$\mathrm{Z}_s(M)=\mathrm{Ker}d^M_{s},\ \mathrm{B}_{s}(M)=\mathrm{Im}d^M_{s+1},\ \mathrm{C}_{s}(M)=\mathrm{Coker}d^M_{s+1},\ \mathrm{H}_s(M)=\mathrm{Z}_s(M)/\mathrm{B}_s(M),$$ $$\mathrm{sup}M=\mathrm{sup}\{i\in\mathbb{Z}\hspace{0.03cm}|\hspace{0.03cm}M_i\neq0\},\ \mathrm{inf}M=\mathrm{inf}\{i\in\mathbb{Z}\hspace{0.03cm}|\hspace{0.03cm}M_i\neq0\},$$
$$\mathrm{hsup}M=\mathrm{sup}\{i\in\mathbb{Z}\hspace{0.03cm}|\hspace{0.03cm}\mathrm{H}_i(M)\neq0\},\
\mathrm{hinf}M=\mathrm{inf}\{i\in\mathbb{Z}\hspace{0.03cm}|\hspace{0.03cm}\mathrm{H}_i(M)\neq0\},$$
and $\mathrm{amp}(M)=\mathrm{hsup}M-\mathrm{hinf}M$. Let $M$ and $N$ be two $R$-complexes. The \emph{cone} of a chain map $f:M\rightarrow N$ is the complex with $\mathrm{cn}(f)_n=N_{n}\oplus M_{n-1}$ and
$d_n^{\mathrm{cn}(f)}(x,y)=(d_{n}^N(y)+f_{n-1}(x),-d_{n-1}^M(x))$ for all $n\in\mathbb{Z}$. A bounded complex of finitely generated projective $R$-modules is called \emph{perfect}. The class of perfect complexes is denoted by $\mathrm{perf}R$.

\vspace{2mm}
{\bf 1.1. Derived categories.} Denotes by $\mathrm{K}(R)$
the homotopy category of $R$-complexes and by $\mathrm{D}(R)$ the derived category of $R$-complexes. An isomorphism in $\mathrm{D}(R)$ is marked by $\simeq$. A quasi-isomorphism is marked by $\simeq$ next to the arrow.
 Write $M\otimes^\mathrm{L}_RN$ and  $\mathrm{RHom}_R(M,N)$
 for the derived functors of the tensor product functor and the homomorphism functor,
respectively. The existence of projective and injective resolutions ensure that derived
functors exist, and can be calculated with suitable resolutions of either factor. An $R$-complex $M$ is called \emph{minimal} if every isomorphism
$M\rightarrow M$ in $\mathrm{K}(R)$ is an isomorphism in $\mathrm{C}(R)$.

\vspace{2mm}
{\bf 1.2. Projective dimensions.} A morphism $\beta$ of complexes is a \emph{quasi-isomorphism} if
$\mathrm{H}(\beta)$ is bijective. An $R$-complex $P$ is called \emph{semi-projective} if $\mathrm{Hom}_R(P,\beta)$ is a
surjective quasi-isomorphism for every surjective quasi-isomorphism $\beta$ in $\mathrm{C}(R)$. A \emph{semi-projective resolution} of an $R$-complex $M$ is a
quasi-isomorphism $P\stackrel{\simeq}\rightarrow M$ of $R$-complexes with $P$ semi-projective. By \cite[Theorem 5.2.14]{CFH}, Every $R$-complex has a semi-projective resolution.
 The \emph{projective dimension} of an $R$-complex $M$, $\mathrm{pd}_RM$, is defined as
$$\mathrm{pd}_RM=\mathrm{inf}\{n\in\mathbb{Z}\hspace{0.03cm}|\hspace{0.03cm}\exists\ \textrm{a\ semi-projective\ resolution}\ P\stackrel{\simeq}\rightarrow M\ \textrm{with}\ P_v=0\ \textrm{for\ all}\ v>n\}.$$

{\bf 1.3. Thick subcategories.} A thick subcategory $\mathcal{T}$ of $\mathrm{D}(R)$ is a nonempty full subcategory such that
 $\mathcal{T}$ is closed under isomorphisms and direct summands in $\mathrm{D}(R)$, and
 if two of the objects in an exact triangle in $\mathrm{D}(R)$ are in $\mathcal{T}$, then so is the third.
Let $M$ be an $R$-complex.
The intersection of thick subcategories of $\mathrm{D}(R)$ containing
$M$ is a thick subcategory, the thick subcategory \emph{generated} by $M$, and denote
it by $\mathrm{Thick}_R[M]$. It is suggestive to think of complexes in $\mathrm{Thick}_R[M]$ as being \emph{finitely built} from $M$. In general,
$\mathrm{Thick}_R[M]\subseteq\mathrm{Thick}_R[\mathrm{H}(M)]$.

\vspace{2mm}
2. {\bf Commutative rings}

\vspace{2mm}
Let $R$ be a commutative ring and $\mathrm{Spec}R$ the set of
prime ideals of $R$. The \emph{support} of an $R$-complex $M$ is defined as $\mathrm{Supp}_RM:=\{\mathfrak{p}\in\textrm{Spec}R\hspace{0.03cm}|\hspace{0.03cm}M_\mathfrak{p}\not\simeq 0\}$.

\vspace{2mm}
{\bf 2.1. Koszul complexes.} The complex $K(x)= 0\rightarrow R\stackrel{x}\rightarrow
R\rightarrow 0$ for $x\in R$,
concentrated in degrees 1 and 0, is called the \emph{Koszul complex} of $x$. The Koszul complex $K(\textbf{\emph{x}})=K(x_1,\cdots,x_n)$ of a finite sequence $\textbf{\emph{x}}=x_1,\cdots,x_n$ in $R$
is the tensor product $K(x_1)\otimes_R\cdots\otimes_RK(x_n)$.  For $M\in\mathrm{D}(R)$, we set $K(\textbf{\emph{x}};M)=K(\textbf{\emph{x}})\otimes_RM$. Let $\mathfrak{a}=(x_1,\cdots,x_n)$. Also denoted $K(\textbf{\emph{x}};M)$ by $K(\mathfrak{a};M)$. For a local noetherian ring $(R,\mathfrak{m})$, denote
$K(\mathfrak{m};R)$ the Koszul complex of a minimal system of generators
of $\mathfrak{m}$.

\vspace{2mm}
{\bf 2.2. Depth and dimension.} Let $(R,\mathfrak{m},k)$ be a local noetherian ring. The \emph{depth} and  \emph{(Krull) dimension}
of an $R$-complex $M\in\mathrm{D}(R)$ are defined as follows
$$\text{\rm depth}_{R}M:=-\mathrm{hsupRHom}_{A}(k,M),$$
$$\mathrm{dim}_RM:= \sup\{\mathrm{dim}R/\mathfrak{p}-\mathrm{hinf}M_\mathfrak{p}\hspace{0.03cm}|\hspace{0.03cm}\mathfrak{p}\in\mathrm{Spec}R\}.$$
Note that for modules these notions agree with the usual ones.

 \vspace{2mm}
{\bf 2.3. Regular sequences.}  Like
most concepts for modules that of $M$-sequences can be
extended to complexes in several non-equivalent ways. Christensen \cite{C} explored
strongly regular sequences for complexes, which not only enjoys the localize properly, but also
agrees with the usual definition of $M$-regular
sequences for modules.

Let $M\in \mathrm{D}_\sqsubset(R)$. We say that $\mathfrak{p}\in\textrm{Spec}R$
is an \emph{associated prime} ideal for $M$, $\mathfrak{p}\in\mathrm{Ass}_RM$, if and only if $\mathrm{depth}_RM_\mathfrak{p}=-\mathrm{hsup}M<\infty$,
 The union of the associated
prime ideals forms the set $\mathrm{Z}_R(M)$ of zero-divisors for $M$.
An element $x\in R$ is said to be \emph{strongly regular} for $M$ if $x\not\in\mathrm{Z}_R(M)$. Let $\textbf{\emph{x}}=x_1,\cdots,x_n$ be a sequence in $R$. We say that
$\textbf{\emph{x}}$ is a \emph{strong $M$-regular sequence} if $x_j$ is strongly regular for
$K(x_1,\cdots,x_{j-1};M)$ for $j\in\{1,\cdots,n\}$, and $K(\textbf{\emph{x}};M)\not\simeq0$ or $M\simeq0$.
If $M$ is an $R$-module, then the definition of strong $M$-regular sequences agrees with the usual definition of $M$-regular sequences in \cite{BH}.

 Let $(R,\mathfrak{m},k)$ be a noetherian local ring and $\widehat{R}$ the $\mathfrak{m}$-adic completion of $R$.  By \cite[Theorem A.21]{BH}, $\widehat{R}\cong Q/I$, where $Q$ is a regular local ring and $I$ is an ideal of $Q$. The ring $R$ is called \emph{complete intersection} if $I$ is generated by a $Q$-regular sequence $(x_1,\cdots,x_n)$. If $n=1$, then $R$ is called a \emph{hypersurface}. Following \cite{T,T25}, $R$ is said to \emph{have an isolated singularity} if for every
nonmaximal prime ideal $\mathfrak{p}$ of $R$ the local ring $R_\mathfrak{p}$ is regular.
$R$ is called \emph{dominant} if $k\in \mathrm{Thick}_{R}[R,M]$ for each $M\in\mathrm{D}^\mathrm{f}_\Box(R)\backslash\mathrm{perf}R$.

\bigskip
\section{\bf Properties of quasi-projective dimension of complexes}\label{Quasi-projective-Properties}
Throughout this section, $R$ is an associated ring with identity. In this section, we introduce the definition of quasi-projective dimensions of complexes and discuss some properties of quasi-projective
dimension.

\begin{df}\label{lem:1.1}{\rm  A \emph{quasi-projective resolution} of $M\in\mathrm{D}(R)$ is a semi-projective complex $P$ so that there exist an integer $\ell$ and $a_i\geq0$ for $i\geq \ell$, not all
zero such that $$\mathrm{H}(P)\cong\bigoplus_{i=\ell}^{\infty}\mathrm{H}(\Sigma^{i}M^{a_i}).$$
We define the \emph{quasi-projective dimension} of $M$ by
$$\footnotesize\mathrm{qpd}_RM=\bigg\{\begin{aligned}&\mathrm{inf}\{\mathrm{sup}P-\mathrm{hsup}P\hspace{0.03cm}|\hspace{0.03cm}P\ \textrm{is\ a\ bounded\ above\ quasi-projective\ resolution\ of}\ M\} & (\textrm{if}\ M\not\simeq0)\\ & -\infty &  (\textrm{if}\ M\simeq0)\end{aligned}$$

Therefore, $\mathrm{qpd}_RM=\infty$ if and only if $M$ does not admit a bounded above
quasi-projective resolution.}
\end{df}

\begin{rem}\label{lem:1.0}{\rm  Let $M\not\simeq0$ be an $R$-complex in $\mathrm{D}(R)$.

(1) Let $P$ be a quasi-projective resolution of $M$. Then there exists a quasi-projective
resolution $P'$ of $M$ with $\mathrm{H}_i(P')=\mathrm{H}_i(P)$ for all $i\in \mathbb{Z}$, and $\mathrm{hinf}P'=\mathrm{inf}P'$. Indeed, if $\mathrm{hinf}M=-\infty$ then $\mathrm{hinf}P=-\infty$, and set $P'=P$. If $\mathrm{hinf}M>-\infty$, then $u:=\mathrm{hinf}P=\ell+\mathrm{hinf}M>-\infty$.
Set a  semi-projective resolution $P'$ of the truncated complex
$P_{\supset_{u}}:\cdots\rightarrow P_{u+2}\rightarrow P_{u+1}\rightarrow\mathrm{Ker}d_{u}\rightarrow0$ such that $\mathrm{inf}P'=\mathrm{hinf}P'=u$. Then $P'$ is the desired quasi-projective
resolution of $M$. Thus, if $\mathrm{qpd}_RM<\infty$ then $$\mathrm{H}(P')\cong\bigoplus_{i=\mathrm{hinf}P'-\mathrm{hinf}M}^{\mathrm{hsup}P'-\mathrm{hsup}M}\mathrm{H}(\Sigma^{i}M^{a_i}),$$  where $a_{j}>0$ for $j=\mathrm{hinf}P'-\mathrm{hinf}M$ and $j={\mathrm{hsup}P'-\mathrm{hsup}M}$.

 (2) Every semi-projective resolution of $M$ is a quasi-projective resolution. Then $\mathrm{qpd}_RM+\mathrm{hsup}M\leq\mathrm{pd}_RM$, and $\mathrm{qpd}_R\Sigma^sM=\mathrm{qpd}_RM$ for all $s\in\mathbb{Z}$.}
\end{rem}

The next lemma offers a criterion for quasi-projective dimension.

\begin{lem}\label{lem0.7} Let $0\not\simeq M\in\mathrm{D}(R)$ and $g\geq0$. The following are equivalent:

$(1)$ $\mathrm{qpd}_RM\leq g$;

$(2)$ There is a bounded above quasi-projective resolution $P$ of $M$ such that $\mathrm{pd}_R\mathrm{C}_{\mathrm{hsup}P}(P)\leq g$;

$(3)$ There exist $0\not\simeq \bar{M}\in\mathrm{D}_\sqsubset(R)$ and $\ell\in\mathbb{Z}$ such that $\mathrm{H}(\bar{M})
\cong\bigoplus_{i=\ell}^{\mathrm{hsup}\bar{M}-\mathrm{hsup}M}\mathrm{H}(\Sigma^{i}M^{a_i})$ and
$\mathrm{pd}_R\bar{M}\leq g+\mathrm{hsup}\bar{M}$.
\end{lem}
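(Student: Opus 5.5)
The plan is to prove the cycle $(1)\Rightarrow(2)\Rightarrow(3)\Rightarrow(1)$. The central tool is the standard fact for a bounded above semi-projective complex $P$ with $s:=\mathrm{hsup}P$. Consider the soft truncation
$$P_{\subset_s}:\ 0\rightarrow \mathrm{C}_s(P)\rightarrow P_{s-1}\rightarrow P_{s-2}\rightarrow\cdots .$$
Then $P\simeq P_{\subset_s}$, and for every $g\geq0$ we have $\mathrm{pd}_RP\leq g+s$ if and only if $\mathrm{pd}_R\mathrm{C}_s(P)\leq g$. The latter equivalence is the usual criterion (e.g.\ from \cite{CFH}) that $\mathrm{pd}_RX\leq n$ with $n\geq\mathrm{hsup}X$ exactly when $\mathrm{C}_n(P)$ is projective for a semi-projective resolution $P$ of $X$, applied at level $n=s+g$ after noting that the cokernel in degree $s+g$ of a projective resolution of $\mathrm{C}_s(P)$ spliced onto $P_{\leq s-1}$ computes this.

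\textbf{$(1)\Rightarrow(2)$.} Choose a bounded above quasi-projective resolution $P$ of $M$ with $\sup P-\mathrm{hsup}P\leq g$, and put $s=\mathrm{hsup}P$. Since $P_i=0$ for $i>s+g$, the complex $0\rightarrow P_{s+g}\rightarrow\cdots\rightarrow P_{s+1}\rightarrow P_s\rightarrow\mathrm{C}_s(P)\rightarrow0$ is exact. This holds because $\mathrm{H}_i(P)=0$ for $i>s$, and $P_s\to\mathrm{C}_s(P)$ is onto with kernel $\mathrm{B}_s$. It is therefore a projective resolution of $\mathrm{C}_s(P)$ of length at most $g$, so $\mathrm{pd}_R\mathrm{C}_s(P)\leq g$.

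\textbf{$(2)\Rightarrow(3)$.} Take $\bar M=P$. It is nonzero because not all $a_i$ vanish, and bounded above. Its homology has the required shape: any index $i$ with $a_i\neq0$ satisfies $i+\mathrm{hsup}M\leq\mathrm{hsup}P$, so the sum stops at $\mathrm{hsup}\bar M-\mathrm{hsup}M$. When $\mathrm{hsup}M=\infty$, I expect to need a separate convention check; since $P$ is bounded above, $M$ must then be homologically bounded above anyway. The bound $\mathrm{pd}_R\bar M\leq g+\mathrm{hsup}\bar M$ follows from the truncation fact.

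\textbf{$(3)\Rightarrow(1)$.} Let $t=\mathrm{hsup}\bar M$ and $n=\mathrm{pd}_R\bar M\leq g+t$. Take a semi-projective resolution $P\to\bar M$ with $P_v=0$ for $v>g+t$; this is possible since $g+t\geq n$, allowing zeros. Then $P$ is semi-projective with $\mathrm{H}(P)\cong\mathrm{H}(\bar M)$ of the prescribed form, so it is a bounded above quasi-projective resolution of $M$. Moreover $\sup P-\mathrm{hsup}P\leq g+t-t=g$, which gives $\mathrm{qpd}_RM\leq g$.

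The main obstacle is the truncation fact in the direction ``$\mathrm{pd}_R\mathrm{C}_s(P)\leq g\Rightarrow\mathrm{pd}_RP\leq g+s$'', which is needed in $(2)\Rightarrow(3)$. To settle it, I would replace $\mathrm{C}_s(P)$ by a projective resolution $Q$ of length at most $g$. I would then form the complex $P_{\leq s-1}$ spliced with $\Sigma^sQ$; this is the mapping cone of a chain map $\Sigma^{s-1}Q\to P_{\leq s-1}$ induced by $Q_0\to\mathrm{C}_s(P)\to P_{s-1}$. This complex is semi-projective, quasi-isomorphic to $P_{\subset_s}\simeq P$, and vanishes above degree $s+g$.
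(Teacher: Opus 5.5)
Your proposal is correct and follows the same cycle $(1)\Rightarrow(2)\Rightarrow(3)\Rightarrow(1)$ as the paper, with the same choice of $P$ in $(1)\Rightarrow(2)$ and the same argument for $(3)\Rightarrow(1)$. The differences are minor. In $(1)\Rightarrow(2)$, with $h=\mathrm{hsup}P$, you read off $\mathrm{pd}_R\mathrm{C}_h(P)\leq g$ directly from the exact complex $0\rightarrow P_{h+g}\rightarrow\cdots\rightarrow P_h\rightarrow\mathrm{C}_h(P)\rightarrow0$, whereas the paper uses the vanishing of $\mathrm{RHom}_R(\mathrm{C}_h(P),N)$ in high degrees. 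In $(2)\Rightarrow(3)$ you take $\bar{M}=P$, which is isomorphic in $\mathrm{D}(R)$ to the paper's soft truncation $P_{\subset_h}$, and your mapping-cone splicing proves the bound $\mathrm{pd}_R\bar{M}\leq g+\mathrm{hsup}\bar{M}$ that the paper simply asserts.
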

\begin{proof} (1) $\Rightarrow$ (2). By assumption, there is a bounded above quasi-projective resolution $P$ so that $\mathrm{qpd}_RM=\mathrm{sup}P-\mathrm{hsup}P\leq g$. Set $s=\mathrm{sup}P$ and $h=\mathrm{hsup}P$. For an $R$-module $N$,
\begin{center}$\begin{aligned}0=\mathrm{H}_{-(s+1)}(\mathrm{Hom}_R(P,N))
&\cong\mathrm{H}_{-(s-h+1)}(\Sigma^{h}\mathrm{Hom}_R(P_{\geq h},N))\\
&\cong\mathrm{H}_{-(s-h+1)}(\mathrm{RHom}_R(\mathrm{C}_{h}(P),N)),\end{aligned}$\end{center}it follows that $\mathrm{pd}_R\mathrm{C}_{h}(P)\leq s-h\leq g$.

 (2) $\Rightarrow$ (3). Assume that $P$ is a bounded above quasi-projective resolution of $M$ such that $\mathrm{pd}_R\mathrm{C}_{\mathrm{hsup}P}(P)\leq g$. Set $h=\mathrm{hsup}P$. Then $\bar{M}:=P_{\subset_{h}}:0\rightarrow\mathrm{C}_{h}(P)\stackrel{\bar{d}_h}\rightarrow P_{h-1}\stackrel{d_{h-1}}\rightarrow\cdots$ is the desired complex.

  (3) $\Rightarrow$ (1). As $\bar{M}\not\simeq0$, $a_i>0$ for some $i\geq \ell$. Let $P$ be a semi-projective resolution of $\bar{M}$ such that $P_v=0$ for $v>g+\mathrm{hsup}\bar{M}$ and $v<\mathrm{hinf}\bar{M}$.
Then $P$ is a quasi-projective resolution of $M$ and $\mathrm{qpd}_RM\leq\mathrm{sup}P-\mathrm{hsup}P\leq g+\mathrm{hsup}\bar{M}-\mathrm{hsup}\bar{M}=g$.
\end{proof}

The following is the main result in this section, which is proved for finitely generated $R$-modules over commutative noetherian rings by Gheibi, Jorgensen and Takahashi in \cite[Corollary 4.10]{GJT}. Recently, Chen, Chen and Liu \cite[Theorem 1.2]{CCL} generalized this result to any object in an abelian category with enough projectives.

\begin{thm}\label{lem0.6} Let $0\not\simeq M\in\mathrm{D}_\sqsupset(R)$.

$(1)$ If $\mathrm{pd}_RM<\infty$, then $\mathrm{qpd}_RM+\mathrm{hsup}M=\mathrm{pd}_RM$.

$(2)$ If $\mathrm{qpd}_RM<\infty$ and $\mathrm{Ext}^{j}_R(\mathrm{H}(M),\mathrm{H}(M))=0$ for all $j\geq2+\mathrm{amp}M$, then $\mathrm{pd}_RM<\infty$.\\In particular, one has an equality
$$\footnotesize{\mathrm{sup}\{\mathrm{pd}_RM-\mathrm{hsup}M\hspace{0.03cm}|\hspace{0.03cm}M\in\mathrm{D}_\Box(R)\ \textrm{with}\ \mathrm{pd}_RM<\infty\}
=\mathrm{sup}\{\mathrm{qpd}_RM\hspace{0.03cm}|\hspace{0.03cm}M\in\mathrm{D}_\Box(R)\ \textrm{with}\ \mathrm{qpd}_RM<\infty\}}.$$
\end{thm}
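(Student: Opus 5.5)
For (1), Remark \ref{lem:1.0}(2) already gives $\mathrm{qpd}_RM+\mathrm{hsup}M\leq\mathrm{pd}_RM$, so I only need the reverse inequality. Set $p=\mathrm{pd}_RM<\infty$. Take any bounded above quasi-projective resolution $P$ of $M$. By Remark \ref{lem:1.0}(1) we may assume $\mathrm{H}(P)\cong\bigoplus_{i=\ell}^{h-\mathrm{hsup}M}\mathrm{H}(\Sigma^iM^{a_i})$ with $h=\mathrm{hsup}P$ and $a_{h-\mathrm{hsup}M}>0$. Put $q=\mathrm{sup}P-h$; the goal is $p-\mathrm{hsup}M\leq q$.

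The natural tool is $\mathrm{Ext}$ into modules. By the argument in Lemma \ref{lem0.7}, $\mathrm{H}_{-n}\mathrm{RHom}_R(P,N)=0$ for $n>\mathrm{sup}P$ and every module $N$. The difficulty is that $P$ is not built from copies of $M$ in the derived category; only its homology is. I therefore plan to argue by induction along the (finite) homological amplitude of $P$, or via the hypercohomology spectral sequence $E_2^{p,q}=\mathrm{Ext}^p_R(\mathrm{H}_{-q}(P),N)\Rightarrow \mathrm{H}^{p+q}\mathrm{RHom}_R(P,N)$. First choose $N$ with $\mathrm{Ext}^{p}_R$ realized, i.e.\ with $\mathrm{H}_{-p}\mathrm{RHom}_R(M,N)\neq0$. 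Then in the spectral sequence for $\mathrm{RHom}(P,N)$, the top-degree corner coming from the top summand $\Sigma^{h-\mathrm{hsup}M}M$ in degree $p+h-\mathrm{hsup}M$ should survive, because $\mathrm{pd}M=p$ bounds every other term: all other contributions come from copies $\Sigma^iM$ with $i<h-\mathrm{hsup}M$, and these live in strictly lower total degree.

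The cleaner way to realize this is to work with the derived category directly. Since $\mathrm{pd}_RM<\infty$, each $\mathrm{H}(\Sigma^iM)$ has finite projective dimension, so $\mathrm{H}(P)$ does too. I would filter $P$ by soft truncations into triangles whose pieces are $\Sigma^j\mathrm{H}_j(P)$, and compare with $\bigoplus\Sigma^iM^{a_i}$ by counting the maximal $n$ such that $\mathrm{Ext}^n(-,N)\neq0$. Concretely, I will show $\mathrm{sup}\{n\mid \mathrm{H}_{-n}\mathrm{RHom}(X,N)\neq0 \text{ for some } N\}=\mathrm{pd}X$. I will then show that this quantity for $P$ equals $\max_i(i+\mathrm{pd}M)$ with $a_i>0$, which is $h-\mathrm{hsup}M+p$. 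This equality is the main obstacle, since cancellation in connecting maps must be excluded. I would handle it using that $\mathrm{pd}$ of the top truncation piece governs, and then choose $N$ to be a module where $\mathrm{Ext}^p$ of the relevant $\mathrm{C}$-module is nonzero, as in Lemma \ref{lem0.7}(2). Granting this, $\mathrm{sup}P\geq h-\mathrm{hsup}M+p$, i.e.\ $q\geq p-\mathrm{hsup}M$.

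For (2), take $\bar M$ as in Lemma \ref{lem0.7}(3), a complex of finite projective dimension whose homology is a finite sum of shifts of $\mathrm{H}(M)$. I would aim to show that $M$ (or $\mathrm{H}(M)$, up to the amplitude shift) is a direct summand of a complex built from $\bar M$. The key step is to show that a suitable morphism $\bar M\to\Sigma^{i}M$ inducing a split surjection on the top homology exists. The obstruction classes for lifting lie in $\mathrm{Ext}^j_R(\mathrm{H}(M),\mathrm{H}(M))$ with $j\geq2+\mathrm{amp}M$, because the homologies spread over a range of width $\mathrm{amp}M$ plus one. The vanishing hypothesis kills these, giving a splitting. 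This is the hardest part, and I would follow the strategy of \cite[Theorem 1.2]{CCL} adapted to complexes. It then gives $M\in\mathrm{Thick}$ of finite-pd complexes in the bounded setting, hence $\mathrm{pd}_RM<\infty$.

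The final equality follows from (1) and Remark \ref{lem:1.0}(2). The inequality $\leq$ is immediate from (1), since each $M$ with $\mathrm{pd}_RM<\infty$ has $\mathrm{qpd}_RM=\mathrm{pd}_RM-\mathrm{hsup}M$. For $\geq$, given $M$ with $\mathrm{qpd}_RM=g$, the complex $\bar M$ of Lemma \ref{lem0.7}(3) lies in $\mathrm{D}_\Box(R)$ and satisfies $\mathrm{pd}_R\bar M-\mathrm{hsup}\bar M\leq g$. Taking $\bar M$ from a resolution attaining $g$ gives equality, by (1) applied to $\bar M$ together with the minimality of the chosen $P$.
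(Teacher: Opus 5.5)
\textbf{Part (1).} The gap is the step you yourself flag as the main obstacle: identifying $\mathrm{pd}_RP$ with $\max\{i+\mathrm{pd}_RM\mid a_i>0\}$. It cannot be filled, because part (1) is false for complexes. The invariant $\mathrm{qpd}_RM$ depends only on the graded module $\mathrm{H}(M)$, whereas $\mathrm{pd}_RM$ does not.

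For a counterexample, let $R=k[[x,y]]$ and $M=k\oplus\Sigma R$ with zero differential. Then $\mathrm{pd}_RM=2$ and $\mathrm{hsup}M=1$. The perfect complex $P\colon 0\to R^2\xrightarrow{(x\ y)}R\to0$ of Remark \ref{lem:0.99}(2) has $\mathrm{H}_0(P)\cong k$ and $\mathrm{H}_1(P)\cong R$, i.e.\ $\mathrm{H}(P)\cong\mathrm{H}(M)$, and $\mathrm{sup}P=\mathrm{hsup}P=1$. Hence $\mathrm{qpd}_RM=0$ and $\mathrm{qpd}_RM+\mathrm{hsup}M=1<2=\mathrm{pd}_RM$. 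Since $\mathrm{depth}_RM=0$, the same $M$ also violates Theorem \ref{lem0.3}.

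In your spectral sequence for $\mathrm{RHom}_R(P,k)$ the cancellation you feared does occur. The differential $d_2\colon\mathrm{Hom}_R(\mathrm{H}_1(P),k)\to\mathrm{Ext}^2_R(\mathrm{H}_0(P),k)$ is an isomorphism $k\to k$, whereas for $M$ it vanishes. Your side claim that $\mathrm{pd}_RM<\infty$ forces the $\mathrm{H}_i(M)$ to have finite projective dimension is also false: take $K(\mathfrak{m};R)$ over a singular local ring. The example shows that even this claim would not rescue the argument.

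The paper's proof of (1) breaks at the same place. It asserts that the comparison map $f\colon P\to\bar P=\bigoplus\Sigma^iQ^{a_i}$, lifting the isomorphism on $\mathrm{H}_{\mathrm{hinf}P}$, is a homotopy equivalence. But $P$ and $\bar P$ merely have isomorphic homology. In the example, $\mathrm{H}_{-2}(\mathrm{Hom}_R(P,k))=0$ while $\mathrm{H}_{-2}(\mathrm{Hom}_R(\bar P,k))\cong\mathrm{Ext}^2_R(k,k)\neq0$.

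What does hold is the module case $\mathrm{amp}M=0$, where your truncation idea works. Let $p=\mathrm{pd}_RM\geq1$ and $h=\mathrm{hsup}P$. The complex $0\to\mathrm{B}_{h-1}(P)\to P_{h-1}\to\cdots$ has homology $M^{a_i}$ in degrees $i<h$, so $\mathrm{pd}_R\mathrm{B}_{h-1}(P)\leq p-1$. The sequence $0\to M^{a_h}\to\mathrm{C}_h(P)\to\mathrm{B}_{h-1}(P)\to0$ then gives $p=\mathrm{pd}_R\mathrm{C}_h(P)\leq\mathrm{sup}P-h$.

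\textbf{Part (2) and the displayed equality.} Set $\bar M=\bigoplus\Sigma^iM^{a_i}$. The obstructions to realizing $\mathrm{H}(P)\cong\mathrm{H}(\bar M)$ by a morphism $P\to\bar M$ in $\mathrm{D}(R)$ lie in $\mathrm{Ext}^r_R(\mathrm{H}_s(P),\mathrm{H}_{s+r-1}(\bar M))$ with $2\leq r\leq1+\mathrm{amp}P$; nothing else can occur for degree reasons. The hypothesis concerns only $r\geq2+\mathrm{amp}M$, so it says nothing about the range $2\leq r\leq1+\mathrm{amp}M$. Also, a map that is split surjective on top homology does not by itself make $M$ a direct summand of an object built from $\bar M$.

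The example above satisfies the hypotheses of (2), since $\mathrm{amp}M=1$ and $R$ has global dimension $2$. Yet no morphism $P\to M$ induces an isomorphism on $\mathrm{H}_1$; the obstruction is a nonzero class in $\mathrm{Ext}^2_R(k,R)$. So neither your splitting step nor the paper's argument works as written. The paper passes from vanishing for $j\geq2+\mathrm{amp}M$ to vanishing for all $j\geq2$, and then appeals to (1). Under the stronger hypothesis $\mathrm{Ext}^j_R(\mathrm{H}(M),\mathrm{H}(M))=0$ for all $j\geq2$, the spectral sequence does yield $P\simeq\bar M$, hence $\mathrm{pd}_RM<\infty$ directly.

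For the displayed equality, your direction ``$\geq$'' needs only minimality: a $P$ attaining $\mathrm{qpd}_RM$ has $\mathrm{pd}_RP=\mathrm{sup}P$, since any shorter semi-projective replacement would be a better quasi-projective resolution. The direction ``$\leq$'' cannot be taken from (1). It can be recovered as follows. Given $M$ with $p=\mathrm{pd}_RM>h=\mathrm{hsup}M$, let $Q$ be a bounded semi-projective resolution of $M$ with $\mathrm{sup}Q=p$. The module $\mathrm{C}_h(Q)$ has projective dimension $p-h$, and the module case then gives $\mathrm{qpd}_R\mathrm{C}_h(Q)=p-h$.
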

\begin{proof} (1) Assume that $\mathrm{qpd}_RM+\mathrm{hsup}M<\mathrm{pd}_RM:=n$. There is a semi-projective resolution
$Q\stackrel{\simeq}\rightarrow M$ so that $\mathrm{sup}Q=n$, $\mathrm{inf}Q=\mathrm{hinf}Q=\mathrm{hinf}M$ and $\mathrm{hinf}\mathrm{RHom}_R(M,N)=-n$ for some $R$-module $N$ by \cite[Theorem 8.1.8]{CFH}.
As $M\in\mathrm{D}_\sqsupset(R)$, by Remark \ref{lem:1.0}(1), there exists a bounded quasi-projective resolution $P$ of $M$, such that $\mathrm{inf}P=\mathrm{hinf}P$, $\mathrm{qpd}_RM=\mathrm{sup}P-\mathrm{hsup}P<n$ and $$\mathrm{H}(P)\cong\bigoplus_{i=\mathrm{hinf}P-\mathrm{hinf}M}^{\mathrm{hsup}P-\mathrm{hsup}M}\mathrm{H}(\Sigma^{i}M^{a_i}).$$   Set $j=-n+\mathrm{hsup}M-\mathrm{hsup}P$. As $\mathrm{sup}P<-j$, one has $\mathrm{hinfHom}_R(P,L)>j$ for any $R$-module $L$. Set $\bar{P}=\bigoplus_{i=\mathrm{hinf}P-\mathrm{hinf}M}^{\mathrm{hsup}P-\mathrm{hsup}M}\Sigma^{i}Q^{a_i}$. Then $\mathrm{inf}P=\mathrm{hinf}P=\mathrm{hinf}\bar{P}=\mathrm{inf}\bar{P}$ and
\begin{center}$\begin{aligned}\mathrm{hinfHom}_R(\bar{P},N)
&=\mathrm{hinfHom}_R(\bigoplus_{i=\mathrm{hinf}P-\mathrm{hinf}M}^{\mathrm{hsup}P-\mathrm{hsup}M}\Sigma^{i}Q^{a_i},N)\\
&=\mathrm{hinf}\Sigma^{\mathrm{hsup}M-\mathrm{hsup}P}\mathrm{RHom}_R(M^{a_{\mathrm{hsup}P-\mathrm{hsup}M}},N)=j.\end{aligned}$\end{center} As
$\mathrm{C}_{\mathrm{hinf}P}(\bar{P})\cong\mathrm{H}_{\mathrm{hinf}P}(\bar{P})\cong\mathrm{H}_{\mathrm{hinf}P}(P)\cong\mathrm{C}_{\mathrm{hinf}P}(P)$, it yields a commutative diagram:
\begin{center} $\xymatrix@C=18pt@R=16pt{
\cdots\ar[r]& P_{\mathrm{hinf}P+1} \ar[d]^{f_{\mathrm{hinf}P+1}}\ar[r]& P_{\mathrm{hinf}P}\ar[d]^{f_{\mathrm{hinf}P}}\ar[r] &\mathrm{H}_{\mathrm{hinf}P}(P)\ar[d]^\cong\ar[r]&0 \\
\cdots\ar[r]& \bar{P}_{\mathrm{hinf}P+1} \ar[r]& \bar{P}_{\mathrm{hinf}P}\ar[r] &\mathrm{H}_{\mathrm{hinf}P}(\bar{P})\ar[r]&0, }$
\end{center}so that $f=\{f_{i}\}_{i\geq\mathrm{hinf}P}$ is a homotopy equivalence. Then $\mathrm{H}_{p}(\mathrm{Hom}_R(P,N))\cong\mathrm{H}_{p}(\mathrm{Hom}_R(\bar{P},N))$ for $p\geq\mathrm{hinf}P$, so
 $\mathrm{H}_{j}(\mathrm{Hom}_R(P,N))\neq0$ a contradiction.
Thus $\mathrm{qpd}_RM+\mathrm{hsup}M=\mathrm{pd}_RM$.

(2) Choose a bounded  quasi-projective resolution $P$ so that $\mathrm{qpd}_RM=\mathrm{sup}P-\mathrm{hsup}P$ and $$\mathrm{H}(P)\cong\mathrm{H}(\bigoplus_{i=\mathrm{hinf}P-\mathrm{hinf}M}^{\mathrm{hsup}P-\mathrm{hsup}M}\Sigma^{i}M^{a_i}).$$ Set $\bar{M}=\bigoplus_{i=\mathrm{hinf}P-\mathrm{hinf}M}^{\mathrm{hsup}P-\mathrm{hsup}M}\Sigma^{i}M^{a_i}$, $\bar{M}':\cdots\rightarrow \bar{M}_{\mathrm{inf}P+1}\rightarrow\mathrm{Z}_{\mathrm{inf}P}(\bar{M})\rightarrow0$ and $\bar{M}'':0\rightarrow\mathrm{C}_{\mathrm{sup}P}(\bar{M})\rightarrow \bar{M}_{\mathrm{sup}P-1} \rightarrow\cdots\rightarrow \bar{M}_{\mathrm{inf}P+1}\rightarrow\mathrm{Z}_{\mathrm{inf}P}(\bar{M})\rightarrow0$. Then $\bar{M}'\stackrel{\simeq}\rightarrow \bar{M}$ and $\bar{M}'\stackrel{\simeq}\rightarrow \bar{M}''$. By \cite[Theorem 11.34]{R},
there is a third quadrant spectral sequence $$E^{p,q}_2=\prod_{s\in\mathbb{Z}}\mathrm{Ext}^{p}_R(\mathrm{H}_s(\Sigma^{-\mathrm{inf}P}P),\mathrm{H}_{s-q}(\Sigma^{-\mathrm{sup}P}\bar{M}''))
\Longrightarrow\mathrm{H}^{p+q}(\mathrm{Hom}_R(\Sigma^{-\mathrm{inf}P}P,\Sigma^{-\mathrm{sup}P}\bar{M}'')).$$
As $\mathrm{Ext}^{j}_R(\mathrm{H}(M),\mathrm{H}(M))=0$ for all $j\geq2+\mathrm{amp}M$, it follows that $\mathrm{Ext}^{j}_R(\mathrm{H}_{s}(P),\mathrm{H}_{s-q}(M''))=0$ for $s,q\in\mathbb{Z}$ and $j\geq2$. Thus \cite[P349, Remark]{R} yields the next exact sequence  $$0\rightarrow\prod_{s\in\mathbb{Z}}\mathrm{Ext}^{1}_R(\mathrm{H}_{s}(P),\mathrm{H}_{s+1}(\bar{M}''))
\rightarrow\mathrm{H}_{0}(\mathrm{Hom}_R(P,\bar{M}''))\rightarrow
\prod_{s\in\mathbb{Z}}\mathrm{Hom}_R(\mathrm{H}_{s}(P),\mathrm{H}_{s}(\bar{M}''))\rightarrow0.$$
 So there exists a chain map $f:P\rightarrow \bar{M}''$ such that $\mathrm{H}_s(f)$ is an isomorphism for $s\in\mathbb{Z}$. Then $P$ is a semi-projective resolution of $\bar{M}$, and hence $\mathrm{pd}_RM=\mathrm{qpd}_RM+\mathrm{hsup}M<\infty$ by \cite[Proposition 8.1.11]{CFH} and (1).

 The last claim is an immediate consequence of Lemma \ref{lem0.7}.
\end{proof}

Let $A$ be an $R$-module. The flat dimension and FP-injective dimension of $A$ are denoted by $\mathrm{fd}_RA$ or $\mathrm{FP}\textrm{-}\mathrm{id}_RA$, respectively. The next corollary  is used to further discuss the relationship between quasi-projective dimension and projective dimension.

\begin{cor}\label{lem0.01} Let $P$ be a semi-projective $R$-complex such that $\mathrm{H}(P)\cong\mathrm{H}(M)$ for some $R$-complex $M$.

$(1)$ If $R$ is left coherent and $P$ is perfect such that either $\mathrm{fd}_R\mathrm{H}_t(P)\leq1$ or $\mathrm{FP}\textrm{-}\mathrm{id}_R\mathrm{H}_t(P)\leq1$ for all $t\in\mathbb{Z}$, then $P\stackrel{\simeq}\rightarrow M$.

 $(2)$  If $R$ is von Neumann regular, then $P\stackrel{\simeq}\rightarrow M$.\\In particular, in either case, $\mathrm{qpd}_RM<\infty$ if and only if $\mathrm{pd}_RM<\infty$ for $M\in\mathrm{D}(R)$.
\end{cor}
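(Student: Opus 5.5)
We will construct a chain map $P\to M$ that induces the given isomorphism on homology, following the method of the proof of Theorem \ref{lem0.6}(2). Since $P$ is semi-projective, $\mathrm{Hom}_R(P,M)$ computes $\mathrm{RHom}_R(P,M)$, and
$$\mathrm{H}_0(\mathrm{Hom}_R(P,M))=\mathrm{Hom}_{\mathrm{D}(R)}(P,M).$$
Any chain map $f\colon P\to M$ realizing the fixed isomorphism $\mathrm{H}(P)\cong\mathrm{H}(M)$ degreewise is a quasi-isomorphism, hence a semi-projective resolution. So it suffices to show that the natural map
$$\mathrm{Hom}_{\mathrm{D}(R)}(P,M)\to\prod_{s}\mathrm{Hom}_R(\mathrm{H}_s(P),\mathrm{H}_s(M))$$
is surjective.

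To analyse this map we use the spectral sequence of \cite[Theorem 11.34]{R} already used in Theorem \ref{lem0.6}(2):
$$E_2^{p,q}=\prod_s\mathrm{Ext}^p_R(\mathrm{H}_s(P),\mathrm{H}_{s-q}(M))\Longrightarrow \mathrm{H}^{p+q}(\mathrm{Hom}_R(P,M)).$$
If $\mathrm{Ext}^p_R(\mathrm{H}(P),\mathrm{H}(M))=0$ for all $p\geq2$, only the columns $p=0,1$ survive. The remark \cite[P349]{R} then gives the short exact sequence
$$0\to\prod_s\mathrm{Ext}^1_R(\mathrm{H}_s(P),\mathrm{H}_{s+1}(M))\to\mathrm{H}_0(\mathrm{Hom}_R(P,M))\to\prod_s\mathrm{Hom}_R(\mathrm{H}_s(P),\mathrm{H}_s(M))\to0,$$
which is exactly the required surjectivity. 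Convergence needs care. In case (1), $P$ is perfect, so $\mathrm{H}(P)$ is bounded and the spectral sequence has only finitely many nonzero rows. In case (2) we instead appeal to the unbounded form given in \cite{R}, or reduce to truncations. The main work is therefore the vanishing of $\mathrm{Ext}^{\geq2}_R(\mathrm{H}_s(P),\mathrm{H}_t(M))$, where $\mathrm{H}_t(M)\cong\mathrm{H}_t(P)$.

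\emph{Case (2).} Over a von Neumann regular ring every module is flat, so all finitely presented modules are projective. Moreover every ideal is a direct limit of finitely generated (hence direct-summand) ideals, so every left ideal has projective dimension at most $1$. Baer's criterion for $\mathrm{Ext}^2$ then shows that every module has injective dimension at most $1$, hence $\mathrm{Ext}^{\geq2}_R=0$ identically. This case is direct.

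\emph{Case (1).} $R$ is left coherent and $P$ is perfect, so each $\mathrm{H}_s(P)$ is finitely presented: it is a homology module of a bounded complex of finitely generated projectives over a coherent ring. Coherence also gives each $\mathrm{H}_s(P)$ a resolution by finitely generated projectives.
\begin{itemize}
\item If $\mathrm{fd}_R\mathrm{H}_s(P)\leq1$, then, since the syzygies are finitely presented and flat-and-finitely-presented means projective, we get $\mathrm{pd}\,\mathrm{H}_s(P)\leq1$, and $\mathrm{Ext}^{\geq2}(\mathrm{H}_s(P),-)=0$.
\item If $\mathrm{FP}\text{-}\mathrm{id}_R\mathrm{H}_t(P)\leq1$, then $\mathrm{Ext}^{\geq2}_R(F,\mathrm{H}_t(P))=0$ for every finitely presented $F$ over a coherent ring; in particular this holds for $F=\mathrm{H}_s(P)$.
\end{itemize}

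The obstacle is that the hypothesis is stated pointwise in $t$, with either condition allowed for each index. We need vanishing for every pair $(s,t)$, so in the mixed situation $s\neq t$ we must check that one of the two conditions applies to the pair. We handle this by noting that if $\mathrm{H}_s(P)$ has $\mathrm{fd}\leq1$ the first bullet covers all $t$, and otherwise $\mathrm{H}_s(P)$ is still finitely presented, so the FP-injective condition on $\mathrm{H}_t$ handles it. If some $\mathrm{H}_t$ satisfies neither condition, the pair would fail, but the hypothesis excludes exactly this. Thus every pair is covered.

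\emph{The final claim.} If $\mathrm{qpd}_RM<\infty$, take a bounded above quasi-projective resolution $P$ with $\mathrm{H}(P)\cong\mathrm{H}(\bar M)$, where $\bar M=\bigoplus\Sigma^iM^{a_i}$. The above (for case (1) we first arrange the hypotheses on $\bar M$) yields $P\simeq\bar M$, so $\mathrm{pd}_R\bar M<\infty$. Since $M$ is a direct summand of a shift of $\bar M$, it follows that $\mathrm{pd}_RM<\infty$. The converse is Remark \ref{lem:1.0}(2).
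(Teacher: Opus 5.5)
\paragraph{The gap in case (2).} The gap is in case (2): you need $\mathrm{Ext}^{\geq2}_R=0$, which says exactly that $R$ is hereditary. Von Neumann regular rings need not be hereditary; a countably infinite product of fields has global dimension at least $2$ (Pierce, Osofsky). Neither of your inferences is valid:
\begin{enumerate}
\item[(a)] A direct limit of projectives is guaranteed to have projective dimension $\leq1$ only when the system is countable.
\item[(b)] Even if every left ideal $I$ had $\mathrm{pd}_RI\leq1$, Baer's criterion would only give injective dimension $\leq2$, since $\mathrm{Ext}^2_R(R/I,N)\cong\mathrm{Ext}^1_R(I,N)$.
\end{enumerate}

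No other argument can close this gap either, because (2) fails for every non-hereditary regular ring. Take $0\neq\xi\in\mathrm{Ext}^2_R(A,B)=\mathrm{Hom}_{\mathrm{D}(R)}(A,\Sigma^2B)$, let $M$ sit in the triangle $\Sigma B\to M\to A\xrightarrow{\xi}\Sigma^2B$, and let $P$ be a semi-projective resolution of $A\oplus\Sigma B$.
\begin{enumerate}
\item[(i)] $\mathrm{H}(P)\cong\mathrm{H}(M)$, namely $A$ in degree $0$ and $B$ in degree $1$.
\item[(ii)] A quasi-isomorphism $P\to M$ would give $M\simeq A\oplus\Sigma B$.
\item[(iii)] The triangle above is the functorial truncation triangle $\tau_{\geq1}M\to M\to\tau_{\leq0}M\to\Sigma\tau_{\geq1}M$, so such an isomorphism would carry $\xi$ to the zero connecting map. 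This forces $\xi=0$, a contradiction.
\end{enumerate}
So (2) needs an extra hypothesis such as $R$ hereditary (e.g.\ $R$ countable), and under it your route works.

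The paper argues differently. It writes $P=\varinjlim P_\lambda$ with $P_\lambda$ perfect \cite[Theorem 1.1]{CH}, uses $\mathrm{H}_0(\mathrm{Hom}_R(P_\lambda,M))\cong\prod_s\mathrm{Hom}_R(\mathrm{H}_s(P_\lambda),\mathrm{H}_s(M))$ for perfect complexes over a regular ring \cite[Theorem 1.3]{HLP}, and passes to the limit. The same obstruction is hidden in its claim that $\mathrm{H}_0(\varprojlim\mathrm{Hom}_R(P_\lambda,M))\to\varprojlim\mathrm{H}_0(\mathrm{Hom}_R(P_\lambda,M))$ is surjective. The Milnor sequence gives this for countable towers, but for general directed systems higher derived limits need not vanish. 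Indeed, $\mathrm{Ext}^2_R(A,B)\cong{\varprojlim}^2\,\mathrm{Hom}_R(A_\lambda,B)$ when $A=\varinjlim A_\lambda$ with the $A_\lambda$ finitely generated projective.

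\paragraph{Case (1).} Your case (1) is essentially the paper's argument. It uses the same spectral sequence as Theorem~\ref{lem0.6}(2), with $\mathrm{Ext}^{\geq2}_R(\mathrm{H}_s(P),\mathrm{H}_t(M))$ killed in one of two ways:
\begin{enumerate}
\item[(a)] In the first variable: you use that a finitely presented module of flat dimension $\leq1$ over a left coherent ring has projective dimension $\leq1$. The paper uses $\mathrm{Ext}^p_R(F,N)^+\cong\mathrm{Tor}^R_p(N^+,F)$ for finitely presented $F$, to the same effect.
\item[(b)] In the second variable, via FP-injective dimension.
\end{enumerate}
The paper reads the hypothesis uniformly: $\mathrm{fd}_R\mathrm{H}_t(P)\leq1$ for all $t$, or $\mathrm{FP}\textrm{-}\mathrm{id}_R\mathrm{H}_t(P)\leq1$ for all $t$. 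Under that reading your two bullets suffice.

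Your paragraph on the mixed, pointwise reading is wrong, however. Consider a pair $(s,t)$ with $\mathrm{fd}_R\mathrm{H}_s(P)>1$ and $\mathrm{FP}\textrm{-}\mathrm{id}_R\mathrm{H}_t(P)>1$. This is allowed pointwise, with each module satisfying the other bound, yet neither bullet covers it. State the uniform hypothesis and delete that paragraph.
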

\begin{proof} (1) As $R$ is left coherent, each $\mathrm{H}_s(P)$ is finitely presented. If $\mathrm{fd}_R\mathrm{H}_t(P)\leq1$ for all $t\in\mathbb{Z}$, then $\mathrm{Ext}^{p}_{R}(\mathrm{H}_{q}(\Sigma^{-\mathrm{inf}P}P),\mathrm{H}_{s-q}(\Sigma^{-\mathrm{sup}P}M))^+
\cong\mathrm{Tor}^R_{p}(\mathrm{H}_{s-q}((\Sigma^{-\mathrm{sup}P}M)^+),\mathrm{H}_q(\Sigma^{-\mathrm{inf}P}P))=0$ for $p\geq2$. If $\mathrm{FP}\textrm{-}\mathrm{id}_R\mathrm{H}_{t}(M)\leq1$ for all $t\in\mathbb{Z}$, then $\mathrm{Ext}^{p}_{R}(\mathrm{H}_{s}(\Sigma^{-\mathrm{inf}P}P),\mathrm{H}_{s-q}(\Sigma^{-\mathrm{sup}P}M))=0$ for $p\geq2$. Thus  there is a quasi-isomorphism $P\stackrel{\simeq}\rightarrow M$ by Theorem \ref{lem0.6}(2).

(2) Let $P$ be a semi-projective complex. By \cite[Theorem 1.1]{CH}, $P\cong\underrightarrow{\textrm{lim}}P_\lambda$, where each $P_\lambda$ is a bounded complexes of finitely generated free $R$-modules,
it induces an inverse system $\{\mathrm{Hom}_R(P_\lambda,M)\}$. Thus one has a commutative diagram with exact rows:
\begin{center} $\xymatrix@C=18pt@R=16pt{
 & \mathrm{H}_0(\underleftarrow{\textrm{lim}}\mathrm{Hom}_R(P_\lambda,M)) \ar[d]\ar[r]& \mathrm{H}_0(\prod\mathrm{Hom}_R(P_\lambda,M))\ar[d]^\cong\ar[r] &\mathrm{H}_0(\prod\mathrm{Hom}_R(P_\lambda,M))\ar[d]^\cong \\
0\ar[r]& \underleftarrow{\textrm{lim}}\mathrm{H}_0(\mathrm{Hom}_R(P_\lambda,M)) \ar[r]& \prod\mathrm{H}_0(\mathrm{Hom}_R(P_\lambda,M))\ar[r] &\prod\mathrm{H}_0(\mathrm{Hom}_R(P_\lambda,M)) }$
\end{center}so $\mathrm{H}_0(\underleftarrow{\textrm{lim}}\mathrm{Hom}_R(P_\lambda,M))\rightarrow
\underleftarrow{\textrm{lim}}\mathrm{H}_0(\mathrm{Hom}_R(P_\lambda,M))$ is surjective. Since each $P_\lambda$ is perfect, it follows from \cite[Theorem 1.3]{HLP} that $\mathrm{H}_0(\mathrm{Hom}_R(P_\lambda,M))\stackrel{\cong}\rightarrow
\prod_{s\in\mathbb{Z}}\mathrm{Hom}_R(\mathrm{H}_s(P_\lambda),\mathrm{H}_s(M))$. Also $\mathrm{H}_0(\mathrm{Hom}_R(P,M))\cong\mathrm{H}_0(\underleftarrow{\textrm{lim}}\mathrm{Hom}_R(P_\lambda,M))$ and \begin{center}$\begin{aligned}\underleftarrow{\textrm{lim}}\mathrm{H}_0(\mathrm{Hom}_R(P_\lambda,M))
&\cong\prod_{s\in\mathbb{Z}}\underleftarrow{\textrm{lim}}\mathrm{Hom}_R(\mathrm{H}_s(P_\lambda),\mathrm{H}_s(M))\\
&\cong\prod_{s\in\mathbb{Z}}\mathrm{Hom}_R(\underrightarrow{\textrm{lim}}\mathrm{H}_s(P_\lambda),\mathrm{H}_s(M))\\
&\cong\prod_{s\in\mathbb{Z}}\mathrm{Hom}_R(\mathrm{H}_s(P),\mathrm{H}_s(M)),\end{aligned}$\end{center}it means that  $\mathrm{H}_0(\mathrm{Hom}_R(P,M))\rightarrow\prod_{s\in\mathbb{Z}}\mathrm{Hom}_R(\mathrm{H}_s(P),\mathrm{H}_s(M))$ is surjective. Thus
there is a chain map $f:P\rightarrow M$ such that $\mathrm{H}_s(f)$ is an isomorphism for all $s\in\mathbb{Z}$.
\end{proof}

\begin{prop}\label{lem0.14} Let $0\not\simeq M,N\in\mathrm{D}(R)$. There exists an inequality
$$\mathrm{qpd}_R(M\oplus N)\leq\mathrm{max}\{\mathrm{qpd}_RM,\mathrm{qpd}_RN\}.$$
\end{prop}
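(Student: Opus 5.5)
If either $\mathrm{qpd}_RM$ or $\mathrm{qpd}_RN$ is infinite there is nothing to prove. So we may assume both are finite and pick bounded above quasi-projective resolutions $P$ of $M$ and $Q$ of $N$ realizing the infima:
$$\mathrm{qpd}_RM=\mathrm{sup}P-\mathrm{hsup}P,\qquad \mathrm{qpd}_RN=\mathrm{sup}Q-\mathrm{hsup}Q,$$
with $\mathrm{H}(P)\cong\bigoplus_i\mathrm{H}(\Sigma^iM^{a_i})$ and $\mathrm{H}(Q)\cong\bigoplus_j\mathrm{H}(\Sigma^jN^{b_j})$. The idea is to "cross-multiply" the two resolutions so that $M$ and $N$ occur with the same multiplicities in every degree.

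\emph{Finiteness of the multiplicities.} Since $P$ is bounded above and $M\not\simeq0$, we have $\mathrm{hsup}M<\infty$. Indeed, a nonzero $\mathrm{H}_n(M)$ contributes $\mathrm{H}_n(M)^{a_i}$ to $\mathrm{H}_{n+i}(P)$, so if $\mathrm{hsup}M=\infty$ then $\mathrm{H}(P)$ would be unbounded above. The same degree argument forces $a_i=0$ for $i>\mathrm{hsup}P-\mathrm{hsup}M$. Together with Remark \ref{lem:1.0}(1), this means only finitely many $a_i$ are nonzero, and the largest such index is $i_{\max}=\mathrm{hsup}P-\mathrm{hsup}M$. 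Likewise only finitely many $b_j$ are nonzero, with top index $j_{\max}=\mathrm{hsup}Q-\mathrm{hsup}N$. This bookkeeping is the step I expect needs the most care, in particular the claim that the top multiplicity is indeed attained at these indices.

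\emph{The construction.} Put
$$X=\bigoplus_j\Sigma^jP^{b_j}\ \oplus\ \bigoplus_i\Sigma^iQ^{a_i}.$$
This is a finite direct sum of shifts of bounded above semi-projective complexes, so $X$ is semi-projective and bounded above. Its homology is
$$\mathrm{H}(X)\cong\bigoplus_{i,j}\mathrm{H}(\Sigma^{i+j}M^{a_ib_j})\oplus\bigoplus_{i,j}\mathrm{H}(\Sigma^{i+j}N^{a_ib_j})\cong\bigoplus_k\mathrm{H}\big(\Sigma^k(M\oplus N)^{c_k}\big),\qquad c_k=\sum_{i+j=k}a_ib_j.$$
Here $c_k\neq0$ for some $k$, since some $a_i$ and some $b_j$ are nonzero. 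Hence $X$ is a bounded above quasi-projective resolution of $M\oplus N$.

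\emph{Estimating the dimension.} We compute
$$\mathrm{hsup}X=\mathrm{max}\{\mathrm{hsup}P+j_{\max},\ \mathrm{hsup}Q+i_{\max}\}=\mathrm{hsup}P+\mathrm{hsup}Q-\mathrm{min}\{\mathrm{hsup}M,\mathrm{hsup}N\},$$
$$\mathrm{sup}X\leq\mathrm{max}\{\mathrm{sup}P+\mathrm{hsup}Q-\mathrm{hsup}N,\ \mathrm{sup}Q+\mathrm{hsup}P-\mathrm{hsup}M\}.$$
Subtracting term by term gives
$$\mathrm{sup}X-\mathrm{hsup}X\leq\mathrm{max}\big\{\mathrm{qpd}_RM+\mathrm{min}\{\mathrm{hsup}M,\mathrm{hsup}N\}-\mathrm{hsup}N,\ \mathrm{qpd}_RN+\mathrm{min}\{\mathrm{hsup}M,\mathrm{hsup}N\}-\mathrm{hsup}M\big\}.$$
Each correction term $\mathrm{min}\{\mathrm{hsup}M,\mathrm{hsup}N\}-\mathrm{hsup}N$ and $\mathrm{min}\{\mathrm{hsup}M,\mathrm{hsup}N\}-\mathrm{hsup}M$ is $\leq0$. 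Therefore $\mathrm{sup}X-\mathrm{hsup}X\leq\mathrm{max}\{\mathrm{qpd}_RM,\mathrm{qpd}_RN\}$, and so $\mathrm{qpd}_R(M\oplus N)\leq\mathrm{sup}X-\mathrm{hsup}X$ gives the desired inequality.
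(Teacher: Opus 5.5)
Your proposal is correct and follows essentially the same route as the paper: it builds the same cross-multiplied resolution $\bigoplus_j\Sigma^jP^{b_j}\oplus\bigoplus_i\Sigma^iQ^{a_i}$ and makes the same $\mathrm{sup}/\mathrm{hsup}$ estimate. The only difference is that you keep $\mathrm{min}\{\mathrm{hsup}M,\mathrm{hsup}N\}$ symmetric, whereas the paper assumes without loss of generality that $\mathrm{hsup}M\geq\mathrm{hsup}N$; this is purely cosmetic.
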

\begin{proof} We may assume that $\mathrm{max}\{\mathrm{qpd}_RM,\mathrm{qpd}_RN\}<\infty$ and $\mathrm{hsup}M\geq\mathrm{hsup}N$. Let $P$ and $Q$
 be bounded above quasi-projective resolutions of $M$ and $N$, respectively, such that $\mathrm{qpd}_RM=\mathrm{sup}P-\mathrm{hsup}P$ and $\mathrm{qpd}_RN=\mathrm{sup}Q-\mathrm{hsup}Q$. Then
 $$\mathrm{H}(P)\cong\bigoplus_{i=\ell_M}^{\mathrm{hsup}P-\mathrm{hsup}M}\mathrm{H}(\Sigma^{i}M^{a_i}),\
\mathrm{H}(Q)\cong\bigoplus_{j=\ell_N}^{\mathrm{hsup}Q-\mathrm{hsup}N}\mathrm{H}(\Sigma^{j}N^{b_j})$$
 for $\ell_M,\ell_N\in\mathbb{Z}$ and $a_i, b_j\geq 0$.
 Set $$F=(\bigoplus_{j=\ell_N}^{\mathrm{hsup}Q-\mathrm{hsup}N}\Sigma^jP^{b_j})
 \oplus(\bigoplus_{i=\ell_M}^{\mathrm{hsup}P-\mathrm{hsup}M}\Sigma^{i}Q^{a_i}).$$
  Then $F$ is a quasi-projective resolution for $M\oplus N$ and for $k\in\mathbb{Z}$, $$\mathrm{H}_k(F)=\bigoplus_{i=\ell_M}^{\mathrm{hsup}P-\mathrm{hsup}M}
  \bigoplus_{j=\ell_N}^{\mathrm{hsup}Q-\mathrm{hsup}N}\mathrm{H}_{k-i-j}(M\oplus N)^{a_ib_j}.$$
Note that $\mathrm{sup}F=\mathrm{max}\{\mathrm{sup}P+\mathrm{hsup}Q-\mathrm{hsup}N,\mathrm{sup}Q+\mathrm{hsup}P-\mathrm{hsup}M\}\leq
\mathrm{max}\{\mathrm{sup}P+\mathrm{hsup}Q-\mathrm{hsup}N,\mathrm{sup}Q+\mathrm{hsup}P-\mathrm{hsup}N\}$ and $\mathrm{hsup}F=\mathrm{max}\{\mathrm{hsup}P+\mathrm{hsup}Q-\mathrm{hsup}M,\mathrm{hsup}P+\mathrm{hsup}Q-\mathrm{hsup}N\}=
\mathrm{hsup}P+\mathrm{hsup}Q-\mathrm{hsup}N$, we have
 $\mathrm{qpd}_R(M\oplus N)\leq\mathrm{sup}F-\mathrm{hsup}F\leq\mathrm{max}\{\mathrm{sup}P-\mathrm{hsup}P,\mathrm{sup}Q-\mathrm{hsup}Q\}
 =\mathrm{max}\{\mathrm{qpd}_RM,\mathrm{qpd}_RN\}$.
\end{proof}

\begin{cor}\label{lem4.22} $(1)$ Let $0\not\simeq M\in\mathrm{D}_\Box(R)$. Then $\mathrm{qpd}_RM\leq\mathrm{sup}\{\mathrm{qpd}_R\mathrm{H}_s(M)\hspace{0.03cm}|\hspace{0.03cm}s\in\mathbb{Z}\}$.

$(2)$ Evert finitely generated $R$-module has finite quasi-projective dimension if and only if every complex in $\mathrm{D}^\mathrm{f}_\Box(R)$ has finite quasi-projective dimension.
\end{cor}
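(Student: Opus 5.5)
Set $g:=\mathrm{sup}\{\mathrm{qpd}_R\mathrm{H}_s(M)\mid s\in\mathbb{Z}\}$. For (1) we may assume $g<\infty$. Since $M\in\mathrm{D}_\Box(R)$, only finitely many $\mathrm{H}_s(M)$ are nonzero, say for $s\in S$.

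\emph{Step 1: pass to the graded homology.} Put $N:=\bigoplus_{s\in S}\Sigma^{s}\mathrm{H}_s(M)$, a bounded complex with zero differentials and $\mathrm{H}(N)\cong\mathrm{H}(M)$ degreewise. By Remark \ref{lem:1.0}(2), $\mathrm{qpd}_R\Sigma^s\mathrm{H}_s(M)=\mathrm{qpd}_R\mathrm{H}_s(M)$. Applying Proposition \ref{lem0.14} repeatedly (finitely many summands, all nonzero) gives
$$\mathrm{qpd}_RN\leq\mathrm{max}\{\mathrm{qpd}_R\mathrm{H}_s(M)\mid s\in S\}\leq g.$$

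\emph{Step 2: transfer from $N$ to $M$.} The key observation is that Definition \ref{lem:1.1} depends on $M$ only through $\mathrm{H}(M)$, that is, through the graded modules $\mathrm{H}(\Sigma^iM^{a_i})$. Since $\mathrm{H}(\Sigma^iN^{a_i})\cong\mathrm{H}(\Sigma^iM^{a_i})$ for all $i$, a semi-projective complex $P$ is a quasi-projective resolution of $N$ exactly when it is one of $M$, with the same numbers $\mathrm{sup}P-\mathrm{hsup}P$. Hence $\mathrm{qpd}_RM=\mathrm{qpd}_RN\leq g$, which proves (1).

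\emph{Step 3: part (2).} The direction ``$\Leftarrow$'' is trivial, since modules are complexes in $\mathrm{D}^\mathrm{f}_\Box(R)$. For ``$\Rightarrow$'', take $0\not\simeq M\in\mathrm{D}^\mathrm{f}_\Box(R)$. Its homologies are finitely many finitely generated modules, each of finite quasi-projective dimension by hypothesis, so the supremum in (1) is a maximum of finitely many finite numbers. Part (1) then gives $\mathrm{qpd}_RM<\infty$. If $M\simeq0$, then $\mathrm{qpd}_RM=-\infty$ by definition.

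There is no real obstacle. The only point needing care is Step 2, namely checking that the definition genuinely depends on the complex only through its homology.
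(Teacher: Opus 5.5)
Your proposal is correct and follows essentially the same route as the paper. The paper also observes that $\mathrm{qpd}_RM=\mathrm{qpd}_R\mathrm{H}(M)$ with $\mathrm{H}(M)=\bigoplus_s\Sigma^s\mathrm{H}_s(M)$, applies Proposition \ref{lem0.14}, and deduces (2) directly from (1). Your version just spells out the shift invariance and the finite iteration that the paper leaves implicit.
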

\begin{proof} (1) As $\mathrm{H}(M)=\bigoplus_{s\in\mathbb{Z}}\Sigma^s\mathrm{H}_s(M)$ and $\mathrm{qpd}_RM=\mathrm{qpd}_R\mathrm{H}(M)$, it follows from Proposition \ref{lem0.14} that the inequality holds true.

The statement (2) is an immediate consequence of (1).
\end{proof}

\begin{rem}\label{lem:0.99}{\rm (1) Let $M\in\mathrm{D}(R)$. Then $\mathrm{qpd}_R(\Sigma^sM\oplus\Sigma^tM)\leq\mathrm{qpd}_RM$ for $s,t\in\mathbb{Z}$ by Proposition \ref{lem0.14}. Conversely,  let $P$ be a quasi-projective resolution of $\Sigma^sM\oplus\Sigma^tM$. Then $P$ is a quasi-projective resolution of $M$, so $\mathrm{qpd}_RM=\mathrm{qpd}_R(\Sigma^sM\oplus\Sigma^tM)$ for $s,t\in\mathbb{Z}$.

(2) Let $R=k[[x,y]]$ with $k$ a field
and
$M:0\rightarrow R^2\stackrel{(x,y)}\rightarrow
R\rightarrow 0$.
Then $\mathrm{H}_0(M)=k$ and $\mathrm{H}_1(M)=R$, so
$$\mathrm{qpd}_RM=0<\mathrm{sup}\{\mathrm{qpd}_R\mathrm{H}_s(M)\hspace{0.03cm}|\hspace{0.03cm}s\in\mathbb{Z}\}=2.$$ Thus the inequality in Corollary \ref{lem4.22}(1) may be strict for $M\in\mathrm{D}_\Box(R)$.}
\end{rem}
\bigskip

\section{\bf The derived Auslander-Buchsbaum formula}\label{AB-formula}
In this section, a derived Auslander-Buchsbaum formula is established for complexes of finite quasi-projective dimension over local noetherian rings. This result extends the classical Auslander-Buchsbaum formula by replacing projective dimension with quasi-projective dimension and yields several consequences concerning depth and homological bounds. We begin with the following lemmas.

\begin{lem}\label{lem0.0} Let $R$ be left coherent, and let $\mathrm{D}_\Box^\mathrm{f.p}(R)$ denote the full subcategory of $\mathrm{D}(R)$ consisting of $R$-complexes with bounded and finitely presented homology. If $0\not\simeq M\in\mathrm{D}^\mathrm{f.p.}_\Box(R)$  with $\mathrm{qpd}_RM<\infty$, then there is a
perfect complex $P$ that is a quasi-projective resolution of $M$ such that $\mathrm{qpd}_RM =\mathrm{sup}P-\mathrm{hsup}P$. Moreover, if $R$ is local noetherian then one can choose $P$ to be minimal.
\end{lem}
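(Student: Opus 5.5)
Start from a bounded above quasi-projective resolution that realizes $\mathrm{qpd}_RM$, then replace it by a perfect complex with the same homology and the same $\mathrm{sup}$.

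First, the setup. Since $\mathrm{qpd}_RM<\infty$, choose a bounded above quasi-projective resolution $P$ of $M$ with $\mathrm{qpd}_RM=\mathrm{sup}P-\mathrm{hsup}P=:g$. Because $M\in\mathrm{D}_\Box(R)$, Remark \ref{lem:1.0}(1) lets us assume $\mathrm{inf}P=\mathrm{hinf}P$. Moreover $\mathrm{H}(P)$ is a finite direct sum of shifted copies of $\mathrm{H}(M)$, and the range of indices $i$ is finite by the display in that remark. So $P$ is a bounded complex of projectives, and every homology module $\mathrm{H}_n(P)$ is finitely presented.

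Next, pass to a perfect complex. Set $h=\mathrm{hsup}P$. By Lemma \ref{lem0.7}, $\mathrm{pd}_R\mathrm{C}_h(P)\le g$, and $P$ is quasi-isomorphic to the bounded complex
\[
\bar M:=P_{\subset_h}:\ 0\to \mathrm{C}_h(P)\to P_{h-1}\to\cdots\to P_{\mathrm{inf}P}\to 0 .
\]
Since $R$ is left coherent and $\bar M$ is bounded with finitely presented homology, $\bar M$ has a semi-projective resolution $F$ by finitely generated free modules in each degree, concentrated in degrees $\ge\mathrm{hinf}\bar M$. The standard construction for coherent rings builds it degree by degree from the bottom, using that kernels of maps between finitely presented modules are finitely presented.

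Now truncate $F$ at degree $g+h$, replacing $F_{g+h}$ by $\mathrm{C}_{g+h}(F)$ and dropping everything above. Since $\mathrm{pd}_R\bar M\le g+h$ and $\mathrm{hsup}\bar M=h$, this cokernel is projective by the same Ext computation as in the proof of Lemma \ref{lem0.7} ((1)$\Rightarrow$(2)); this is essentially \cite[Theorem 8.1.8]{CFH}. It is also finitely presented, hence finitely generated projective. The truncated complex $P'$ is therefore perfect, $P'\simeq\bar M$, $\mathrm{H}(P')\cong\mathrm{H}(P)$, $\mathrm{hsup}P'=h$ and $\mathrm{sup}P'\le g+h$. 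Thus $P'$ is a quasi-projective resolution of $M$ with $\mathrm{sup}P'-\mathrm{hsup}P'\le g$. Equality holds because $g=\mathrm{qpd}_RM$ is the infimum over all such resolutions.

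For the local case, write the perfect complex $P'$ as $P''\oplus T$ in $\mathrm{C}(R)$, where $P''$ is minimal (its differential satisfies $d(P'')\subseteq\mathfrak m P''$) and $T$ is contractible. This splitting is the usual one for bounded complexes of finitely generated free modules over a local ring. Then $P''\simeq P'$, so it has the same homology, and $\mathrm{sup}P''\le\mathrm{sup}P'$. Hence $P''$ also attains $\mathrm{qpd}_RM$. Minimality in the sense of 1.1 follows from Nakayama's lemma: a homotopy equivalence of minimal complexes reduces modulo $\mathfrak m$ to an isomorphism, so it is an isomorphism.

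The main obstacle is the truncation step: we must show that the cokernel in degree $g+h$ is finitely generated projective. This needs both finite presentation (from coherence) and the bound $\mathrm{pd}_R\bar M\le g+h$ (from Lemma \ref{lem0.7}).
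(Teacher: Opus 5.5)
Your proof is correct and follows essentially the same route as the paper. Both start from a bounded quasi-projective resolution realizing $\mathrm{qpd}_RM$, use coherence to resolve it by finitely generated projectives, truncate at degree $\mathrm{sup}P$, and in the local case split off a contractible summand to get a minimal complex. The differences are cosmetic: you detour through $\bar M=P_{\subset_h}$, which is unnecessary since $\mathrm{pd}_RP\leq\mathrm{sup}P$ already, and you get projectivity of the top cokernel from the criterion in \cite[Theorem 8.1.8]{CFH}, whereas the paper reads it off from the contractible cone of the comparison map $F\to Q$.
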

\begin{proof} Let $Q$ be a bounded quasi-projective resolution of $M$ with $\mathrm{qpd}_RM =\mathrm{sup}Q-\mathrm{hsup}Q$. Then each $\mathrm{H}_i(Q)$ is finitely presented, so there is a quasi-isomorphism $\alpha:F\rightarrow Q$, where $F$ is a complex of finitely generated projective
$R$-modules with $\mathrm{inf}F=\mathrm{inf}Q$. Note that $\mathrm{cn}(\alpha)$ is contractible, so
each $\mathrm{Z}_i(\mathrm{cn}(\alpha))$ and $\mathrm{B}_i(\mathrm{cn}(\alpha))$ are projective. Set $s=\mathrm{sup}Q$. As $\mathrm{cn}(\alpha)_{i+1}=F_i$ for $i > s$, $F_s/\mathrm{B}_s(F)\cong\mathrm{B}_s(\mathrm{cn}(\alpha))$ is projective, so
$$P:\cdots\rightarrow0\longrightarrow F_s/\mathrm{B}_s(F)\stackrel{\bar{d}_s}\longrightarrow F_{s-1}\stackrel{d_{s-1}}\longrightarrow\cdots\stackrel{d_{\mathrm{inf}Q+1}}\longrightarrow F_{\mathrm{inf}Q}\longrightarrow0\rightarrow\cdots$$
is a perfect complex which is quasi-isomorphic to $Q$. Therefore, $P$ is a
quasi-projective resolution of $M$ and $\mathrm{qpd}_RM =\mathrm{sup}P-\mathrm{hsup}P$.

If $R$ is local noetherian, then $P=P'\oplus P''$ such that $P'$ and $P''$ are complexes of finitely generated free $R$-modules with $P'$ minimal and $P''$ contractible by \cite[Proposition 1.1.2(iv)]{A}. Thus $\mathrm{H}_i(P)\cong\mathrm{H}_i(P')$ for all $i\in\mathbb{Z}$. Then
$P'$ is a minimal quasi-projective resolution of $M$, and $\mathrm{qpd}_RM\leq\mathrm{sup}P'-\mathrm{hsup}P'\leq\mathrm{sup}P-\mathrm{hsup}P=\mathrm{qpd}_RM$, as desired.
\end{proof}

\begin{lem}\label{lem0.1} Let $x$ be an element of a commutative ring $R$ which is strongly regular on both $R$ and $M\in\mathrm{D}^\mathrm{f}_\Box(R)$.
 If $P$ is a quasi-projective resolution
of $M$, then $K(x;P)$ is a quasi-projective resolution of $K(x;M)$ over $R/(x)$.
Moreover, $\mathrm{qpd}_{R/(x)}K(x;M)\leq\mathrm{qpd}_RM$.
\end{lem}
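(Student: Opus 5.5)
The plan is to compute the homology of $K(x;P)=K(x)\otimes_RP$ directly and show that it is a direct sum of shifted copies of $\mathrm{H}(K(x;M))$. Write $\mathrm{H}(P)\cong\bigoplus_{i\ge\ell}\mathrm{H}(\Sigma^iM^{a_i})$. First, $K(x;P)$ is a semi-projective complex over $R/(x)$: since $x$ is regular on $R$, we have $K(x)\simeq R/(x)$. Hence $K(x)\otimes_RP\simeq R/(x)\otimes_RP$, and the latter is semi-projective over $R/(x)$ because base change preserves semi-projectivity. So we replace $K(x;P)$ by $P/xP$, which is quasi-isomorphic to it. If $P$ is bounded above, then so is $P/xP$, with $\sup(P/xP)\le\sup P$.

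The key step is the homology computation. The triangle $P\xrightarrow{x}P\to K(x;P)\to$ gives short exact sequences
\[
0\to \mathrm{H}_n(P)/x\mathrm{H}_n(P)\to \mathrm{H}_n(K(x;P))\to (0:_{\mathrm{H}_{n-1}(P)}x)\to 0 .
\]
The corresponding sequences for $M$ give $\mathrm{H}(K(x;M))$. If the extensions were split (or natural in a compatible way) we would be done, but in general they are not. So instead of working with homology directly, I would argue at the derived level: realise the isomorphism $\mathrm{H}(P)\cong\bigoplus \mathrm{H}(\Sigma^iM^{a_i})$ as an isomorphism of graded $R$-modules. Then note that the homology of $K(x;-)$ of a complex is determined by the homology \emph{as a module over the graded ring} only up to extension, which is the main obstacle.

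To overcome this obstacle, I would use the hypothesis that $x$ is strongly regular on $M$. Since $M\in\mathrm{D}^\mathrm f_\Box(R)$, strong regularity means $x\notin\mathrm{Ass}$ of the top homology. Hence multiplication by $x$ is injective on $\mathrm{H}_{\mathrm{hsup}M}(M)$, and inductively this should allow reduction to the case where $x$ is a nonzerodivisor on each $\mathrm{H}_n(M)$. I would first try to show exactly that: strongly regular on $M$ together with regular on $R$ implies $(0:_{\mathrm{H}_n(M)}x)=0$ for all $n$. If that holds, then $(0:_{\mathrm{H}_n(P)}x)=0$ as well, since $\mathrm{H}_n(P)$ is a direct sum of copies of the modules $\mathrm{H}_j(M)$. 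Then $\mathrm{H}_n(K(x;P))\cong \mathrm{H}_n(P)/x\mathrm{H}_n(P)$, and likewise for $M$. It follows that $\mathrm{H}(K(x;P))\cong\bigoplus_i\mathrm{H}(\Sigma^iK(x;M)^{a_i})$, and the $a_i$ are not all zero because $K(x;M)\not\simeq0$. If strong regularity only gives injectivity on the top homology, I would instead fall back on the triangle and compare via the long exact sequence degree by degree.

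Finally, for the inequality, take $P$ bounded above with $\mathrm{qpd}_RM=\sup P-\mathrm{hsup}P$. We have $\sup K(x;P)\le\sup P+1$. In addition $\mathrm{hsup}K(x;P)=\mathrm{hsup}P+1$ if the top homology contributes through $(0:x)$, or $\mathrm{hsup}P$ otherwise. To handle this uniformly, I would use the model $P/xP$, which gives $\sup\le\sup P$. Moreover, $\mathrm{hsup}(P/xP)\ge\mathrm{hsup}P$ by Nakayama-type reasoning, or more simply because $\mathrm{H}_{\mathrm{hsup}P}(P)/x\ne0$ when the homology is finitely generated over a local ring. In the non-local case I would instead use Lemma \ref{lem0.7}(2): the cokernel $\mathrm{C}_{h}(P)$ has $\mathrm{pd}_R\le g$, and $\mathrm{C}_h(P)/x$ has $\mathrm{pd}_{R/(x)}\le g$ since $x$ is regular on it. This yields $\mathrm{qpd}_{R/(x)}K(x;M)\le\mathrm{qpd}_RM$.
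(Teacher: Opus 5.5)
Your route is the paper's route: show that $x$ is a nonzerodivisor on every $\mathrm{H}_n(M)$, hence on every $\mathrm{H}_n(P)$. Then $\mathrm{H}_n(P/xP)\cong\mathrm{H}_n(P)/x\mathrm{H}_n(P)$, and the decomposition of $\mathrm{H}(P)$ descends to $K(x;M)$. The gap is exactly the step you flag, and it cannot be closed. Strong regularity does \emph{not} give $(0:_{\mathrm{H}_n(M)}x)=0$ for all $n$; the paper's proof asserts this implication in its first sentence without justification. For noetherian $R$, a prime $\mathfrak p$ lies in $\mathrm{Ass}_RM$ iff $\mathfrak p\in\mathrm{Ass}_R\mathrm{H}_s(M)$ with $s=\mathrm{hsup}M_{\mathfrak p}$. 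So associated primes of lower homology that sit inside the support of higher homology are invisible to $\mathrm{Z}_R(M)$. Concretely, take $R=k[[x,y]]$ and the complex $M=(0\to R^2\xrightarrow{(x\ y)}R\to0)$ of Remark \ref{lem:0.99}(2), with $\mathrm{H}_1(M)\cong R$ and $\mathrm{H}_0(M)=k$. Since $\mathrm{H}_1(M)_{\mathfrak p}\neq0$ for every $\mathfrak p$, we get $\mathrm{Ass}_RM=\mathrm{Ass}_RR=\{0\}$. Thus $x$ is strongly regular on $R$ and on $M$, yet $x\mathrm{H}_0(M)=0$.

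In fact the first assertion of the lemma fails for this $M$, so your ``degree by degree'' fallback cannot work either. The complex $P=\Sigma R\oplus K(x,y)$ has $\mathrm{H}(P)\cong\mathrm{H}(M)$, so it is a quasi-projective resolution of $M$. Put $S=R/(x)\cong k[[y]]$. Then $\mathrm{H}(K(x;P))$ is $k$ in degree $0$ and $S\oplus k$ in degree $1$. On the other hand, $K(x;M)\simeq M/xM$ has $\mathrm{H}_0=k$ and $\mathrm{H}_1\cong S$, the nonsplit extension $0\to S\xrightarrow{y}S\to k\to0$. Since $\mathrm{H}_n(\bigoplus_i\Sigma^iK(x;M)^{b_i})\cong k^{b_n}\oplus S^{b_{n-1}}$, degree $0$ forces $b_0=1$ and degree $1$ forces $b_1=1$. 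Degree $2$ then gives $S=0$, a contradiction.

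Your argument for the inequality inherits the gap. Both your local version and your Lemma \ref{lem0.7}(2) version need $P/xP$ to be a quasi-projective resolution of $K(x;M)$. In this example the inequality happens to hold ($0\le0$, using $P=M$), but neither your argument nor the paper's proves it under the stated hypothesis.

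What your argument does prove is the lemma with ``strongly regular on $M$'' replaced by ``a nonzerodivisor on every $\mathrm{H}_n(M)$''. Under that hypothesis, $\mathrm{H}_n(P/xP)\cong\bigoplus_i\mathrm{H}_{n-i}(K(x;M))^{a_i}$ and $\sup(P/xP)\le\sup P$. In the local case, $\mathrm{hsup}(P/xP)=\mathrm{hsup}P$ by Nakayama. Your use of the model $P/xP$, which is genuinely semi-projective over $R/(x)$, is more careful than the paper's use of $K(x;P)$ itself.
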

\begin{proof} As $x\not\in\mathrm{Z}_R(M)$, $x$ is not a zero-divisors of $\mathrm{H}_i(M)$ for all $i\in\mathbb{Z}$. So the exact triangle $M\stackrel{x}\rightarrow M\rightarrow K(x;M)\rightarrow\Sigma M$ yields an exact sequence  $$0\rightarrow\bigoplus_i\mathrm{H}_{j-i}(M)\stackrel{x}\rightarrow\bigoplus_i\mathrm{H}_{j-i}(M)\rightarrow
\bigoplus_{i}\mathrm{H}_{j-i}(M)^{a_i}/x\mathrm{H}_{j-i}(M)^{a_i}\rightarrow0.$$ Let $P$ be a quasi-projective resolution
of $M$ such that $$\mathrm{H}(P)\cong\bigoplus_{i=\mathrm{hinf}P-\mathrm{hinf}M}^{\mathrm{hsup}P-\mathrm{hsup}M}\mathrm{H}(\Sigma^{i}M^{a_i}).$$
Note that $x$ is $\mathrm{H}_j(P)$-regular, the exact triangle $P\stackrel{x}\rightarrow P\rightarrow K(x;P)\rightarrow\Sigma P$ implies that
 $$0\rightarrow\mathrm{H}_j(P)\stackrel{x}\rightarrow\mathrm{H}_j(P)\rightarrow\mathrm{H}_j(K(x;P))\rightarrow0$$ is exact.
Thus for $j\in\mathbb{Z}$, we have a commutative diagram:\begin{center} $\xymatrix@C=15pt@R=16pt{
0\ar[r] & \mathrm{H}_j(P) \ar[d]^\cong\ar[r]^{x}& \mathrm{H}_j(P)\ar[d]^\cong\ar[r] &\mathrm{H}_j(K(x;P))\ar[d]\ar[r]& 0\\
0\ar[r]& \bigoplus_{i}\mathrm{H}_{j-i}(M)^{a_i}\ar[r]^{x}& \bigoplus_{i}\mathrm{H}_{j-i}(M)^{a_i}\ar[r] &\bigoplus_{i}\mathrm{H}_{j-i}(M)^{a_i}/x\mathrm{H}_{j-i}(M)^{a_i}\ar[r]&0 }$
\end{center}it implies that $\mathrm{H}_j(K(x;P))
\cong\bigoplus_{i}\mathrm{H}_{j-i}(M)^{a_i}\otimes_RR/(x)
\cong\mathrm{H}_j(P)\otimes_RR/(x)$, so $K(x;P)$ is a quasi-projective resolution
of $K(x;M)$. Note that $\mathrm{sup}K(x;P)\leq\mathrm{sup}P$ and $\mathrm{hsup}K(x;P)\geq\mathrm{hsup}P$, it follows that $\mathrm{qpd}_{R/(x)}K(x;M)\leq\mathrm{qpd}_RM$.
\end{proof}

\begin{lem}\label{lem0.2} Let $(R,\mathfrak{m},k)$ be a local noetherian ring and $0\not\simeq M\in\mathrm{D}^\mathrm{f}_\Box(R)$ with $\mathrm{qpd}_RM<\infty$. One has an inequality $$\mathrm{depth}_RM+\mathrm{hsup}M\leq\mathrm{depth}R.$$
\end{lem}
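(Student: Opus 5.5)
The plan is to induct on $\mathrm{depth}R$. The base case will come from the Auslander--Buchsbaum formula for perfect complexes, applied to a minimal perfect quasi-projective resolution. The inductive step will cut down by an element that is strongly regular on both $R$ and $M$, using Lemma \ref{lem0.1}.

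Throughout I use the standard facts about depth of complexes (Foxby, Iyengar, Christensen) for $X\in\mathrm{D}^\mathrm{f}_\Box(R)$ with $X\not\simeq0$:
\begin{itemize}
\item[(a)] $\mathrm{depth}_RX\geq-\mathrm{hsup}X$, with equality if and only if $\mathrm{depth}_R\mathrm{H}_{\mathrm{hsup}X}(X)=0$, i.e. if and only if $\mathfrak{m}\in\mathrm{Ass}_RX$.
\item[(b)] If $X$ is perfect, then $\mathrm{depth}_RX=\mathrm{depth}R-\mathrm{pd}_RX$. If moreover $X$ is minimal, then $\mathrm{pd}_RX=\mathrm{sup}X$.
\end{itemize}

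\emph{Base case $\mathrm{depth}R=0$.} By Lemma \ref{lem0.0}, choose a minimal perfect quasi-projective resolution $P$ of $M$ with
$$\mathrm{H}(P)\cong\bigoplus_{i=\ell}^{t}\mathrm{H}(\Sigma^iM^{a_i}),\qquad t=\mathrm{hsup}P-\mathrm{hsup}M,\quad a_t>0.$$
By (b), $\mathrm{depth}_RP=-\mathrm{sup}P$. By (a), $\mathrm{depth}_RP\geq-\mathrm{hsup}P$, so $\mathrm{sup}P\leq\mathrm{hsup}P$. Since trivially $\mathrm{hsup}P\leq\mathrm{sup}P$, the two agree and equality holds in (a) for $P$. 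Hence $\mathrm{depth}_R\mathrm{H}_{\mathrm{hsup}P}(P)=0$.

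Only the summand with $i=t$ contributes in degree $\mathrm{hsup}P$, so $\mathrm{H}_{\mathrm{hsup}P}(P)\cong\mathrm{H}_{\mathrm{hsup}M}(M)^{a_t}$. Therefore $\mathrm{depth}_R\mathrm{H}_{\mathrm{hsup}M}(M)=0$, and (a) gives $\mathrm{depth}_RM=-\mathrm{hsup}M$. This is the desired inequality with equality.

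\emph{Inductive step $\mathrm{depth}R>0$.} If $\mathrm{depth}_RM+\mathrm{hsup}M\leq0$ there is nothing to prove, so assume it is positive. By (a), $\mathfrak{m}\notin\mathrm{Ass}_RM$; also $\mathfrak{m}\notin\mathrm{Ass}R$. Since $\mathrm{Ass}_RM$ and $\mathrm{Ass}R$ are finite sets of primes, prime avoidance gives $x\in\mathfrak{m}$ lying in none of them. Such an $x$ is strongly regular on both $R$ and $M$.

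By Lemma \ref{lem0.1}, $\mathrm{qpd}_{R/(x)}K(x;M)<\infty$. The element $x$ is a nonzerodivisor on every $\mathrm{H}_i(M)$, so $\mathrm{H}_i(K(x;M))\cong\mathrm{H}_i(M)/x\mathrm{H}_i(M)$. By Nakayama, $\mathrm{hsup}K(x;M)=\mathrm{hsup}M$ and in particular $K(x;M)\not\simeq0$. Moreover $K(x;M)\in\mathrm{D}^\mathrm{f}_\Box(R/(x))$.

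The standard behaviour of depth along a strongly regular element (Christensen \cite{C}) gives
$$\mathrm{depth}_{R/(x)}K(x;M)=\mathrm{depth}_RK(x;M)=\mathrm{depth}_RM-1,\qquad \mathrm{depth}R/(x)=\mathrm{depth}R-1.$$
The induction hypothesis applied over $R/(x)$ then yields
$$\mathrm{depth}_RM-1+\mathrm{hsup}M\leq\mathrm{depth}R-1,$$
which is the claim.

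I expect the main obstacle to be the base case. Depth is not determined by homology, so one cannot compare $M$ and $P$ directly. The argument above gets around this only because $\mathrm{depth}R=0$ forces $\mathrm{sup}P=\mathrm{hsup}P$, which pins down the top homology of $P$ as copies of $\mathrm{H}_{\mathrm{hsup}M}(M)$. A secondary point to verify carefully is that $\mathrm{depth}$ drops by exactly one under $K(x;-)$ for a strongly regular $x$, and that this depth is the same whether computed over $R$ or over $R/(x)$.
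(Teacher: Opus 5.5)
Your proposal is correct and follows essentially the paper's route: cut down by elements strongly regular on $R$ and $M$ via Lemma \ref{lem0.1} until $\mathrm{depth}R=0$, then use a minimal perfect quasi-projective resolution from Lemma \ref{lem0.0} to force $\mathrm{depth}_R\mathrm{H}_{\mathrm{hsup}M}(M)=0$. The paper gets this base case from the socle computation $\mathrm{Hom}_R(k,d_s)=0$ rather than from Auslander--Buchsbaum for perfect complexes, and it picks all $t$ elements at once; your one-element induction is cleaner, because it re-chooses $x$ relative to the new ring and complex at each stage.

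One small correction: strong regularity does not make $x$ a nonzerodivisor on every $\mathrm{H}_i(M)$. For example, take $M=R\oplus\Sigma^{-1}k$ over a discrete valuation ring: then $\mathrm{Ass}_RM=\{(0)\}$, yet any $x\in\mathfrak{m}$ kills $\mathrm{H}_{-1}(M)=k$. However, $x$ is a nonzerodivisor on $\mathrm{H}_{\mathrm{hsup}M}(M)$, since $\mathrm{Ass}_R\mathrm{H}_{\mathrm{hsup}M}(M)\subseteq\mathrm{Ass}_RM$, and that is all you need for $\mathrm{hsup}K(x;M)=\mathrm{hsup}M$. The equality $\mathrm{depth}_RK(x;M)=\mathrm{depth}_RM-1$ holds for every $x\in\mathfrak{m}$.
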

\begin{proof}
 Suppose that $0<t:=\mathrm{depth}R<\mathrm{depth}_RM+\mathrm{hsup}M=:u$. Set $W_0(M)=
 \{\mathfrak{p}\in\mathrm{Supp}_RM\hspace{0.02cm}|\hspace{0.02cm}\mathrm{dim}_RM\leq\mathrm{dim}R/\mathfrak{p}+\mathrm{sup}M_\mathfrak{p}\}$. As
 the sets $\mathrm{Ass}R$, $\mathrm{Ass}_RM$ and $W_0(M)$ are finite by \cite[Theorem 2.3 and Proposition 2.6]{C02} and
$0< t< u$, we can choose a sequence
$\textbf{\emph{x}}=x_1,\cdots,x_t$ in $\mathfrak{m}\backslash \cup_{\mathrm{Ass}R\cup\mathrm{Ass}_RM\cup W_0(M)}\mathfrak{p}$, then $x$ is strong regular on $R$ and $M$. Replacing $R$ and $M$ with $R/(\textbf{\emph{x}})$ and $K(\textbf{\emph{x}};M)$, we may assume $t=0$ by Lemma \ref{lem0.1}. Applying Lemma \ref{lem0.0}, one can choose a minimal quasi-projective resolution $F$ of finitely generated projective $R$-modules. Let $s=\mathrm{sup}F$. Applying the functor $\mathrm{Hom}_R(k,-)$ to the exact sequence
$0\rightarrow \mathrm{H}_s(F)\rightarrow F_s\stackrel{d_s}\rightarrow
F_{s-1}$ we get an exact sequence
$$0\rightarrow\mathrm{Hom}_R(k,\mathrm{H}_s(F))\rightarrow\mathrm{Hom}_R(k, F_s)\xrightarrow{\mathrm{Hom}(k,d_s)}
\mathrm{Hom}_R(k,F_{s-1}).$$
Note that $\mathrm{Hom}(k,d_s)=0$, so $\mathrm{Hom}_R(k,\mathrm{H}_s(F))\cong\mathrm{Hom}_R(k,F_s)$.
If $\mathrm{H}_{\mathrm{hsup}M}(\mathrm{RHom}_R(k,M))\cong\mathrm{Hom}_R(k,\mathrm{H}_{\mathrm{hsup}M}(M))\neq0$, then $\mathrm{depth}_RM=-\mathrm{hsup}M$ and so $u=0$, a contradiction.
As $\mathrm{H}_s(F)=0$ or is a direct sum of copies of $\mathrm{H}_{\mathrm{hsup}M}(M)$, in either case we get $\mathrm{Hom}_R(k,\mathrm{H}_s(F))=0$,
and hence $\mathrm{Hom}_R(k,F_s)=0$. So $t\neq0$, a contradiction.
\end{proof}

The next theorem proves the Auslander-Buchsbaum formula for complexes of finite quasi-projective dimension, which extends the module case of  \cite[Theorem 4.4]{GJT}.

\begin{thm}\label{lem0.3} Let $(R,\mathfrak{m},k)$ be a local noetherian ring and $0\not\simeq M\in\mathrm{D}^\mathrm{f}_\Box(R)$ with $\mathrm{qpd}_RM<\infty$. One has
the following equality $$\mathrm{qpd}_RM+\mathrm{hsup}M=\mathrm{depth}R-\mathrm{depth}_RM.$$
\end{thm}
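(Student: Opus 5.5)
We will prove the formula by reducing to the classical Auslander--Buchsbaum formula for perfect complexes and using the structure of the homology of a minimal quasi-projective resolution. By Lemma \ref{lem0.0}, choose a minimal perfect complex $P$ of finitely generated free $R$-modules which is a quasi-projective resolution of $M$ and satisfies $\mathrm{qpd}_RM=\mathrm{sup}P-\mathrm{hsup}P$. By Remark \ref{lem:1.0}(1) we may assume
$$\mathrm{H}(P)\cong\bigoplus_{i=\ell}^{\mathrm{hsup}P-\mathrm{hsup}M}\mathrm{H}(\Sigma^{i}M^{a_i}),\qquad a_{\mathrm{hsup}P-\mathrm{hsup}M}>0 .$$
Since $P$ is perfect, $\mathrm{pd}_RP=\mathrm{sup}P$ (minimality gives equality). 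The derived Auslander--Buchsbaum formula for complexes of finite projective dimension (Foxby--Iyengar, cf.\ \cite{CFH}) then yields
$$\mathrm{sup}P=\mathrm{depth}R-\mathrm{depth}_RP.$$
The whole problem is therefore to show
$$\mathrm{depth}_RP=\mathrm{depth}_RM-(\mathrm{hsup}P-\mathrm{hsup}M),$$
that is, $\mathrm{depth}_RP+\mathrm{hsup}P=\mathrm{depth}_RM+\mathrm{hsup}M$. Granting this, subtracting $\mathrm{hsup}P$ from the formula for $\mathrm{sup}P$ gives the claim.

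\textbf{The main obstacle.} Depth is not in general determined by homology alone, so $P$ and $\bar M:=\bigoplus_i\Sigma^iM^{a_i}$ need not be quasi-isomorphic. I would handle this via the invariant $\mathrm{depth}_RX+\mathrm{hsup}X$.

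\emph{Upper bound.} Write $h=\mathrm{hsup}$. The inequality $\mathrm{depth}_RX\leq \mathrm{depth}_R\mathrm{H}_{h X}(X)-hX$ holds for any $X\in\mathrm{D}^\mathrm{f}_\Box(R)$, since $\mathrm{RHom}_R(k,-)$ applied to the top homology controls the top of $\mathrm{RHom}_R(k,X)$. This alone is not sharp, so I would instead work with Koszul complexes and argue by induction on $t=\mathrm{depth}R$, as in Lemma \ref{lem0.2}.

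\emph{Base case $t=0$.} Here Lemma \ref{lem0.2} forces $\mathrm{depth}_RM+\mathrm{hsup}M\leq 0$. By the proof of Lemma \ref{lem0.2}, minimality gives $\mathrm{Hom}_R(k,\mathrm{H}_s(F))\cong\mathrm{Hom}_R(k,F_s)\neq0$ for $s=\mathrm{sup}F$. Since $t=0$, we have $\mathrm{Hom}_R(k,F_s)\neq0$, so $\mathrm{H}_s(F)\neq 0$ and hence $\mathrm{sup}F=\mathrm{hsup}F$. Moreover $\mathrm{H}_s(F)$ is a sum of copies of $\mathrm{H}_{\mathrm{hsup}M}(M)$ and has a socle. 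Hence $\mathrm{qpd}_RM=0$ and $\mathrm{depth}_RM=-\mathrm{hsup}M$, which is the formula.

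\emph{Inductive step $t>0$.} First suppose $\mathrm{depth}_RM+\mathrm{hsup}M>0$. Choose $x$ strongly regular on $R$, on $M$, and on $P$, avoiding the finitely many primes as in Lemma \ref{lem0.2}. Then $\mathrm{depth}$ of $R$ and of $M$ each drop by one, and $\mathrm{hsup}$ is unchanged because $x$ is regular on the homology. By Lemma \ref{lem0.1}, $K(x;P)$ is a quasi-projective resolution of $K(x;M)$ over $R/(x)$. This $K(x;P)$ is minimal and perfect, with $\mathrm{sup}K(x;P)=\mathrm{sup}P$ and $\mathrm{hsup}K(x;P)=\mathrm{hsup}P$. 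Therefore $\mathrm{qpd}_{R/(x)}K(x;M)=\mathrm{qpd}_RM$, where the inequality $\geq$ uses minimality together with the base-case reasoning. Induction then finishes this case.

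Now suppose instead $\mathrm{depth}_RM+\mathrm{hsup}M\leq 0$ while $t>0$. Equivalently, $\mathrm{Hom}_R(k,\mathrm{H}_{\mathrm{hsup}M}(M))\neq0$. In this case I would directly show $\mathrm{sup}P-\mathrm{hsup}P=t$. The top homology of $P$, namely a sum of copies of $\mathrm{H}_{\mathrm{hsup}M}(M)$, has depth $0$. Applying $\mathrm{RHom}_R(k,-)$, or equivalently $K(\mathfrak m;-)$, to $P$ and using minimality, one shows $\mathrm{depth}_RP=-\mathrm{hsup}P$. Then the Auslander--Buchsbaum formula for $P$ gives $\mathrm{sup}P=t+\mathrm{hsup}P$, as required.

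Verifying this last depth equality carefully is the step I expect to need the most work. My first attempt would compare the truncation triangle $\Sigma^{\mathrm{hsup}P}\mathrm{H}_{\mathrm{hsup}P}(P)\to\ldots$ with the nonvanishing of $\mathrm{Hom}_R(k,-)$ on the top homology.
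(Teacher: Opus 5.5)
Your reduction is correct: with $P$ realizing $\mathrm{qpd}_RM$, the statement is equivalent to $\mathrm{depth}_RP+\mathrm{hsup}P=\mathrm{depth}_RM+\mathrm{hsup}M$. However, that identity is false in general, and so is the statement as formulated for complexes. The reason is that $\mathrm{qpd}_RM$ depends only on the graded module $\mathrm{H}(M)$ (the paper uses $\mathrm{qpd}_RM=\mathrm{qpd}_R\mathrm{H}(M)$ in Corollary \ref{lem4.22}), whereas $\mathrm{depth}_RM$ does not. Take $R=k[[x,y]]$, $P=(0\to R^2\xrightarrow{(x\ y)}R\to 0)$ in degrees $1,0$, and $M=\Sigma R\oplus k$. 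Since $\mathrm{H}_1(P)\cong R$ and $\mathrm{H}_0(P)\cong k$, $P$ is a minimal perfect quasi-projective resolution of $M$ with $\mathrm{sup}P=\mathrm{hsup}P=1$. Hence $\mathrm{qpd}_RM=0$ and $\mathrm{qpd}_RM+\mathrm{hsup}M=1$. On the other hand, $\mathrm{RHom}_R(k,M)\simeq\Sigma\mathrm{RHom}_R(k,R)\oplus\mathrm{RHom}_R(k,k)$ has top homology $\mathrm{Hom}_R(k,k)$ in degree $0$ (because $\mathrm{Hom}_R(k,R)=0$). So $\mathrm{depth}_RM=0$ and $\mathrm{depth}R-\mathrm{depth}_RM=2$. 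In your terms, $\mathrm{depth}_RP+\mathrm{hsup}P=(2-1)+1=2$ while $\mathrm{depth}_RM+\mathrm{hsup}M=1$. The step you isolated therefore cannot be closed without an extra hypothesis.

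In this example your induction breaks at the first use of Lemma \ref{lem0.1}, in the inductive step with $\mathrm{depth}_RM+\mathrm{hsup}M>0$. Here $t=2$, and $x$ is strongly regular on $R$, $M$ and $P$, because strong regularity only concerns the top homology $\mathrm{H}_1(M)=R$. But $x$ kills $\mathrm{H}_0(M)=k$, so your justification ``$x$ is regular on the homology'' is wrong. The exact sequences $0\to\mathrm{H}_j(X)/x\mathrm{H}_j(X)\to\mathrm{H}_j(K(x;X))\to(0:_{\mathrm{H}_{j-1}(X)}x)\to0$ have extension classes not determined by $\mathrm{H}(X)$. With $\bar R=R/(x)$ one gets $\mathrm{H}(K(x;P))\cong\Sigma\bar R\oplus k$ but $\mathrm{H}(K(x;M))\cong\Sigma(\bar R\oplus k)\oplus k$. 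So $K(x;P)$ is not a quasi-projective resolution of $K(x;M)$. In fact $\mathrm{qpd}_{\bar R}K(x;M)=1>\mathrm{qpd}_RM$, so your equality $\mathrm{qpd}_{R/(x)}K(x;M)=\mathrm{qpd}_RM$ fails; the ``$\geq$ by minimality and the base case'' was unjustified anyway.

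Two side remarks. First, the case $\mathrm{depth}_RM+\mathrm{hsup}M\leq0$, which you expected to be hardest, is immediate from $\mathrm{H}_{\mathrm{hsup}X}(\mathrm{RHom}_R(k,X))\cong\mathrm{Hom}_R(k,\mathrm{H}_{\mathrm{hsup}X}(X))$. Second, your auxiliary bound $\mathrm{depth}_RX\leq\mathrm{depth}_R\mathrm{H}_{\mathrm{hsup}X}(X)-\mathrm{hsup}X$ is false: for $0\to R^3\xrightarrow{(x\ y\ z)}R\to0$ over $k[[x,y,z]]$ the two sides are $2$ and $1$.

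The paper's proof goes through the module $C=\mathrm{C}_h(F)$ and the module Auslander--Buchsbaum formula, and it does not supply the missing step. It relies on the same Lemma \ref{lem0.1}, and on identifying the top homology of $K(\mathbf{x};M)$ with $\mathrm{H}_{\mathrm{hsup}M}(M)/\mathbf{x}\mathrm{H}_{\mathrm{hsup}M}(M)$. That identification fails here: $\bar R\oplus k$ has depth $0$, while $\bar R$ has depth $1$. Correspondingly $C=\mathrm{C}_1(P)=R^2$ has depth $2$, not $\mathrm{depth}_RM+\mathrm{hsup}M=1$.

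What survives is the case where one can choose $\mathbf{x}$ of length $u=\mathrm{depth}_RM+\mathrm{hsup}M$ regular on $R$ and on every $\mathrm{H}_i(M)$, e.g.\ $M$ a module as in \cite{GJT}. There your approach works in one stroke, with no induction and no comparison of quasi-projective dimensions. The top homology of $P/\mathbf{x}P$ is $(\mathrm{H}_{\mathrm{hsup}M}(M)/\mathbf{x}\mathrm{H}_{\mathrm{hsup}M}(M))^a$. This has depth $0$, being a sum of copies of the top homology of $K(\mathbf{x};M)$, and $\mathrm{depth}_RK(\mathbf{x};M)=-\mathrm{hsup}M$. Hence $\mathrm{depth}_R(P/\mathbf{x}P)=-\mathrm{hsup}P$, i.e.\ $\mathrm{depth}_RP+\mathrm{hsup}P=u$.
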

\begin{proof} Applying Lemma \ref{lem0.0}, we can choose a minimal quasi-projective resolution $F$ of finitely generated projective $R$-modules such that $\mathrm{qpd}_RM =s-h$ where $s=\mathrm{sup}F$,
$h=\mathrm{hsup}F$. Then $\mathrm{H}_h(F)=\mathrm{H}_{\mathrm{hsup}M}(M)^a$ with $a>0$. Set $C=\mathrm{Coker}d_{h+1}$. The sequence $0\rightarrow F_s\rightarrow\cdots\rightarrow F_{h+1}\stackrel{d_{h+1}}\rightarrow F_{h}\rightarrow C\rightarrow0$
 is exact. As $F$ is minimal, one has $\mathrm{pd}_RC=s-h$.
The Auslander-Buchsbaum formula for modules yields $s-h=\mathrm{depth}R-\mathrm{depth}_RC$, see \cite[Corollary 16.4.2]{CFH}.
Set $u:=\mathrm{depth}_RM+\mathrm{hsup}M$. By Lemma \ref{lem0.1}, we can choose
a sequence $\textbf{\emph{x}}=x_1,\cdots,x_u$ in $R$ which is strongly regular on $R$ and $M$ by Lemma \ref{lem0.2}.
Then $K(\textbf{\emph{x}};F)$ is a quasi-projective resolution
of $K(\textbf{\emph{x}};M)$ over $R/(\textbf{\emph{x}})$ with $\mathrm{depth}_RK(\textbf{\emph{x}};M)=-\mathrm{hsup}K(\textbf{\emph{x}};M)=-\mathrm{hsup}M$ and $\mathrm{hsup}K(\textbf{\emph{x}};F)=\mathrm{hsup}F=h$, it follows that
 $\mathrm{H}_i(K(\textbf{\emph{x}};C))\cong\mathrm{Tor}^R_i(C,R/(\textbf{\emph{x}}))\cong
 \mathrm{H}_{i+h}(F\otimes_RR/(\textbf{\emph{x}}))=0$ for $i>0$.
Thus $\textbf{\emph{x}}$ is $C$-regular. As $$(\mathrm{H}_{\mathrm{hsup}M}(M)/\textbf{\emph{x}}\mathrm{H}_{\mathrm{hsup}M}(M))^a
\cong\mathrm{H}_h(K(\textbf{\emph{x}};F))\rightarrowtail C/\textbf{\emph{x}}C$$ is an injection and $\mathrm{depth}_R\mathrm{H}_{\mathrm{hsup}M}(K(\textbf{\emph{x}};M))=\mathrm{depth}_R\mathrm{H}_{\mathrm{hsup}M}(M)/\textbf{\emph{x}}\mathrm{H}_{\mathrm{hsup}M}(M)=0$, one has $\mathrm{depth}_RC/\textbf{\emph{x}}C=0$. Thus $\mathrm{depth}R-\mathrm{qpd}_RM=\mathrm{depth}_RC=\mathrm{depth}_RM+\mathrm{hsup}M$.
\end{proof}

The following corollary refines Proposition \ref{lem0.14}.

\begin{cor}\label{lem0.4} Let $R$ be a local noetherian ring and $M,N\in\mathrm{D}^\mathrm{f}_\Box(R)$ with finite quasi-projective dimensions. One has
$$\mathrm{qpd}_R(M\oplus N)+\mathrm{hsup}(M\oplus N)=\mathrm{max}\{\mathrm{qpd}_RM+\mathrm{hsup}M,\mathrm{qpd}_RN+\mathrm{hsup}N\}.$$
\end{cor}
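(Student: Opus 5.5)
The plan is to reduce the equality to a statement about depth via the derived Auslander-Buchsbaum formula, Theorem \ref{lem0.3}. By Proposition \ref{lem0.14}, $\mathrm{qpd}_R(M\oplus N)\leq\max\{\mathrm{qpd}_RM,\mathrm{qpd}_RN\}<\infty$. Since $M\oplus N\not\simeq0$ lies in $\mathrm{D}^\mathrm{f}_\Box(R)$, Theorem \ref{lem0.3} applies to each of $M$, $N$ and $M\oplus N$. (If one of $M,N$ is exact, the statement is trivial with the convention $\mathrm{qpd}=-\infty$, so we assume both are nonexact.) We obtain
$$\mathrm{qpd}_R(M\oplus N)+\mathrm{hsup}(M\oplus N)=\mathrm{depth}R-\mathrm{depth}_R(M\oplus N),$$
together with the analogous formulas $\mathrm{qpd}_RM+\mathrm{hsup}M=\mathrm{depth}R-\mathrm{depth}_RM$ and $\mathrm{qpd}_RN+\mathrm{hsup}N=\mathrm{depth}R-\mathrm{depth}_RN$.

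It then suffices to show $\mathrm{depth}_R(M\oplus N)=\min\{\mathrm{depth}_RM,\mathrm{depth}_RN\}$. This is immediate from the definition $\mathrm{depth}_RX=-\mathrm{hsup}\,\mathrm{RHom}_R(k,X)$ and additivity:
$$\mathrm{RHom}_R(k,M\oplus N)\simeq\mathrm{RHom}_R(k,M)\oplus\mathrm{RHom}_R(k,N).$$
The homological supremum of a direct sum is the maximum of the suprema, so the depth of the sum is the minimum of the depths. Consequently
$$\mathrm{depth}R-\min\{\mathrm{depth}_RM,\mathrm{depth}_RN\}=\max\{\mathrm{depth}R-\mathrm{depth}_RM,\ \mathrm{depth}R-\mathrm{depth}_RN\},$$
and substituting the three Auslander-Buchsbaum formulas gives the claimed equality.

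No step is a serious obstacle. The only point needing care is verifying the hypotheses of Theorem \ref{lem0.3} for $M\oplus N$, namely finiteness of its quasi-projective dimension, and this is exactly what Proposition \ref{lem0.14} supplies.
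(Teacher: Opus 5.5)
Your proposal is correct and is essentially the paper's own argument: Proposition \ref{lem0.14} gives $\mathrm{qpd}_R(M\oplus N)<\infty$, and Theorem \ref{lem0.3} is then applied to $M\oplus N$, $M$ and $N$ together with $\mathrm{depth}_R(M\oplus N)=\mathrm{min}\{\mathrm{depth}_RM,\mathrm{depth}_RN\}$. Your treatment of the case where $M$ or $N$ is exact, and your derivation of the depth of a direct sum from additivity of $\mathrm{RHom}_R(k,-)$, fill in details the paper leaves implicit.
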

\begin{proof} By Proposition \ref{lem0.14}, $\mathrm{qpd}_R(M\oplus N)<\infty$. Then Theorem
\ref{lem0.3} implies that \begin{center}$\begin{aligned}\mathrm{qpd}_R(M\oplus N)+\mathrm{hsup}(M\oplus N)
&=\mathrm{depth}R-\mathrm{depth}_R(M\oplus N)\\
&=\mathrm{depth}R-\mathrm{min}\{\mathrm{depth}_RM,\mathrm{depth}_RN\}\\
&=\mathrm{max}\{\mathrm{depth}R-\mathrm{depth}_RM,\mathrm{depth}R-\mathrm{depth}_RN\}.\end{aligned}$\end{center}
We obtain the desired equality by Theorem \ref{lem0.3} again.
\end{proof}

\begin{cor}\label{lem0.8}  Let $R$ be a local noetherian ring and $0\not\simeq M\in\mathrm{D}^\mathrm{f}_\Box(R)$ with $\mathrm{qpd}_RM<\infty$.

$(1)$ One has $\mathrm{qpd}_RM\leq\mathrm{sup}\{\mathrm{depth}R-\mathrm{depth}_R\mathrm{H}_i(M)\hspace{0.03cm}|\hspace{0.03cm}\mathrm{H}_i(M)\neq0\}$.

$(2)$ If $\mathrm{depth}_R\mathrm{H}_{\mathrm{hsup}M}(M)-\mathrm{hsup}M\leq\mathrm{depth}_R\mathrm{H}_i(M)-i$ for all $i\leq \mathrm{hsup}M$, then $\mathrm{qpd}_RM\leq\mathrm{qpd}_R\mathrm{H}_{\mathrm{hsup}M}(M)$.
\end{cor}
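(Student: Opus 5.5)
Both parts follow from the derived Auslander--Buchsbaum formula of Theorem \ref{lem0.3}, combined with elementary bounds on the depth of a bounded complex in terms of the depths of its homology modules. Write $h=\mathrm{hsup}M$. By Theorem \ref{lem0.3}, $\mathrm{qpd}_RM=\mathrm{depth}R-\mathrm{depth}_RM-h$. Everything therefore reduces to a lower bound for $\mathrm{depth}_RM$.

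\textbf{Step 1 (depth bound).} I would prove
\[
\mathrm{depth}_RM\geq\inf\{\mathrm{depth}_R\mathrm{H}_i(M)-i\mid \mathrm{H}_i(M)\neq0\}.
\]
This is standard, e.g.\ via \cite{CFH}. One route is the spectral sequence $\mathrm{Ext}^p_R(k,\mathrm{H}_q(M))\Rightarrow \mathrm{H}_{q-p}(\mathrm{RHom}_R(k,M))$. An alternative is induction on $\mathrm{amp}M$ using the soft truncation triangles $\mathrm{H}_{\mathrm{hinf}M}(M)[\mathrm{hinf}M]\to M\to M'\to$, with the depth lemma $\mathrm{depth}_R Y\geq\min\{\mathrm{depth}_R X,\mathrm{depth}_R Z\}$ for triangles $X\to Y\to Z\to$, and $\mathrm{depth}_R\Sigma^iN=\mathrm{depth}_RN-i$. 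The one check is that the sign conventions match: with $\mathrm{depth}=-\mathrm{hsup}\,\mathrm{RHom}(k,-)$, a module in degree $i$ has depth $\mathrm{depth}_RN-i$.

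\textbf{Step 2, part (1).} Plugging Step 1 into the formula gives
\[
\mathrm{qpd}_RM\leq\mathrm{depth}R-h+\sup_i\{i-\mathrm{depth}_R\mathrm{H}_i(M)\}\leq\sup_i\{\mathrm{depth}R-\mathrm{depth}_R\mathrm{H}_i(M)\},
\]
since $i\leq h$ whenever $\mathrm{H}_i(M)\neq0$.

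\textbf{Step 3, part (2).} Under the hypothesis, the infimum in Step 1 is attained at $i=h$, so $\mathrm{depth}_RM\geq\mathrm{depth}_R\mathrm{H}_h(M)-h$. In fact equality holds, since $\mathrm{H}_h(\mathrm{RHom}_R(k,M))$-type arguments show $\mathrm{depth}_RM\leq\mathrm{depth}_R\mathrm{H}_h(M)-h$, but only the inequality is needed. Hence $\mathrm{qpd}_RM\leq\mathrm{depth}R-\mathrm{depth}_R\mathrm{H}_h(M)$.

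If $\mathrm{qpd}_R\mathrm{H}_h(M)<\infty$, then Theorem \ref{lem0.3} applied to the module $\mathrm{H}_h(M)$ (which has $\mathrm{hsup}=0$) gives $\mathrm{qpd}_R\mathrm{H}_h(M)=\mathrm{depth}R-\mathrm{depth}_R\mathrm{H}_h(M)$, and we are done. If $\mathrm{qpd}_R\mathrm{H}_h(M)=\infty$, the inequality is trivial.

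The only genuine work is Step 1, specifically getting the shift and sign conventions right in the truncation induction. I expect the rest to be formal.
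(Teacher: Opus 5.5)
Your proposal is correct and is essentially the paper's argument. The paper likewise feeds the bound $\mathrm{depth}_RM\geq\inf\{\mathrm{depth}_R\mathrm{H}_i(M)-i\mid \mathrm{H}_i(M)\neq0\}$ into Theorem \ref{lem0.3}, quoting it from Iyengar rather than proving it; for (2) it uses Iyengar's equality $\mathrm{depth}_RM=\mathrm{depth}_R\mathrm{H}_h(M)-h$ together with the module Auslander--Buchsbaum formula, whereas you rightly observe that the inequality alone suffices. One small slip in your truncation: with $u=\mathrm{hinf}M$, the correct triangle is $\tau_{\geq u+1}M\to M\to\Sigma^{u}\mathrm{H}_u(M)\to$ (the lowest homology is a quotient, not a sub), or you can use $\Sigma^{h}\mathrm{H}_h(M)\to M\to\tau_{\leq h-1}M\to$ instead; since $M$ is the middle term either way, the depth lemma gives the same induction.
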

\begin{proof} (1) By  \cite[Proposition 2.7]{Iy}, one has $\mathrm{depth}_RM\geq\mathrm{inf}\{\mathrm{depth}_R\mathrm{H}_i(M)-i\hspace{0.03cm}|\hspace{0.03cm}\mathrm{H}_i(M)\neq0\}$, it follows from Theorem
\ref{lem0.3} that \begin{center}$\begin{aligned}\mathrm{qpd}_RM+\mathrm{hsup}M
&=\mathrm{depth}R-\mathrm{depth}_RM\\
&\leq\mathrm{depth}R-\mathrm{inf}\{\mathrm{depth}_R\mathrm{H}_i(M)-i\hspace{0.03cm}|\hspace{0.03cm}\mathrm{H}_i(M)\neq0\}\\
&\leq\mathrm{sup}\{\mathrm{depth}R-\mathrm{depth}_R\mathrm{H}_i(M)+i\hspace{0.03cm}|\hspace{0.03cm}\mathrm{H}_i(M)\neq0\}\\
&\leq\mathrm{sup}\{\mathrm{depth}R-\mathrm{depth}_R\mathrm{H}_i(M)\hspace{0.03cm}|\hspace{0.03cm}\mathrm{H}_i(M)\neq0\}+\mathrm{hsup}M.\end{aligned}$\end{center}
We obtain the desired inequalities in (1).

(2) Set $h=\mathrm{hsup}M$.  By \cite[Theorem 2.3]{Iy}, $\mathrm{depth}_RM=\mathrm{depth}_R\mathrm{H}_h(M)-h$. The case of $\mathrm{qpd}_R\mathrm{H}_h(M)=\infty$ is trivial. Assume that $\mathrm{qpd}_R\mathrm{H}_h(M)<\infty$. It follows from Theorem \ref{lem0.3} and \cite[Theorem 4.4]{GJT} that $\mathrm{qpd}_RM=\mathrm{depth}R-\mathrm{depth}_R\mathrm{H}_h(M)=\mathrm{qpd}_R\mathrm{H}_h(M)$.
\end{proof}

 Let $0\not\simeq M\in\mathrm{D}^\mathrm{f}_\sqsubset(R)$. By \cite[Theorem 17.2.1]{CFH}, $\mathrm{depth}_RM\leq\mathrm{dim}_R\mathrm{H}_{\mathrm{hsup}M}(M)-\mathrm{hsup}M$. The next corollary establishes a finer upper bound on $\mathrm{depth}_RM$.

\begin{cor}\label{lem0.8'}  Let $R$ be a local noetherian ring and $0\not\simeq M\in\mathrm{D}^\mathrm{f}_\Box(R)$ with $\mathrm{qpd}_RM<\infty$.
 If $\mathrm{Ext}^{j}_R(\mathrm{H}(M),\mathrm{H}(M))=0$ for $j\geq2+\mathrm{amp}M$, then $\mathrm{depth}_RM\leq\mathrm{depth}_R\mathrm{H}_{\mathrm{hsup}M}(M)-\mathrm{hsup}M$.
\end{cor}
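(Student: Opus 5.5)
The plan is to reduce the statement to a comparison between the classical Auslander--Buchsbaum formula and a depth estimate for a module built from $M$. Under the Ext-vanishing hypothesis, Theorem \ref{lem0.6}(2) shows that $\mathrm{qpd}_RM<\infty$ forces $\mathrm{pd}_RM<\infty$. Theorem \ref{lem0.6}(1) then gives $\mathrm{pd}_RM=\mathrm{qpd}_RM+\mathrm{hsup}M$. Combining this with Theorem \ref{lem0.3}, we obtain
$$\mathrm{pd}_RM=\mathrm{depth}R-\mathrm{depth}_RM.$$
So the claim $\mathrm{depth}_RM\leq\mathrm{depth}_R\mathrm{H}_h(M)-h$, with $h=\mathrm{hsup}M$, is equivalent to
$$\mathrm{depth}R-\mathrm{depth}_R\mathrm{H}_h(M)\leq\mathrm{qpd}_RM.$$

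To bound the left-hand side, I would use the minimal perfect quasi-projective resolution $F$ from Lemma \ref{lem0.0}. Put $s=\mathrm{sup}F$ and $h'=\mathrm{hsup}F$, so that $\mathrm{qpd}_RM=s-h'$. As in the proof of Theorem \ref{lem0.3}, set $C=\mathrm{Coker}\,d_{h'+1}$. Then $\mathrm{pd}_RC=s-h'$, and the module Auslander--Buchsbaum formula gives $\mathrm{depth}_RC=\mathrm{depth}R-\mathrm{qpd}_RM$. It then suffices to show $\mathrm{depth}_R\mathrm{H}_h(M)\geq\mathrm{depth}_RC$. We have an embedding $\mathrm{H}_{h'}(F)\cong\mathrm{H}_h(M)^a\rightarrowtail C$ whose cokernel is $\mathrm{B}_{h'-1}(F)$, a submodule of the free module $F_{h'-1}$. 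The depth lemma on $0\to\mathrm{H}_h(M)^a\to C\to \mathrm{B}_{h'-1}(F)\to0$ gives $\mathrm{depth}\,\mathrm{H}_h(M)\geq\min\{\mathrm{depth}\,C,\mathrm{depth}\,\mathrm{B}_{h'-1}(F)+1\}$. So the key point is $\mathrm{depth}\,\mathrm{B}_{h'-1}(F)\geq\mathrm{depth}\,C-1$.

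Since $\mathrm{B}_{h'-1}(F)$ is the image of $C$ inside $F_{h'-1}$, its depth is at least $1$ whenever $\mathrm{depth}R\geq1$. This is not enough in general. Instead, I would exploit that, by the proof of Theorem \ref{lem0.6}(2), $F$ is actually a semi-projective resolution of $\bar M=\bigoplus\Sigma^iM^{a_i}$. Hence $\mathrm{RHom}_R(k,F)\simeq\bigoplus\Sigma^i\mathrm{RHom}_R(k,M)^{a_i}$, so $\mathrm{depth}_R\bar M=\mathrm{depth}_RM-(h'-h)$. Equivalently, I would work directly with $M$, now known to be perfect, and use its minimal free resolution $G$, with $\mathrm{sup}\,G=\mathrm{pd}_RM$. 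Set $C'=\mathrm{Coker}(G_{h+1}\to G_h)$. Then $\mathrm{pd}_RC'=\mathrm{pd}_RM-h$, so by Auslander--Buchsbaum $\mathrm{depth}\,C'=\mathrm{depth}R-\mathrm{pd}_RM+h=\mathrm{depth}_RM+h$. Thus the goal becomes $\mathrm{depth}\,\mathrm{H}_h(M)\geq\mathrm{depth}\,C'$, for the submodule $\mathrm{H}_h(M)\subseteq C'$ whose quotient embeds in the free module $G_{h-1}$.

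The main obstacle is exactly this inequality. I would try first to prove it by induction on $\mathrm{depth}\,C'$ using regular sequences, as in Lemma \ref{lem0.1} and Theorem \ref{lem0.3}. Choose $x\in\mathfrak m$ strongly regular on $R$ and $M$, hence regular on each $\mathrm{H}_i(M)$, on $C'$ and on $R$. Koszul reduction then preserves all the data: the exact sequences of Theorem \ref{lem0.3} give $\mathrm{H}_h(K(x;M))=\mathrm{H}_h(M)/x\mathrm{H}_h(M)$, and $C'$ is replaced by $C'/xC'$. Both depths drop by exactly one, so we reduce to the case $\mathrm{depth}\,C'=0$, where the inequality is trivial. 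If a strongly regular element cannot be chosen because $\mathrm{depth}_RM+h=0$, then $\mathrm{depth}\,C'=0$ already and again there is nothing to prove. The remaining technical check is that the Ext-vanishing hypothesis is only needed to pass to finite projective dimension, and that strong regularity (Lemma \ref{lem0.2}) is available for $\mathrm{depth}_RM+h$ steps.
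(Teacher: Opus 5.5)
There is a genuine gap. Your reduction to $\mathrm{depth}_R\mathrm{H}_h(M)\geq\mathrm{depth}_RC'$ is correct; given Theorem \ref{lem0.6}, it is equivalent to the corollary. But from that point on you use nothing except $\mathrm{pd}_RM<\infty$, and for perfect complexes this inequality is false.

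Take $R=k[[x,y,z]]$ and $M=(0\to R^3\xrightarrow{(x\ y\ z)}R\to0)$ in degrees $1,0$. This is a minimal perfect complex, so $\mathrm{pd}_RM=1$ and $\mathrm{depth}_RM=2$. Here $h=1$, $G=M$, and $C'=R^3$ has depth $3$. Also $\mathrm{H}_0(M)=k$, and $\mathrm{H}_1(M)$ is the second syzygy of $k$, which has projective dimension $1$ and hence depth $2<3$. Equivalently, $\mathrm{depth}_RM=2>1=\mathrm{depth}_R\mathrm{H}_1(M)-1$. This $M$ does not satisfy the hypothesis of the corollary, since $2+\mathrm{amp}M=3$ and $\mathrm{Ext}^3_R(k,k)\neq0$. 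So the Ext condition has to do more than give finite projective dimension.

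The step that fails is the Koszul reduction. With the definition in 2.3 one has $\mathrm{Ass}_RM=\mathrm{Ass}_R\mathrm{H}_{\mathrm{hsup}M}(M)$. Thus strong regularity on $M$ only means regularity on the top homology; it does not make $x$ regular on the lower $\mathrm{H}_i(M)$. The passing remark to that effect in the proof of Lemma \ref{lem0.1} cannot be relied on here.

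In the example every $0\neq x\in\mathfrak m$ is strongly regular on $R$ and $M$, yet $x\mathrm{H}_0(M)=0$. The long exact sequence gives $0\to\mathrm{H}_1(M)/x\mathrm{H}_1(M)\to\mathrm{H}_1(K(x;M))\to(0:_{\mathrm{H}_0(M)}x)=k\to0$, not $\mathrm{H}_1(K(x;M))\cong\mathrm{H}_1(M)/x\mathrm{H}_1(M)$. Already after one step, $\mathrm{depth}\,\mathrm{H}_1(M)/x\mathrm{H}_1(M)=1<2=\mathrm{depth}\,C'/xC'$. The module-level version fails too: $\mathrm{B}_{h-1}/x\mathrm{B}_{h-1}$ need not embed in a free $R/(x)$-module (here $\mathfrak m/x\mathfrak m$ has depth $0$), so the short exact sequence does not survive a second reduction.

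The missing idea is the one the paper uses. The Ext vanishing, through the K\"unneth spectral sequence, yields a chain map $P\to\mathrm{H}(P)$ inducing the identity on homology. Hence $0\to\mathrm{H}_{h'}(P)\to\mathrm{C}_{h'}(P)\to\mathrm{B}_{h'-1}(P)\to0$ splits, and $\mathrm{H}_h(M)^a$ is a direct summand of $C$. That gives $\mathrm{depth}_R\mathrm{H}_h(M)\geq\mathrm{depth}_RC$ directly in your second paragraph, replacing the depth-lemma attempt. The paper phrases this as $\mathrm{pd}_R\mathrm{H}_h(M)\leq\mathrm{pd}_RC=\mathrm{qpd}_RM$ and concludes with Theorem \ref{lem0.3} and the Auslander--Buchsbaum formula for modules.
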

\begin{proof} Choose a bounded  quasi-projective resolution $P$ such that $\mathrm{qpd}_RM=\mathrm{pd}_R\mathrm{C}_{\mathrm{hsup}P}(P)$ and $\mathrm{H}(P)\cong\bigoplus_{i=\mathrm{hinf}P-\mathrm{hinf}M}^{\mathrm{hsup}P-\mathrm{hsup}M}\mathrm{H}(\Sigma^{i}M^{a_i})$.
Consider the K$\ddot{\mathrm{u}}$nneth spectral sequence $$E^{p,q}_2=\prod_{s\in\mathbb{Z}}\mathrm{Ext}^{p}_R(\mathrm{H}_s(P),\mathrm{H}_{s-q}(\Sigma^{-1}P))\Longrightarrow\mathrm{H}^{p+q}(\mathrm{Hom}_R(P,\mathrm{H}(\Sigma^{-1}P))).$$
As $\mathrm{Ext}^{j}_R(\mathrm{H}(M),\mathrm{H}(M))=0$ for all $j\geq2+\mathrm{amp}M$, it follows from \cite[Theorem 11.43]{R} that $$\mathrm{H}_{-1}(\mathrm{Hom}_R(P,\mathrm{H}(\Sigma^{-1}P)))\rightarrow\prod_{t\in\mathbb{Z}}\mathrm{Hom}_R(\mathrm{H}_{t}(P),\mathrm{H}_{t-1}(\Sigma^{-1}P))
\rightarrow0$$ is exact.
 So there exists a chain map $f:P\rightarrow\mathrm{H}(P)$ such that $\mathrm{H}_t(f)=\mathrm{id}_{\mathrm{H}_t(P)}$ for $t\in\mathbb{Z}$,  it implies that the sequence $$0\rightarrow\mathrm{H}_{\mathrm{hsup}P}(P)\rightarrow\mathrm{C}_{\mathrm{hsup}P}(P)\rightarrow\mathrm{B}_{\mathrm{hsup}P-1}(P)\rightarrow0$$ is
 split. Then $\mathrm{pd}_R\mathrm{H}_{\mathrm{hsup}M}(M)=\mathrm{pd}_R\mathrm{H}_{\mathrm{hsup}P}(P)\leq\mathrm{pd}_R\mathrm{C}_{\mathrm{hsup}P}(P)$, the inequality follows by Theorem \ref{lem0.3} and Auslander-Buchsbaum formula for modules.
\end{proof}

The following remark gives a partial answer of \cite[Remark 2.7]{G}.

\begin{rem}\label{lem:0.9}{\rm Let $R$ be local noetherian and $0\not\simeq M\in\mathrm{D}^\mathrm{f}_\Box(R)$ with $\mathrm{qpd}_RM<\infty$. There is a quasi-projective resolution $P$ that is perfect, such that $\mathrm{qpd}_RM=\mathrm{pd}_R\mathrm{C}_{\mathrm{hsup}P}(P)$. Then the completion $\widehat{P}$ is a quasi-projective resolution of the completion $\widehat{M}$ and $\mathrm{pd}_{\widehat{R}}\mathrm{C}_{\mathrm{hsup}P}(\widehat{P})=\mathrm{pd}_R\mathrm{C}_{\mathrm{hsup}P}(P)$ and hence
$\mathrm{qpd}_{\widehat{R}}\widehat{M}\leq\mathrm{qpd}_RM$ by Lemma \ref{lem0.7}. Moreover, $\mathrm{qpd}_{\widehat{R}}\widehat{M}=\mathrm{qpd}_RM$  by Theorem \ref{lem0.3}.}
\end{rem}
\bigskip

\section{\bf Quasi-projective dimensions related to complete intersection rings}\label{partial-ans-Q1}
This section gives some characterizations of complete intersection local rings, and provides some partial answers to Question 1 raised in the introduction.

Recall that the \emph{embedding dimension} of a local noetherian ring $(R,\mathfrak{m},k)$, denoted by $\mathrm{edim}R$, is the least number of generators
of $\mathfrak{m}$ or, equivalently, the dimension of $\mathfrak{m}/\mathfrak{m}^{2}$ as an $R/\mathfrak{m}$-vector space. An ideal $\mathfrak{a}$ is called \emph{$\mathfrak{m}$-primary} if $\mathfrak{m}=\sqrt{\mathfrak{a}}$ the radical of $\mathfrak{a}$.
Next, we answer  \cite[Question 3.12]{GJT} under some conditions.

\begin{thm}\label{lem3.004} Let $(R,\mathfrak{m},k)$ be a local noetherian ring with $\widehat{R}\cong Q/I$ where $(Q,\mathfrak{n})$ is a regular local ring. Suppose one of the following conditions is satisfied

$(1)$ $\mathrm{Thick}_R[M]=\mathrm{Thick}_R[\mathrm{H}(M)]$ for all $M\in\mathrm{D}^\mathrm{f}_\Box(R)\backslash\mathrm{perf}R$.

$(2)$ $\mathrm{dim}Q\leq \mathrm{dim}R+2$.

$(3)$ $I$ is a Burch ideal of $Q$.

$(4)$ the minimal number of generators of $I$ is at most $2$.\\
If every finitely generated $R$-module has finite quasi-projective dimension, then $R$ is a complete intersection.
\end{thm}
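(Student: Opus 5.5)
We reduce every case to Pollitz's criterion \cite[Theorem 5.2]{P}: $R$ is a complete intersection if and only if every nonexact complex in $\mathrm{D}^\mathrm{f}(R)$ is virtually small. A complex $M$ is virtually small if $\mathrm{Thick}_R[M]$ contains a nonexact perfect complex.

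The basic observation is the following. Suppose $M$ is a nonexact complex in $\mathrm{D}^\mathrm{f}_\Box(R)$ whose homology satisfies $\mathrm{qpd}_R\mathrm{H}(M)<\infty$. By Lemma \ref{lem0.0} there is a perfect complex $P\not\simeq0$ with $\mathrm{H}(P)\cong\bigoplus_i\mathrm{H}(\Sigma^i\mathrm{H}(M)^{a_i})$. Since $\mathrm{Thick}_R[P]\subseteq\mathrm{Thick}_R[\mathrm{H}(P)]$ and $\mathrm{H}(P)$ is a finite sum of shifts of $\mathrm{H}(M)$, we get $P\in\mathrm{Thick}_R[\mathrm{H}(M)]$. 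So $\mathrm{H}(M)$ is virtually small.

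\textbf{Case (1).} Take a nonexact $M\in\mathrm{D}^\mathrm{f}_\Box(R)$. If $M$ is perfect, it is trivially virtually small. Otherwise the hypothesis gives $\mathrm{Thick}_R[M]=\mathrm{Thick}_R[\mathrm{H}(M)]$, and $\mathrm{H}(M)$ is a finitely generated module (a finite sum of shifted modules), so it has finite $\mathrm{qpd}$. By the observation, $M$ is virtually small. Unbounded objects of $\mathrm{D}^\mathrm{f}(R)$ need care. Pollitz's theorem is essentially about $\mathrm{D}^\mathrm{f}_\Box(R)$, and one can also note that each nonexact complex in $\mathrm{D}^\mathrm{f}(R)$ is handled via its behaviour after passing to suitable truncations or the Koszul complex. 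I would invoke the form of \cite[Theorem 5.2]{P} stated for bounded complexes, as used in \cite{BGP}.

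\textbf{Cases (2)–(4).} Here I would first pass to $\widehat R$. By Remark \ref{lem:0.9}, $\mathrm{qpd}_{\widehat R}\widehat N<\infty$ for every finitely generated $R$-module $N$. Every finitely generated $\widehat R$-module whose support is the maximal ideal, in particular $k$ and all finite-length modules, is of the form $\widehat N$. Being a complete intersection is also read off $\widehat R$. So we may assume $R=Q/I$ is complete.

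Next, cut down by a maximal $R$-regular sequence in $\mathfrak m\setminus\mathfrak m^2$ lifted to a $Q$-regular sequence, using Lemma \ref{lem0.1} to keep $\mathrm{qpd}$ finite for the quotients of modules. The hope is that $\mathrm{qpd}$ finiteness for all modules over $R/(\mathbf x)$ is inherited. This is the point where I expect trouble: lifting finiteness from $R$-modules to all $R/(\mathbf x)$-modules is essentially Question 2. So I would instead work directly with the relevant known structural results.

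For (2): rings of codimension $\mathrm{edim}R-\mathrm{depth}R\le 2$ (after reduction) are either complete intersections or Golod rings of codepth 2. For these, Briggs–Grifo–Pollitz / Avramov classification shows that non-c.i. ones have $k$ not virtually small in a strong sense. More precisely, I would show that if $R$ is not c.i., then $\mathrm{Thick}_R[k]$ contains no nonzero perfect complex. This contradicts the observation applied to $M=k$.

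For (3): the theory of Burch rings in \cite{DKT} gives that if $I$ is Burch and $R$ is not a hypersurface, then $k$ is a direct summand of a syzygy $\Omega^n N$ for any module $N$ of infinite projective dimension, and $R$ is dominant. Dominance makes every non-perfect complex generate $k$, hence each non-perfect $M$ satisfies $\mathrm{H}(M)$-type generation, reducing to the $k$ case. Finite $\mathrm{qpd}_Rk$ then forces $R$ regular or a hypersurface by \cite{GJT}/\cite{BGP}.

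For (4): with $\mu(I)\le2$, either $I$ is generated by a regular sequence, or $I=a\cdot J$ with a common factor, and then $R$ is not c.i. I would show directly that $\mathrm{qpd}_Rk=\infty$ in this case via the derived Auslander–Buchsbaum formula, Theorem \ref{lem0.3}, combined with Pollitz's support variety criterion.

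The main obstacle is cases (2)–(4). The key is producing, for a non-c.i. $R$, a single finitely generated module (ideally $k$) whose thick subcategory contains no nonzero perfect complex; after that, the observation above concludes.
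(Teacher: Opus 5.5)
Your case (1) is correct and is the paper's argument. Cases (2)--(4), however, rest on a false premise.

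\textbf{The plan based on $k$ cannot work.} You try to show that for non-c.i. $R$ the residue field $k$ is not virtually small, or has $\mathrm{qpd}_Rk=\infty$. This fails for \emph{every} local ring. The Koszul complex $K(\mathfrak m;R)$ on a minimal generating set of $\mathfrak m$ is a nonexact perfect complex whose homology is annihilated by $\mathfrak m$, so $\mathrm{H}_i(K(\mathfrak m;R))\cong k^{\beta_i}$. Consequently:
\begin{itemize}
\item $K(\mathfrak m;R)$ is a quasi-projective resolution of $k$;
\item $\mathrm{qpd}_Rk<\infty$ always (in fact it equals $\mathrm{depth}R$ by Theorem~\ref{lem0.3});
\item $K(\mathfrak m;R)\in\mathrm{Thick}_R[k]$ always.
\end{itemize}
So three of your claims are false. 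The claim in (2) that $\mathrm{Thick}_R[k]$ contains no nonzero perfect complex already fails for $R=k[x,y]/(x,y)^2$, which satisfies (2), is not c.i., and has $R\in\mathrm{Thick}_R[k]$. The claim in (3) that finite $\mathrm{qpd}_Rk$ forces a hypersurface is false, and so is the claim in (4) that $\mathrm{qpd}_Rk=\infty$. The dominance remark in (3) does not help either, since dominance only places $k$ in $\mathrm{Thick}_R[R,M]$, not in $\mathrm{Thick}_R[M]$.

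Exhibiting some other finitely generated module that is not virtually small would be at least as strong as the statement you are proving; this is how \cite{BGP} handle equipresented rings. You give no candidate.

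There are also two secondary gaps:
\begin{itemize}
\item In (2), $\dim Q\le\dim R+2$ bounds the codepth by $2$ only if $R$ is Cohen--Macaulay.
\item In the completion step, the hypothesis transfers only to modules extended from $R$, not to all finitely generated $\widehat R$-modules.
\end{itemize}

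\textbf{The missing idea.} The paper does not use virtual smallness in (2)--(4) at all. By \cite[Theorem 6.5]{GJT}, finiteness of quasi-projective dimension for all finitely generated $R$-modules already forces $R$, and hence $\widehat R=Q/I$, to be Gorenstein. Each condition then becomes a structure statement about Gorenstein quotients of a regular local ring.
\begin{itemize}
\item In (2), $\mathrm{ht}I=\dim Q-\dim R\le 2$, and a Gorenstein ideal of height at most two in a regular local ring is a complete intersection (Serre). The paper instead reduces to the artinian case and quotes \cite[Proposition 2.15]{SS}.
\item In (3), a Burch ideal satisfies $I:\mathfrak n\neq I$, so $\mathrm{depth}\,Q/I=0$. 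Gorenstein then makes $I$ $\mathfrak n$-primary, and \cite[Theorem 4.4]{DKT} gives $I=(x_1^r,x_2,\dots,x_d)$ for a regular system of parameters $x_1,\dots,x_d$.
\item In (4), $Q/I$ is Cohen--Macaulay and $I$ has at most two generators, which forces $I$ to be generated by a $Q$-regular sequence. Otherwise $I$ would be an unmixed height-one ideal in the UFD $Q$, hence principal.
\end{itemize}
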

\begin{proof} (1)  Let $0\not\simeq M\in\mathrm{D}^\mathrm{f}_\Box(R)$. If $M$ is perfect, then $M$ is virtually small. Assume that $M$ is not perfect. Then $\mathrm{qpd}_RM<\infty$ by Corollary \ref{lem4.22}, and there exists a perfect complex $P$ such that $P\in\mathrm{Thick}_R[\mathrm{H}(M)]$, so $\mathrm{H}(M)$ is virtually small. Hence $M$ is  virtually small, and $R$ is a complete intersection.

(2) Since $R$ is Gorenstein by \cite[Theorem 6.5]{GJT}, there is a maximal $R$-regular sequence $\textbf{\emph{x}}=x_1,\cdots,x_d$ which is a system of
parameters of $R$. Let
$N$ be a non-zero finitely generated $R/(\textbf{\emph{x}})$-module. As $\mathrm{qpd}_{R}N<\infty$ and $N$ is an $R/(x_1)$-module, it follows from \cite[Proposition 2.11]{G} that $\mathrm{qpd}_{R/(x_1^{n_1})}N<\infty$ for some $n_1\geq1$. As $x_2$ is $R/(x_1^{n_1})$-regular by \cite[Exercise 1.1.10(b)]{BH}, $\mathrm{qpd}_{R/(x_1^{n_1},x_2^{n_2})}N<\infty$ for some $n_2\geq1$ by \cite[Proposition 2.11]{G} again. Continuing this process, one has $\mathrm{qpd}_{R/(x_1^{n_1},\cdots,x_d^{n_d})}N<\infty$ for $n_1,\cdots,n_d\geq1$.
Set $\mathfrak{a}=(x_1^{n_1},\cdots,x_d^{n_d})$. Then $x_1^{n_1},\cdots,x_d^{n_d}$ is an $R$-regular sequence by \cite[Exercise 1.1.10(b)]{BH}, so $\mathrm{dim}R/\mathfrak{a}=0$. Also $R$ is a complete intersection if and only if so is $R/\mathfrak{a}$ by \cite[Theorem 2.3.4]{BH}. We may assume $\mathrm{dim}R=0$. Then  $\mathrm{edim}R\leq \mathrm{dim}Q\leq2$, the statement follows by \cite[Proposition 2.15]{SS}.

(3) By \cite[Remark 2.9]{DKT}, $\mathrm{depth}\widehat{R}=0$.
By \cite[Theorem 6.5]{GJT}, $R$ is Gorenstein. So $I$ is $\mathfrak{n}$-primary, it follows by \cite[Theorem 4.4]{DKT} that $I=(x^r_1,x_2,\cdots, x_d)$, where $x_1,x_2,\cdots, x_d$ is a minimal system of generators of $\mathfrak{n}$ and $r > 0$. As $Q$ is regular, $x_1,x_2,\cdots, x_d$ is a $Q$-regular sequence by \cite[Proposition 2.2.5]{BH}, so is $x^r_1,x_2,\cdots, x_d$. Thus $R$ is a complete intersection.

(4) By \cite[Theorem 6.5]{GJT}, $R$ is Gorenstein, so is $\widehat{R}$. If the minimal number of generators of $I$ is at most $2$, it follows by \cite[Remarks 8.11 and 9.3]{T23} that $\widehat{R}$ is a hypersurface. Thus $R$ is a complete intersection.
\end{proof}

If
$R$ is a nonregular hypersurface, then $\mathrm{Thick}_R[M]=\mathrm{Thick}_R[\mathrm{H}(M)]$ for all $M\in\mathrm{D}^\mathrm{f}_\Box(R)$ by \cite[Propositions 7.6,7.7 and Theorem 6.3]{T}. By \cite[Remark 6.1 and Theorem 6.3]{T}, $R$ is regular if and only if $k\in\mathrm{Thick}_R[M]=\mathrm{Thick}_R[\mathrm{H}(M)]$ for all $0\not\simeq M\in\mathrm{D}^\mathrm{f}_\Box(R)$. The following corollary sheds additional light on Theorem \ref{lem3.004}(1).

 \begin{cor}\label{lem3.005'} Let $(R,\mathfrak{m},k)$ be a dominant local ring with an isolated singularity. Then the following conditions are equivalent:

 $(1)$ $R$ is a complete intersection;

 $(2)$ $k\in\mathrm{Thick}_R[M]$ for all $M\in\mathrm{D}^\mathrm{f}_\Box(R)\backslash\mathrm{perf}R$;

 $(3)$ $\mathrm{Thick}_R[M]=\mathrm{Thick}_R[R/\mathfrak{p}:\mathfrak{p}\in\mathrm{Supp}_RM]$ for all $M\in\mathrm{D}^\mathrm{f}_\Box(R)\backslash\mathrm{perf}R$;

 $(4)$ $k\otimes^\mathrm{L}_RM\in\mathrm{Thick}_R[M]$ for all $M\in\mathrm{D}^\mathrm{f}_\Box(R)\backslash\mathrm{perf}R$.\\
  In addition, if $R$ is artinian then the above conditions are equivalent to

$(5)$ $\mathrm{Thick}_R[M]=\mathrm{Thick}_R[k]$ for all $M\in\mathrm{D}^\mathrm{f}_\Box(R)\backslash\mathrm{perf}R$;

$(6)$ $\mathrm{Thick}_R[M]=\mathrm{Thick}_R[\mathrm{H}(M)]$ and $\mathrm{qpd}_{R}M<\infty$ for all $M\in\mathrm{D}^\mathrm{f}_\Box(R)\backslash\mathrm{perf}R$.
\end{cor}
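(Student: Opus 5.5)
The plan is to prove a cycle of implications, using three facts available earlier in the paper. First, the \emph{dominant} hypothesis: $k\in\mathrm{Thick}_R[R,M]$ for every non-perfect $M\in\mathrm{D}^\mathrm{f}_\Box(R)$. Second, the isolated singularity hypothesis: $R_\mathfrak{p}$ is regular for every nonmaximal prime $\mathfrak{p}$. Third, Pollitz's theorem \cite[Theorem 5.2]{P}: $R$ is a complete intersection if and only if every nonexact complex in $\mathrm{D}^\mathrm{f}(R)$ is virtually small.

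\textbf{(1)$\Rightarrow$(2).} Let $M$ be non-perfect. By \cite{P}, $M$ is virtually small, so $\mathrm{Thick}_R[M]$ contains a nonexact perfect complex $P$. The task is to upgrade "contains $R$ after adding a perfect" to "contains $k$". Since $P$ is perfect and nonexact, $k\in\mathrm{Thick}_R[P\otimes^\mathrm{L}_R k]$, because $P\otimes^\mathrm{L}_R k$ is a nonzero sum of shifts of $k$. Over a complete intersection, Pollitz's support-variety description of thick subcategories is the tool I would use. With an isolated singularity, the cohomological support of $M$ is concentrated at the closed point. A non-perfect $M$ has nonempty support variety, and $k$ has the full support variety; I would check that, when $\mathrm{Supp}_R M=\{\mathfrak{m}\}$ or via the dominant hypothesis, this forces $k\in\mathrm{Thick}_R[M]$. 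This is the step I expect to be the main obstacle. A cleaner route I would try first is to combine dominance, which gives $k\in\mathrm{Thick}_R[R,M]$, with the perfect $P\in\mathrm{Thick}_R[M]$, and argue via a Koszul-object construction $K(\mathfrak{m};P)\in\mathrm{Thick}_R[M]$, which builds $k$ when $R$ is artinian-like after localization.

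\textbf{(2)$\Rightarrow$(1).} If $k\in\mathrm{Thick}_R[M]$ for every non-perfect $M$, then each such $M$ builds $k$. The complex $k\otimes^\mathrm{L}_R K(\mathfrak{m};R)$, or more simply $K(\mathfrak{m};R)$ itself, is a nonexact perfect complex, and it lies in $\mathrm{Thick}_R[k]$ because its homology has finite length. Hence every non-perfect $M$ is virtually small, and perfect complexes are trivially virtually small. Pollitz then gives (1).

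\textbf{(2)$\Leftrightarrow$(3) and (2)$\Leftrightarrow$(4).}
\begin{itemize}
\item (3)$\Rightarrow$(2): apply (3) and note $\mathfrak{m}\in\mathrm{Supp}_RM$, so $R/\mathfrak{m}=k\in\mathrm{Thick}_R[M]$.
\item (2)$\Rightarrow$(3): the inclusion $\mathrm{Thick}_R[M]\subseteq\mathrm{Thick}_R[R/\mathfrak{p}]$ follows from filtering $\mathrm{H}(M)$ by prime cyclic modules. For the reverse inclusion, use the isolated singularity: each $R/\mathfrak{p}$ with $\mathfrak{p}\ne\mathfrak{m}$ is handled by localizing, with the local regularity giving perfection away from $\mathfrak{m}$, together with $k\in\mathrm{Thick}_R[M]$.
\item (2)$\Rightarrow$(4): $k\otimes^\mathrm{L}_RM$ is a finite-length complex of $k$-vector spaces in homology, hence lies in $\mathrm{Thick}_R[k]\subseteq\mathrm{Thick}_R[M]$.
\item (4)$\Rightarrow$(2): since $M$ is not perfect, $k\otimes^\mathrm{L}_RM$ has unbounded homology. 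Rather than use it directly, apply dominance, $k\in\mathrm{Thick}_R[R,M]$, and tensor with $k$. This gives $k\otimes^\mathrm{L}_Rk\in\mathrm{Thick}_R[k,\,k\otimes^\mathrm{L}_RM]$. I would then use $K(\mathfrak{m};M)\in\mathrm{Thick}_R[M]$ to replace $R$ by $K(\mathfrak{m};R)$, obtaining $k\in\mathrm{Thick}_R[K(\mathfrak{m};R),M]$, and then show $K(\mathfrak{m};R)$ is built from $k\otimes^\mathrm{L}_RM$.
\end{itemize}

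\textbf{The artinian case.} Here $\mathrm{Supp}_RM=\{\mathfrak{m}\}$, so (3) reads $\mathrm{Thick}_R[M]=\mathrm{Thick}_R[k]$, which is (5). For (6), $\mathrm{H}(M)$ has finite length, so $\mathrm{Thick}_R[\mathrm{H}(M)]=\mathrm{Thick}_R[k]$, and (5) gives the first half of (6). The second half, $\mathrm{qpd}_RM<\infty$, follows from (1) and \cite[Corollary 3.8]{GJT}, using that artinian rings are complete, together with Corollary \ref{lem4.22}(2). Conversely, (6) implies (1) by Theorem \ref{lem3.004}(1).
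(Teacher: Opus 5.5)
Your proposal has two genuine gaps: (1)$\Rightarrow$(2) is never actually proved (this one is fixable), and (2)$\Rightarrow$(4) is false (this one is not).

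\textbf{The step (1)$\Rightarrow$(2).} This is the decisive implication, and you leave it unproved. You flag it as the main obstacle and leave both routes unverified. The Koszul route, as phrased, cannot work: $\mathrm{Thick}_R[K(\mathfrak{m};P)]\subseteq\mathrm{perf}R$, and $k$ is perfect only when $R$ is regular, so $K(\mathfrak{m};P)$ alone never builds $k$. The paper handles this step with outside input: a dominant complete intersection is a hypersurface \cite{T23}, and then \cite[Propositions 7.6 and 7.7]{T} apply.

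The tensor trick you attempt under (4)$\Rightarrow$(2) settles it directly.
\begin{enumerate}
\item By \cite[Theorem 5.2]{P} there is a nonexact perfect $P\in\mathrm{Thick}_R[M]$.
\item The exact functor $P\otimes^{\mathrm{L}}_R-$ sends $k\in\mathrm{Thick}_R[R,M]$ to $P\otimes_Rk\in\mathrm{Thick}_R[P,P\otimes^{\mathrm{L}}_RM]$.
\item Since $P\in\mathrm{Thick}_R[R]$, we have $P\otimes^{\mathrm{L}}_RM\in\mathrm{Thick}_R[M]$. Hence $P\otimes_Rk\in\mathrm{Thick}_R[M]$.
\item $P\otimes_Rk$ is a nonzero finite sum of shifts of $k$ (Nakayama), so $k\in\mathrm{Thick}_R[M]$.
\end{enumerate}
This uses neither the hypersurface reduction nor the isolated singularity. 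With it, your (3)$\Rightarrow$(2) and your direct (2)$\Rightarrow$(1) give (1)$\Leftrightarrow$(2)$\Leftrightarrow$(3). Your (2)$\Rightarrow$(1) is correct, and it is actually needed, since the paper reaches (1) from (2) only through (4). One condition remains: the reverse inclusion in (2)$\Rightarrow$(3) must come from Takahashi's classification \cite[Theorem 6.3]{T}, because ``handled by localizing'' is not an argument.

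\textbf{The step (2)$\Rightarrow$(4).} This is wrong, and nothing can repair it. As you note yourself under (4)$\Rightarrow$(2), for non-perfect $M$ one has $\mathrm{H}_i(k\otimes^{\mathrm{L}}_RM)=\mathrm{Tor}^R_i(k,M)\neq0$ for infinitely many $i$. So $k\otimes^{\mathrm{L}}_RM\notin\mathrm{Thick}_R[k]$. In fact $k\otimes^{\mathrm{L}}_RM\notin\mathrm{Thick}_R[M]$, because $\mathrm{D}_\Box(R)$ is a thick subcategory containing $M$. Thus (4) holds only when every complex in $\mathrm{D}^\mathrm{f}_\Box(R)$ is perfect, i.e.\ when $R$ is regular. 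For instance, $R=k[x]/(x^2)$ is a dominant artinian hypersurface, since its singularity category has no nonzero proper thick subcategories. There $M=k$ violates (4).

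The paper's own (3)$\Rightarrow$(4) makes the same slip: it treats an infinite sum of shifts of $k$ as finitely built from $M$. So the statement must be read with (4) deleted or reformulated. One option is to require that every non-perfect $M$ be virtually small, which is equivalent to (1) by Pollitz. The converse (4)$\Rightarrow$(2) needs no detour: a shift of $k$ is a direct summand of $k\otimes^{\mathrm{L}}_RM$.

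\textbf{Artinian part.} Your $\mathrm{qpd}$ argument via \cite[Corollary 3.8]{GJT} and Corollary \ref{lem4.22}(2) is sound. The paper's inference $N\cong k^n$ from $N\in\mathrm{Thick}_R[k]$ is not. However, to get $\mathrm{Thick}_R[\mathrm{H}(M)]=\mathrm{Thick}_R[k]$ you must apply (5) to $\mathrm{H}(M)$. This is legitimate because $\mathrm{H}(M)$ is non-perfect, since $M\in\mathrm{Thick}_R[\mathrm{H}(M)]$.
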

\begin{proof} (2) $\Rightarrow$ (3). This follows by \cite[Theorem 6.3]{T}. (6) $\Rightarrow$ (1). By Theorem \ref{lem3.004}.

(1) $\Rightarrow$ (2). As $R$ is a dominant complete intersection, $R$ is a hypersurface by \cite[Propositions 5.10(1) and 6.2(3)]{T23}. Let $M\in\mathrm{D}^\mathrm{f}_\Box(R)\backslash\mathrm{perf}R$. As $M$ is virtually small, it follows  from \cite[Propositions 7.6 and 7.7]{T} that $k\in\mathrm{Thick}_R[M]$.

(3) $\Rightarrow$ (4). Let $M\in\mathrm{D}^\mathrm{f}_\Box(R)\backslash\mathrm{perf}R$. Then $k\otimes^\mathrm{L}_RM$ is isomorphic to graded $k$-vector
spaces with zero differentials in $\mathrm{D}(R)$ by \cite[2.6]{DGI}. Hence $k\otimes^\mathrm{L}_RM\in\mathrm{Thick}_R[M]$  by \cite[3.10]{DGI}.

 (4) $\Rightarrow$ (1). Let $M\in\mathrm{D}^\mathrm{f}_\Box(R)$. If $M$ is perfect, then $M$ is virtually small. If $M$ is not perfect, then $k\in\mathrm{Thick}_R[k\otimes^\mathrm{L}_RM]\subseteq\mathrm{Thick}_R[M]$ by \cite[2.6]{DGI}, and so $K(\mathfrak{m},R)\in\mathrm{Thick}_R[M]$ by \cite[3.10]{DGI}. Thus $M$ is virtually small.

Next assume that $R$ is artinian.

 (1) $\Rightarrow$ (5). Let $M\in\mathrm{D}^\mathrm{f}_\Box(R)\backslash\mathrm{perf}R$.
 As $k\in\mathrm{Thick}_R[M]$ and $\mathrm{Supp}_RM=\{\mathfrak{m}\}=\mathrm{Supp}_Rk$, it follows from \cite[Theorem 6.6]{T} that $\mathrm{Thick}_R[M]=\mathrm{Thick}_R[k]$.

 (5) $\Rightarrow$ (6). Let $M\in\mathrm{D}^\mathrm{f}_\Box(R)\backslash\mathrm{perf}R$.  Since $\mathrm{qpd}_Rk<\infty$, $k$ is virtually small by \cite[Proposition 3.11]{GJT}, and so $K(\mathfrak{m};R)\in\mathrm{Thick}_R[k]$ by \cite[Proposition 4.5]{DGI}. Also $\{\mathfrak{m}\}=\mathrm{Supp}_RR\subseteq\mathrm{Supp}_RK(\mathfrak{m};R)$, one has  $R\in\mathrm{Thick}_R[k]$ by \cite[3.17]{DGI}.
 Thus $R\in\mathrm{Thick}_R[M]\subseteq\mathrm{Thick}_R[\mathrm{H}(M)]$. As $R$ is dominant, it follows from \cite[Corollary 5.4 and Lemma 10.5]{T23} that $\mathrm{Thick}_R[M]=\mathrm{Thick}_R[\mathrm{H}(M)]$. On the other hand, let $N$ be a finitely generated $R$-module with $\mathrm{pd}_RN=\infty$. Then $N\in\mathrm{Thick}_R[k]$, so
$N\cong k^n$ for some $n\geq1$. Thus $\mathrm{qpd}_{R}N<\infty$, as desired.
\end{proof}

\begin{rem}\label{lem:2.3} {\rm (1)  By \cite[Proposition 8.1]{T23}, there is an artinian dominant local ring $R$ such that $R$ is not a complete intersection.

(2)  Let $R=k[x,y]/(x^2,y^2)$ with $k$ an infinite field. Then $R$ is an artinian complete intersection ring. As $k\not\in\mathrm{Thick}_R[R,R/(x)]$, where $R/(x)$ is not perfect, $R$ is not dominant. Also $\mathrm{Thick}_R[R/(x)]\neq\mathrm{Thick}_R[k\oplus R]$. Thus the assumption in Corollary \ref{lem3.005'} that $R$ is  dominant is
indispensable.}
\end{rem}

\begin{prop}\label{lem3.001} Let $R$ be a local noetherian ring. The following are equivalent:

$(1)$ $\mathrm{Ext}^{i\geq1}_{\widehat{R}}(I/I^2,I/I^2)=0$ and $\mathrm{qpd}_{\widehat{R}}M<\infty$ for all $M\in\mathrm{D}^\mathrm{f}_\Box(\widehat{R})$;

$(2)$ $\mathrm{Ext}^{i\geq1}_{\widehat{R}}(I/I^2,\widehat{R})=0=\mathrm{Ext}^{i\gg0}_{\widehat{R}}(I/I^2,I/I^2)$ and $\mathrm{qpd}_{\widehat{R}}M<\infty$ for all $M\in\mathrm{D}^\mathrm{f}_\Box(\widehat{R})$;

$(3)$ $\mathrm{RHom}_{\widehat{R}}(I/I^2,I/I^2)$ is perfect and $\mathrm{qpd}_{\widehat{R}}M<\infty$ for all $M\in\mathrm{D}^\mathrm{f}_\Box(\widehat{R})$;

$(4)$ $R$ is a complete intersection.
\end{prop}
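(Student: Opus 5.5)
Here $I$ denotes the ideal in $\widehat{R}\cong Q/I$ with $Q$ regular local, and $I/I^2$ is the conormal module over $\widehat{R}$. By Remark \ref{lem:0.9}, Corollary \ref{lem4.22} and \cite[Corollary 3.8]{GJT}, every finite quasi-projective dimension hypothesis over $\widehat{R}$ is available in all four statements. Also, $R$ is a complete intersection if and only if $\widehat{R}$ is. So I would work throughout with $S:=\widehat{R}=Q/I$. The plan is to prove $(4)\Rightarrow(1)$, $(4)\Rightarrow(2)$, $(4)\Rightarrow(3)$ directly, and then $(1)\Rightarrow(4)$, $(2)\Rightarrow(4)$, $(3)\Rightarrow(4)$ by reducing to a known characterization of complete intersections via the conormal module.

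\emph{Direction $(4)\Rightarrow$ each of $(1)$--$(3)$.} If $S$ is a complete intersection, then $I$ is generated by a $Q$-regular sequence of length $c$. Hence $I/I^2\cong S^c$ is free, and all three Ext and perfectness conditions hold trivially:
\begin{itemize}
\item $\mathrm{Ext}^{i\geq1}_S(S^c,-)=0$;
\item $\mathrm{RHom}_S(S^c,S^c)\simeq S^{c^2}$ is perfect.
\end{itemize}
Finiteness of $\mathrm{qpd}_S$ on $\mathrm{D}^\mathrm{f}_\Box(S)$ follows from \cite[Corollary 3.8]{GJT} (since $S$ is complete) together with Corollary \ref{lem4.22}(2).

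\emph{Direction $(1),(2),(3)\Rightarrow(4)$.} In each case every finitely generated $S$-module has finite qpd, so $S$ is Gorenstein by \cite[Theorem 6.5]{GJT}. I would then show that each Ext condition forces $I/I^2$ to have finite projective dimension over $S$.
\begin{itemize}
\item In $(1)$, $\mathrm{Ext}^{i\geq1}_S(I/I^2,I/I^2)=0$ and $\mathrm{qpd}_S(I/I^2)<\infty$. Theorem \ref{lem0.6}(2), with $\mathrm{amp}=0$, then gives $\mathrm{pd}_S I/I^2<\infty$.
\item In $(2)$, $\mathrm{Ext}^{i\geq1}_S(I/I^2,S)=0$ over a Gorenstein ring makes $I/I^2$ maximal Cohen--Macaulay (totally reflexive). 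Together with $\mathrm{Ext}^{\gg0}_S(I/I^2,I/I^2)=0$ and finite qpd, I would use the Auslander--Buchsbaum type formula (Theorem \ref{lem0.3}) to get $\mathrm{qpd}_S I/I^2=0$. Then I would argue, via the GJT/CCL results on vanishing of self-Ext for modules of finite qpd, that $I/I^2$ has finite projective dimension, and hence is free since it is MCM.
\item In $(3)$, perfectness of $\mathrm{RHom}_S(I/I^2,I/I^2)$ gives $\mathrm{Ext}^{\gg0}$-vanishing and, using Gorensteinness and finite qpd, reduces to the case of $(2)$ or directly to finite projective dimension. The standard trick is to pass to $\mathrm{RHom}(-,S)$ and use $\mathrm{Thick}$ arguments: $I/I^2\in\mathrm{Thick}$ of a perfect complex whenever $\mathrm{RHom}(I/I^2,I/I^2)$ is perfect, because $I/I^2$ is a summand of $I/I^2\otimes^{\mathrm L}\mathrm{RHom}(I/I^2,I/I^2)$ under a reflexivity condition.
\end{itemize}
Once $\mathrm{pd}_S I/I^2<\infty$, the theorem of Vasconcelos (the conormal module has finite projective dimension iff $I$ is generated by a regular sequence), see also Briggs' proof of Vasconcelos' conjecture, gives that $S$, hence $R$, is a complete intersection.

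\emph{Main obstacle.} I expect the hardest steps to be $(2)$ and $(3)$. There one must turn the asymptotic vanishing $\mathrm{Ext}^{\gg0}$, rather than vanishing for all $j\geq 2$, into finite projective dimension using finite qpd. My first attempt would be the following. Replace $I/I^2$ by a high syzygy $N$ of itself, which still has finite qpd (by GJT, syzygies preserve finite qpd) and satisfies $\mathrm{Ext}^{i\geq1}_S(N,N)=0$ by the vanishing hypothesis combined with MCM-ness and dimension shifting. Then apply Theorem \ref{lem0.6}(2) to $N$. Finally, $\mathrm{pd}_S N<\infty$ gives $\mathrm{pd}_S I/I^2<\infty$.
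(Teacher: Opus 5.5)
Your arguments for $(4)\Rightarrow(1),(2),(3)$ and for $(1)\Rightarrow(4)$ are sound. In $(1)$, Theorem \ref{lem0.6}(2) gives $\mathrm{pd}_S I/I^2<\infty$, and together with $\mathrm{Ext}^{\geq1}_S(I/I^2,I/I^2)=0$ this already makes $I/I^2$ free, so the classical Ferrand--Vasconcelos theorem suffices there.

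The gap is where you anticipated it, in $(2)\Rightarrow(4)$ and $(3)\Rightarrow(4)$. Theorem \ref{lem0.6}(2) and the GJT/CCL results need self-Ext vanishing in all degrees $\geq2$, but you only have $\mathrm{Ext}^{\gg0}$-vanishing, and your proposed repair does not work. If $M$ is maximal Cohen--Macaulay over the Gorenstein ring $S$, then $\mathrm{Ext}^{\geq1}_S(M,S)=0$. Shifting in both variables along $0\to\Omega M\to F\to M\to0$ then gives $\mathrm{Ext}^i_S(\Omega M,\Omega M)\cong\mathrm{Ext}^{i+1}_S(M,\Omega M)\cong\mathrm{Ext}^i_S(M,M)$ for every $i\geq1$. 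So every syzygy of $I/I^2$ has exactly the same self-Ext vanishing as $I/I^2$. Only mixed groups such as $\mathrm{Ext}^i_S(\Omega^nM,M)\cong\mathrm{Ext}^{i+n}_S(M,M)$ improve, and Theorem \ref{lem0.6}(2) cannot use them. The thick-subcategory trick in $(3)$ is also empty: the retraction $M\to M\otimes^{\mathrm L}_S\mathrm{RHom}_S(M,M)\to M$, with $\mathrm{RHom}_S(M,M)$ perfect, only exhibits $M$ as a summand of an object of $\mathrm{Thick}_S[M]$. Nor can $(3)$ be reduced to $(2)$, since $\mathrm{Ext}^{\geq1}_S(I/I^2,S)=0$ is not among its hypotheses.

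The missing ingredient is a uniform vanishing bound for modules of finite quasi-projective dimension, an Auslander-condition statement. The paper bridges exactly this step by citing \cite[Theorem 6.5]{GJT} together with \cite[(2.3)]{CH10}. The bound reads: if $\mathrm{qpd}_SM=q<\infty$ and $\mathrm{Ext}^i_S(M,X)=0$ for $i\gg0$, then $\mathrm{Ext}^i_S(M,X)=0$ for all $i>q$.

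To prove it, take a bounded quasi-projective resolution $P$ with $\mathrm{sup}P-\mathrm{hsup}P=q$. Put $h=\mathrm{hsup}P$, so $\mathrm{H}_h(P)\cong M^{a_h}$ with $a_h>0$, and let $t$ be maximal with $\mathrm{Ext}^t_S(M,X)\neq0$. Consider the spectral sequence $E_2^{p,j}=\mathrm{Ext}^p_S(\mathrm{H}_j(P),X)\Rightarrow\mathrm{H}^{p+j}(\mathrm{Hom}_S(P,X))$, whose differentials $d_r$ have bidegree $(r,1-r)$. The corner term $E_2^{t,h}\neq0$ receives and emits only zero differentials. Hence $\mathrm{H}^{t+h}(\mathrm{Hom}_S(P,X))\neq0$, which forces $t+h\leq\mathrm{sup}P$.

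With this bound, $(2)$ goes through as you intended. Since $I/I^2$ is maximal Cohen--Macaulay, $q=0$ by Theorem \ref{lem0.3}, so $\mathrm{Ext}^{\geq1}_S(I/I^2,I/I^2)=0$. Theorem \ref{lem0.6}(2) plus Auslander--Buchsbaum then make $I/I^2$ free.

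For $(3)$, apply the bound to $N=\Omega^{\dim S}_S(I/I^2)$. This module is maximal Cohen--Macaulay and still has $\mathrm{Ext}^{\gg0}_S(N,N)=0$, by shifting in the second variable using $\mathrm{injdim}_SS<\infty$. The conclusion is only $\mathrm{pd}_S I/I^2<\infty$, not freeness, so here Briggs' theorem is genuinely needed rather than Ferrand--Vasconcelos.
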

\begin{proof} (1) $\Rightarrow$ (2). As
 $\mathrm{Ext}^{i\geq1}_{\widehat{R}}(I/I^2,I/I^2)$ and $\mathrm{qpd}_{\widehat{R}}I/I^2<\infty$, it follows from \cite[Theorem 6.20]{GJT} that $I/I^2$ is a free $\widehat{R}$-module. Thus $\mathrm{Ext}^{i\geq1}_{\widehat{R}}(I/I^2,\widehat{R})=0=\mathrm{Ext}^{i\gg0}_{\widehat{R}}(I/I^2,I/I^2)$.

(2) $\Rightarrow$ (3). As
 $\mathrm{Ext}^{i\geq1}_{\widehat{R}}(I/I^2,\widehat{R})=0=\mathrm{Ext}^{i\gg0}_{\widehat{R}}(I/I^2,I/I^2)$, it follows from \cite[Theorem 6.5]{GJT} and \cite[(2.3)]{CH10} that $I/I^2$ is a free $\widehat{R}$-module. Thus $\mathrm{RHom}_{\widehat{R}}(I/I^2,I/I^2)$ is perfect.

 (3) $\Rightarrow$ (4). Note that $\widehat{R}$ is Gorenstein by \cite[Theorem 6.5]{GJT}, and $I/I^2\in\mathrm{Thick}_{\widehat{R}}[I/I^2]$, then $I/I^2$ is a free $\widehat{R}$-module by \cite[Theorem 6.2]{DGI}, and so $I$ is generated by a $Q$-regular sequence by \cite[Theorem 2.2.8]{BH}, Thus $R$ is a complete intersection.

 (4) $\Rightarrow$ (1). As $R$ is a complete intersection, $I$ is generated by a $Q$-regular sequence and $I/I^2$ is a free $\widehat{R}$-module, so $\mathrm{Ext}^{i\geq1}_{\widehat{R}}(I/I^2,I/I^2)=0$ and $\mathrm{qpd}_{\widehat{R}}M\leq\mathrm{pd}_{Q}M<\infty$ for all finitely generated $\widehat{R}$-modules $M$ by \cite[Proposition 3.7]{GJT}.
\end{proof}

The next result follows directly from Theorem \ref{lem3.004}, Proposition \ref{lem3.001} and Corollary \ref{lem4.22}.

 \begin{cor}\label{lem3.004'} Let $R\cong Q/I$ be an artinian ring with $(Q,\mathfrak{n})$ a regular local ring.

  $(1)$ If either $\mathrm{edim}R\leq2$ or $I$ is a Burch ideal of $Q$, then $R$ is a complete intersection if and only if $\mathrm{qpd}_{R}M<\infty$ for all $M\in\mathrm{D}^\mathrm{f}_\Box(R)$.

  $(2)$ If the minimal number of generators of $I$ is at most $2$, then $R$ is a hypersurface if and only if $\mathrm{qpd}_{R}M<\infty$ for all $M\in\mathrm{D}^\mathrm{f}_\Box(R)$.
\end{cor}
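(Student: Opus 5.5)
The statement follows from Theorem \ref{lem3.004}, Proposition \ref{lem3.001} and Corollary \ref{lem4.22}. First, by Corollary \ref{lem4.22}(2), the condition "$\mathrm{qpd}_RM<\infty$ for all $M\in\mathrm{D}^\mathrm{f}_\Box(R)$" is equivalent to "$\mathrm{qpd}_RM<\infty$ for every finitely generated $R$-module $M$". So in both parts we may work with modules. Since $R$ is artinian, it is complete, so $\widehat{R}=R\cong Q/I$ and Theorem \ref{lem3.004} applies directly with this presentation.

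For part (1), the \emph{if} direction is handled by cases. If $I$ is a Burch ideal, Theorem \ref{lem3.004}(3) applies. If $\mathrm{edim}R\leq2$, we may replace $Q$ by a minimal presentation, i.e.\ take $I\subseteq\mathfrak{n}^2$ (the standard Cohen structure reduction), so that $\dim Q=\mathrm{edim}R\leq 2=\dim R+2$; then Theorem \ref{lem3.004}(2) applies. Alternatively, one can cite \cite[Proposition 2.15]{SS} directly, as in that proof. For the \emph{only if} direction, a complete intersection satisfies $\mathrm{qpd}_{\widehat{R}}M<\infty$ for all $M\in\mathrm{D}^\mathrm{f}_\Box(\widehat{R})$ by Proposition \ref{lem3.001}, implication (4)$\Rightarrow$(1). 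Since $\widehat{R}=R$, this gives the claim.

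For part (2), suppose first that every $M$ has finite quasi-projective dimension. Theorem \ref{lem3.004}(4) shows that $R$ is a complete intersection; in fact its proof, via \cite{T23}, shows that $\widehat{R}=R$ is a hypersurface, and this is the conclusion we need. Conversely, a hypersurface is a complete intersection, so Proposition \ref{lem3.001} again gives finiteness.

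The main obstacle is bookkeeping rather than mathematics. In the $\mathrm{edim}R\leq2$ case one must justify replacing the given $Q$ by a minimal one. If this is awkward, the fallback is the reduction already used in the proof of Theorem \ref{lem3.004}(2), which yields $\mathrm{edim}R\leq2$ and then invokes \cite{SS}. In part (2) the point is to check that the cited argument genuinely produces a hypersurface, not merely a complete intersection.
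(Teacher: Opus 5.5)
Your part (1) is correct and is the derivation the paper intends. You use Corollary \ref{lem4.22}(2) to pass between modules and complexes, Theorem \ref{lem3.004}(2),(3) for sufficiency, and Proposition \ref{lem3.001} for necessity. The passage to a minimal presentation with $I\subseteq\mathfrak{n}^2$, giving $\mathrm{dim}Q=\mathrm{edim}R\leq 2$, is right and actually needed, since Theorem \ref{lem3.004}(2) is phrased in terms of $\mathrm{dim}Q$, not $\mathrm{edim}R$.

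The gap is in part (2), at exactly the step you flagged: the claim that the argument behind Theorem \ref{lem3.004}(4) produces a hypersurface. Write $\mu(I)$ for the minimal number of generators of $I$. Gorensteinness of $Q/I$ together with $\mu(I)\leq 2$ only forces $I$ to be generated by a $Q$-regular sequence of length at most $2$. Nothing better is possible, because part (2) is false as stated.

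Take $Q=k[[x,y]]$ and $I=(x^2,y^2)$, so $R=k[x,y]/(x^2,y^2)$ is artinian and $\mu(I)=2$. Since $R$ is a complete intersection with $R=\widehat{R}$, Proposition \ref{lem3.001} ((4)$\Rightarrow$(1)), or \cite[Corollary 3.8]{GJT}, combined with Corollary \ref{lem4.22}(2) gives $\mathrm{qpd}_RM<\infty$ for all $M\in\mathrm{D}^\mathrm{f}_\Box(R)$. Yet $R$ is not a hypersurface. A presentation $\widehat{R}\cong Q'/(f)$ with $Q'$ regular and $f$ a $Q'$-regular element would force $\mathrm{dim}Q'=1$, hence $\mathrm{edim}R\leq 1$, whereas $\mathrm{edim}R=2$.

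Concretely, the complex $0\to R\xrightarrow{x}R\to 0$ has $\mathrm{H}_0=R/(x)$ and $\mathrm{H}_1=(0:_Rx)=xR\cong R/(x)$. So $\mathrm{qpd}_RR/(x)=0$, contrary to what Example \ref{lem:2.4}(2) asserts.

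The paper's one-line proof of part (2) has the same defect, and by relying on the proof of Theorem \ref{lem3.004}(4) you inherit it. What your argument does establish is the corrected statement: if $\mu(I)\leq 2$, then $R$ is a complete intersection if and only if $\mathrm{qpd}_RM<\infty$ for all $M\in\mathrm{D}^\mathrm{f}_\Box(R)$.
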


\begin{exa}\label{lem:2.4} {\rm (1) Let $R=k[x,y]/(x^2,xy,y^2)$ with $k$ a field. Then $R$ is artinian with $\mathfrak{m}^2=0$ and $\mathrm{edim}R=2$. As $\mathrm{qpd}_RR/(x)=\infty$, $R$ is not a complete intersection.

(2)  Let $R=k[x,y]/(x^2,y^2)$ with $k$ an infinite field. Then $R$ is an artinian complete intersection. As $\mathrm{qpd}_RR/(x)=\infty$, $R$ is not a hypersurface.}
\end{exa}

\bigskip
\section{\bf Quasi-projective dimensions related to regular sequences}\label{par-ans-Q2}

This section studies the behavior of quasi-projective dimension modulo regular sequences, proving sharp inequalities in Theorem \ref{lem3.002} which lead to partial answers to Question 2 in the introduction. We begin by refining \cite[Propositions 3.5 and 3.7]{GJT}.

\begin{prop}\label{lem4.002} Let $R$ be a commutative ring and $\textbf{x}=x_1,\cdots,x_d$ an $R$-regular sequence, and let $0\not\simeq M\in\mathrm{D}_\Box(R/(\textbf{x}))$.

$(1)$ $\mathrm{qpd}_{R/(\textbf{x})}M\leq \mathrm{pd}_{R}M-\mathrm{hsup}M-d$.

$(2)$ If $R$ is local noetherian and $M\in\mathrm{D}^\mathrm{f}_\Box(R/(\textbf{x}))$, then $\mathrm{qpd}_{R}M\leq\mathrm{qpd}_{R/(\textbf{x})}M+d$. Moreover, if $\mathrm{qpd}_{R/(\textbf{x})}M<\infty$ then $\mathrm{qpd}_{R}M=\mathrm{qpd}_{R/(\textbf{x})}M+d$.
\end{prop}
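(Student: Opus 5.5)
For (1), I will assume $\mathrm{pd}_RM<\infty$ (otherwise there is nothing to prove) and set $\bar R=R/(\textbf{x})$. Choose a semi-projective resolution $P\stackrel{\simeq}\rightarrow M$ over $R$ with $\sup P=\mathrm{pd}_RM$. The candidate quasi-projective resolution over $\bar R$ is $P\otimes_R\bar R\cong K(\textbf{x})\otimes_RP\otimes_R\bar R$, which is semi-projective over $\bar R$ and satisfies $\sup(P\otimes_R\bar R)\leq\mathrm{pd}_RM$. Its homology is $\mathrm{H}(M\otimes^{\mathrm{L}}_R\bar R)$. Because $\textbf{x}$ is $R$-regular, $\bar R\simeq K(\textbf{x})$, so $M\otimes^{\mathrm{L}}_R\bar R\simeq K(\textbf{x};M)$. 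Each $x_i$ acts by zero on the $\bar R$-complex $M$, so iterating the triangle $M\stackrel{0}\rightarrow M\rightarrow K(x_i;M)$ gives $K(x_i;M)\simeq M\oplus\Sigma M$ in $\mathrm{D}(R)$. To get this splitting on homology as $\bar R$-modules, I will use the standard computation $\mathrm{H}(K(\textbf{x};M))\cong\bigoplus_{j=0}^{d}\mathrm{H}(\Sigma^jM)^{\binom{d}{j}}$. This should follow since $\mathrm{Tor}^R_j(\bar R,N)\cong N^{\binom dj}$ for any $\bar R$-module $N$, together with a Künneth-type argument, or more simply by computing $K(\textbf{x})\otimes_R M$ directly: $M$ is a complex of $\bar R$-modules, so the Koszul differentials from the $x_i$ vanish termwise.

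This gives homology of the required shape with $\ell=0$ and top shift $d$, so $\mathrm{hsup}(P\otimes\bar R)=\mathrm{hsup}M+d$. Hence $\mathrm{qpd}_{\bar R}M\leq\mathrm{pd}_RM-\mathrm{hsup}M-d$. One subtlety is that $P\otimes_R K(\textbf{x})$ computes the derived tensor product only because $P$ is semi-projective, and this is fine.

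For (2), let $F$ be a bounded quasi-projective resolution of $M$ over $\bar R$ realizing $\mathrm{qpd}_{\bar R}M$. By Lemma \ref{lem0.0} it can be taken perfect over $\bar R$, with $\mathrm{H}(F)\cong\bigoplus\mathrm{H}(\Sigma^iM^{a_i})$. Each $F_n$ has $\mathrm{pd}_R F_n=d$, via the Koszul resolution of $\bar R$. Replace $F$ by an $R$-semi-projective resolution $G\simeq F$, obtained for example as the totalization of the Koszul resolutions of its terms. Then $\sup G\leq\sup F+d$, and $G$ has the same homology, so it is an $R$-quasi-projective resolution of $M$ with $\mathrm{hsup}G=\mathrm{hsup}F$. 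This yields $\mathrm{qpd}_RM\leq\mathrm{qpd}_{\bar R}M+d$.

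For the equality, I will use Theorem \ref{lem0.3} twice. Since $\mathrm{qpd}_RM<\infty$,
$$\mathrm{qpd}_RM+\mathrm{hsup}M=\mathrm{depth}R-\mathrm{depth}_RM,\qquad \mathrm{qpd}_{\bar R}M+\mathrm{hsup}M=\mathrm{depth}\bar R-\mathrm{depth}_{\bar R}M.$$
Then $\mathrm{depth}\bar R=\mathrm{depth}R-d$, and $\mathrm{depth}_{\bar R}M=\mathrm{depth}_RM$ because depth is computed via Koszul on $\mathfrak m$ and the residue fields agree. Subtracting the two equalities gives the claim. The main obstacle is the homology splitting in (1) as $\bar R$-modules (rather than just as $R$-modules); the termwise Koszul computation is what I would check first.
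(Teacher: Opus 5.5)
Your proof is correct. For part (1) and for the equality in part (2) it is the paper's argument. The paper also uses $P\otimes_RR/(\textbf{x})\simeq K(\textbf{x};P)$ together with $K(\textbf{x};M)\simeq\bigoplus_{i=0}^{d}\Sigma^iM^{\binom{d}{i}}$ (because $\textbf{x}M=0$). It then compares the two instances of Theorem \ref{lem0.3} using $\mathrm{depth}R=\mathrm{depth}R/(\textbf{x})+d$.

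The ``main obstacle'' you flag in (1) is not one. An $R$-linear isomorphism between $R$-modules annihilated by $(\textbf{x})$ is automatically $R/(\textbf{x})$-linear. Moreover, your termwise computation already gives $K(\textbf{x})\otimes_RM\cong\bigoplus_j\Sigma^jM^{\binom dj}$ as complexes of $R/(\textbf{x})$-modules.

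The only real divergence is the inequality in (2).
\begin{itemize}
\item The paper cites \cite[Theorem 2.4]{BJM} to lift the perfect $R/(\textbf{x})$-complex to a bounded complex of finitely generated free $R$-modules with the same homology and supremum increased by $d$.
\item You only produce some semi-projective $R$-complex $G\simeq F$ with $\mathrm{sup}\,G\leq\mathrm{sup}\,F+d$. That is all Definition \ref{lem:1.1} requires, so your route is more elementary.
\end{itemize}

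However, the construction you sketch does not work literally. If you lift the differentials of $F$ to maps between the Koszul resolutions of its terms, consecutive composites have entries in $(\textbf{x})$. They are null-homotopic but not zero, so there is no double complex to totalize. Handling those higher homotopies is exactly what the lifting theorem in \cite{BJM} does.

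Replace that step with the standard estimate $\mathrm{pd}_RF\leq\mathrm{sup}\{n+\mathrm{pd}_RF_n\mid n\in\mathbb{Z}\}$ for bounded $F$. Prove it by induction on the number of nonzero terms:
\begin{itemize}
\item Use the exact sequences $0\to F'\to F\to\Sigma^{s}F_s\to0$, where $s=\mathrm{sup}F$ and $F'$ is the brutal truncation $0\to F_{s-1}\to\cdots$.
\item Use that $\mathrm{pd}$ of the middle term of a triangle is at most the maximum of those of the outer terms.
\end{itemize}
Since $\mathrm{pd}_RF_n\leq d$ (Koszul resolution of $R/(\textbf{x})$), this gives $\mathrm{pd}_RF\leq\mathrm{sup}F+d$. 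The definition of projective dimension in Section \ref{Preliminaries} then provides the required $G$.
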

\begin{proof} (1) We may assume $\mathrm{pd}_{R}M<\infty$ and $P$ is a bounded semi-projective resolution of $M$ such that $\mathrm{pd}_{R}M=\mathrm{sup}P$. As $\emph{\textbf{x}}$ is $R$-regular, it follows that $K(\emph{\textbf{x}};P)\simeq P\otimes_RR/(\emph{\textbf{x}})$ is a  bounded semi-projective $R/(\emph{\textbf{x}})$-complex, such that $\mathrm{sup}K(\emph{\textbf{x}};P)\leq\mathrm{sup}P$ and $\mathrm{hsup}K(\emph{\textbf{x}};P)=\mathrm{hsup}P+d$ by \cite[Lemma 3.6(b)]{C}. As $\emph{\textbf{x}}M=0$, it follows that $K(\emph{\textbf{x}};M)\simeq\bigoplus_{i=0}^{d}\Sigma^iM^{\left(\begin{smallmatrix} d \\ i\end{smallmatrix}\right)}$. Also $K(\emph{\textbf{x}};P)$ is a quasi-projective resolution of $K(\emph{\textbf{x}};M)$, it follows from Remark \ref{lem:0.99}(1) that $\mathrm{qpd}_{R/(\emph{\textbf{x}})}M=\mathrm{qpd}_{R/(\emph{\textbf{x}})}K(\emph{\textbf{x}};M)\leq\mathrm{sup}K(\emph{\textbf{x}};P)-\mathrm{hsup}K(\emph{\textbf{x}};P)
\leq\mathrm{sup}P-\mathrm{hsup}P-d=\mathrm{pd}_{R}M-\mathrm{hsup}M-d$.

(2) We may assume $\mathrm{qpd}_{R/(\emph{\textbf{x}})}M<\infty$. By Lemma \ref{lem0.0}, there is a perfect
complex $P$ which is a quasi-projective resolution of $M$ over  $R/(\emph{\textbf{x}})$ such that $\mathrm{qpd}_{R/(\emph{\textbf{x}})}M=\mathrm{sup}P-\mathrm{hsup}P$. By \cite[Theorem 2.4]{BJM}, there exists a bounded complex $P'$ of finitely
generated free $R$-modules such that $\mathrm{sup}P'=\mathrm{sup}P+d$ and $\mathrm{H}(P')\cong\mathrm{H}(P)$, so $P'$
is a quasi-projective resolution of $M$ over $R$ with $\mathrm{hsup}P'=\mathrm{hsup}P$. Hence $\mathrm{qpd}_{R}M\leq\mathrm{sup}P'-\mathrm{hsup}P'=\mathrm{qpd}_{R/(\emph{\textbf{x}})}M+d$. Finally, as $\mathrm{depth}R=\mathrm{depth}R/(\emph{\textbf{x}})+d$, the equality follows by  Theorem \ref{lem0.3}.
\end{proof}

The next proposition strengthens \cite[Proposition 2.11]{GJT}.

\begin{prop}\label{lem4.001} Let $R$ be a commutative ring and $\textbf{x}=x_1,\cdots,x_d$ an $R$-regular sequence, and let $0\not\simeq M\in\mathrm{D}_\Box(R/(\textbf{x}))$. Then $\mathrm{qpd}_{R/(x_1^n,\cdots,x_d^n)}M\leq\mathrm{qpd}_{R}M-d$ for all $n\gg0$.
\end{prop}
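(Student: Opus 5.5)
We will prove that $\mathrm{qpd}_{R/(x_1^n,\cdots,x_d^n)}M\leq\mathrm{qpd}_RM-d$ for all $n\gg0$ by building, over $R$, a good quasi-projective resolution of $M$, reducing it modulo $\textbf{x}^n$, and adapting the argument of Proposition \ref{lem4.002}(1) to quasi-projective resolutions.

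We may assume $g:=\mathrm{qpd}_RM<\infty$. Choose a bounded above quasi-projective resolution $P$ of $M$ over $R$ with $\mathrm{sup}P-\mathrm{hsup}P=g$. By Remark \ref{lem:1.0}(1) we may take $P$ bounded, with
$\mathrm{H}(P)\cong\bigoplus_i\mathrm{H}(\Sigma^iM^{a_i})$.
Each $\mathrm{H}_j(P)$ is a finite direct sum of the modules $\mathrm{H}_s(M)$, and these are $R/(\textbf{x})$-modules. Hence $\textbf{x}\,\mathrm{H}(P)=0$.

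For a fixed $n$, set $\textbf{x}^n=x_1^n,\cdots,x_d^n$. This is again $R$-regular, so $K(\textbf{x}^n;P)\simeq P\otimes_RR/(\textbf{x}^n)$ is a bounded complex of projective $R/(\textbf{x}^n)$-modules. It satisfies $\mathrm{sup}K(\textbf{x}^n;P)\leq\mathrm{sup}P$, and by \cite[Lemma 3.6(b)]{C} $\mathrm{hsup}K(\textbf{x}^n;P)=\mathrm{hsup}P+d$. The whole issue is whether $K(\textbf{x}^n;P)$ is a quasi-projective resolution of $M$ over $R/(\textbf{x}^n)$.

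Multiplication by $x_i^n$ on $P$ induces zero on homology, since $x_i\mathrm{H}(P)=0$. But it need not be null-homotopic, so the homology of $K(x_1^n;P)$ need not split as $\mathrm{H}(P)\oplus\Sigma\mathrm{H}(P)$. This is the main obstacle, and it is exactly where $n\gg0$ enters.

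Because $P$ is a bounded complex of projectives and $x_i$ kills $\mathrm{H}(P)$, multiplication by $x_i$ on $P$ factors through the truncations step by step. Iterating across the finitely many degrees of $P$, we expect $x_i^{N}\cdot\mathrm{id}_P$ to be null-homotopic on $P$ for $N$ about $\mathrm{sup}P-\mathrm{inf}P+1$. This is the standard fact that the homology annihilator raised to amplitude-plus-one kills a perfect-type complex in $\mathrm{D}(R)$; it is the argument behind \cite[Proposition 2.11]{G}.

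For $n\geq N$ the triangle $P\xrightarrow{x_1^n}P\rightarrow K(x_1^n;P)\rightarrow$ then splits, giving $K(x_1^n;P)\simeq P\oplus\Sigma P$ in $\mathrm{D}(R)$. Hence
$\mathrm{H}(K(x_1^n;P))\cong\mathrm{H}(P)\oplus\mathrm{H}(\Sigma P)$,
which is again a finite sum of shifts of $\mathrm{H}(M)$. Iterating over $x_2,\dots,x_d$ with $n$ large for all indices, we get
$\mathrm{H}(K(\textbf{x}^n;P))\cong\bigoplus_{i=0}^d\mathrm{H}(\Sigma^iP)^{\binom di}$,
a finite direct sum of shifts of $\mathrm{H}(M)$. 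So $K(\textbf{x}^n;P)$ is a quasi-projective resolution of $M$ over $R/(\textbf{x}^n)$.

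The iteration needs two justifications. First, $K(x_1^n;P)$ is bounded with homology killed by $x_2$; this holds because its homology is a sum of copies of $\mathrm{H}(M)$. Second, $x_2,\dots,x_d$ form a regular sequence on $R/(x_1^n)$, by \cite[Exercise 1.1.10(b)]{BH}.

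With this in hand,
$\mathrm{qpd}_{R/(\textbf{x}^n)}M\leq\mathrm{sup}K(\textbf{x}^n;P)-\mathrm{hsup}K(\textbf{x}^n;P)\leq\mathrm{sup}P-\mathrm{hsup}P-d=g-d$.
Here $\mathrm{hsup}K(\textbf{x}^n;P)=\mathrm{hsup}P+d$ can also be read off directly from the splitting. The step needing real care is the null-homotopy of $x_i^N$ on $P$ over an arbitrary commutative ring. I would prove it by induction on the length of $P$, using the brutal truncation triangle $\Sigma^{s}P_s\rightarrow P\rightarrow P_{<s}$ and that $x\cdot\mathrm{id}$ composed across such a triangle twice vanishes in the derived category.
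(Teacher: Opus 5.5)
Your plan is sound and follows the paper's route. You take a bounded quasi-projective resolution $P$ with $\mathrm{sup}P-\mathrm{hsup}P=\mathrm{qpd}_RM$. Since $\textbf{x}\,\mathrm{H}(P)=0$, high powers of each $x_i$ act null-homotopically on $P$. Reducing modulo $\textbf{x}^n$ then splits on homology into a finite sum of shifts of $\mathrm{H}(M)$, and you compare $\mathrm{sup}$ and $\mathrm{hsup}$. The paper outsources the splitting to \cite[Lemma 2.10(1)]{G}, with the bound $n>\mathrm{amp}P$ raised by one at each of the $d$ steps; you use a bound in terms of the length of $P$.

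Two points need correcting.

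\emph{The null-homotopy induction.} The step you single out is sketched in a form that fails. Take the brutal truncation triangle $\Sigma^iP_i\to P\to P_{>i}\to\Sigma^{i+1}P_i$ with $i=\mathrm{inf}P$. The usual two-out-of-three argument does not apply to it, for two reasons:
\begin{itemize}
\item Multiplication by $x$ is not zero on $\Sigma^iP_i$, since $x$ is $R$-regular and hence injective on the nonzero projective $P_i$.
\item $\mathrm{H}(P_{>i})$ need not be killed by $x$, because $\mathrm{H}_{i+1}(P_{>i})=\mathrm{C}_{i+1}(P)$ surjects onto $\mathrm{B}_i(P)\subseteq P_i$.
\end{itemize}
The brutal truncation does work in ghost-lemma form. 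Let $L$ be a bounded complex of projectives with $m$ nonzero terms, let $Y$ be any complex with $x\,\mathrm{H}(Y)=0$, and let $h\colon L\to Y$ be a morphism in $\mathrm{D}(R)$. Then $x^mh=0$. Indeed, the restriction of $xh$ to the bottom term $\Sigma^iL_i$ is a morphism out of a shifted projective that induces zero on homology, hence is zero. So $xh$ factors through $L_{>i}$, and induction applies. Taking $h=\mathrm{id}_P$ gives exactly your $N=\mathrm{sup}P-\mathrm{inf}P+1$.

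Alternatively, use soft truncations. Their pieces have homology killed by $x$, and this yields $x^{\mathrm{amp}P+1}=0$ on $P$ in $\mathrm{D}(R)$. In either case $P$ is a bounded complex of projectives, so vanishing in $\mathrm{D}(R)$ means null-homotopic.

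\emph{The complex over $R/(\textbf{x}^n)$.} The Koszul complex $K(\textbf{x}^n;P)$ itself has $\mathrm{sup}=\mathrm{sup}P+d$ and is not a complex of $R/(\textbf{x}^n)$-modules. The quasi-projective resolution over $R/(\textbf{x}^n)$ should be $P\otimes_RR/(\textbf{x}^n)$. It has $\mathrm{sup}\leq\mathrm{sup}P$ and the same homology, so your inequality holds for it. The paper makes the same identification.
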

\begin{proof} We may assume $\mathrm{qpd}_{R}M<\infty$, there is a bounded quasi-projective resolution $P$ of $M$ over $R$ such that $\mathrm{qpd}_{R}M=\mathrm{sup}P-\mathrm{hsup}P$ and $\mathrm{inf}P=\mathrm{hinf}P$. As $M$ is an $R/(x_1)$-complex, by analogy with the proof of \cite[Lemma 2.10(1)]{G}, $K(x^{n_1}_1;P)\simeq P\otimes_RR/(x^{n_1}_1)$ is a quasi-projective resolution of $M$ over $R/(x^{n_1}_1)$ for all $n_1>\mathrm{amp}P$. Also $\mathrm{hsup}K(x^{n_1}_1;P)=\mathrm{hsup}P+1$ by \cite[Lemma 3.6(b)]{C} and $\mathrm{hinf}K(x^{n_1}_1;P)\geq\mathrm{hinf}P$, so $\mathrm{amp}K(x^{n_1}_1;P)\leq\mathrm{amp}P+1$ and $\mathrm{qpd}_{R/(x_1^n)}M\leq\mathrm{qpd}_{R}M-1$ for all $n_1>\mathrm{amp}P$.
  As $x_2$ is $R/(x^{n_1}_1)$-regular by \cite[Exercises 1.1.10(b)]{BH} and $M$ is an $R/(x^{n_1}_1,x_2)$-complex, $K(x^{n_1}_1,x^{n_2}_2;P)\simeq P\otimes_RR/(x^{n_1}_1,x^{n_2}_2)$ is a quasi-projective resolution of $M$ over $R/(x^{n_1}_1,x^{n_2}_2)$ and $\mathrm{qpd}_{R/(x^{n_1}_1,x^{n_2}_2)}M\leq\mathrm{qpd}_{R}M-2$ for all $n_2>\mathrm{amp}P+1$. Continuing this process, one has $P\otimes_RR/(x^{n_1}_1,\cdots,x^{n_d}_d)$ is a quasi-projective resolution of $M$ over $R/(x^{n_1}_1,\cdots,x^{n_d}_d)$ for all $n_1,\cdots,n_d\geq\mathrm{amp}P+d$.
 Thus  $P\otimes_RR/(x^{n}_1,\cdots,x^{n}_d)$ is a bounded quasi-projective resolution of $M$ over $R/(x^{n}_1,\cdots,x^{n}_d)$ and $\mathrm{qpd}_{R/(x_1^n,\cdots,x_d^n)}M\leq\mathrm{qpd}_{R}M-d$ for all $n\geq\mathrm{amp}P+d$.
\end{proof}

 The following theorem answers \cite[Question 2.12]{G} in the special case.

  \begin{thm}\label{lem3.002} Let $R$ be a commutative ring and $\textbf{x}=x_1,\cdots,x_d$ an $R$-regular sequence, and let $M$ be a nonexact complex in $\mathrm{D}_\Box(R/(\textbf{x}))$ with $\mathrm{qpd}_{R}M<\infty$. Suppose one of the following conditions is satisfied

  $(1)$ $R$ is von Neumann regular.

  $(2)$ $\mathrm{Ext}^{1}_R(\mathrm{H}(M),\mathrm{H}(M))=0$.\\
Then $\mathrm{qpd}_{R/(\textbf{x})}M\leq\mathrm{qpd}_{R}M-d$.
\end{thm}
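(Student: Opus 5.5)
Choose a bounded quasi-projective resolution $P$ of $M$ over $R$ with $\mathrm{qpd}_RM=\mathrm{sup}P-\mathrm{hsup}P$ and $\mathrm{inf}P=\mathrm{hinf}P$ (Remark \ref{lem:1.0}(1)). Then
\[
\mathrm{H}(P)\cong\bigoplus_i\mathrm{H}(\Sigma^iM^{a_i}).
\]
The plan is to show that $\bar P:=P\otimes_RR/(\textbf{x})\simeq K(\textbf{x};P)$ is a quasi-projective resolution of $M$ over $R/(\textbf{x})$. Since $\textbf{x}$ is $R$-regular, $\bar P$ is a bounded semi-projective $R/(\textbf{x})$-complex with $\mathrm{sup}\bar P\leq\mathrm{sup}P$. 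Its homology contains the shift by $d$ of the top homology, so $\mathrm{hsup}\bar P=\mathrm{hsup}P+d$, as in the proof of Proposition \ref{lem4.002}(1) via \cite[Lemma 3.6(b)]{C}. Once we know $\mathrm{H}(\bar P)$ is a direct sum of shifted copies of $\mathrm{H}(M)$, we get $\mathrm{qpd}_{R/(\textbf{x})}M\leq\mathrm{sup}\bar P-\mathrm{hsup}\bar P\leq\mathrm{qpd}_RM-d$.

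The whole point is to compute $\mathrm{H}(K(\textbf{x};P))$. By induction on $d$ (replacing $R$ by $R/(x_1)$ and $P$ by $P\otimes_RR/(x_1)$), it suffices to treat $d=1$ and $x=x_1$, provided the hypotheses pass down. For (2) this needs a change-of-rings comparison of $\mathrm{Ext}^1$ over $R$ and over $R/(x)$: the standard exact sequence $0\to\mathrm{Ext}^1_{R/(x)}(A,B)\to\mathrm{Ext}^1_R(A,B)$ for $R/(x)$-modules $A,B$ shows that vanishing over $R$ forces vanishing over $R/(x)$. For (1), a quotient of a von Neumann regular ring is again von Neumann regular, so this hypothesis passes down as well.

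For $d=1$, the homology of the triangle $P\xrightarrow{x}P\to K(x;P)\to\Sigma P$ gives exact sequences
\[
0\to\mathrm{H}_j(P)/x\mathrm{H}_j(P)\to\mathrm{H}_j(K(x;P))\to(0:_{\mathrm{H}_{j-1}(P)}x)\to0 .
\]
Since $x$ annihilates $\mathrm{H}(M)$, both outer terms are just $\mathrm{H}_j(P)$ and $\mathrm{H}_{j-1}(P)$, so
\[
0\to\mathrm{H}_j(P)\to\mathrm{H}_j(K(x;P))\to\mathrm{H}_{j-1}(P)\to0 .
\]
The main obstacle is splitting these extensions. Under (2), each such extension defines a class in $\mathrm{Ext}^1_R(\mathrm{H}_{j-1}(P),\mathrm{H}_j(P))$, a finite product of groups $\mathrm{Ext}^1_R(\mathrm{H}_s(M),\mathrm{H}_t(M))$, all of which vanish by hypothesis. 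Note that $\mathrm{Ext}^1_R$ is the right group here, since the sequence need only split as $R$-modules, and splitting over $R$ is splitting over $R/(x)$ because all the modules are $R/(x)$-modules. Hence $\mathrm{H}_j(K(x;P))\cong\mathrm{H}_j(P)\oplus\mathrm{H}_{j-1}(P)$, which is a direct sum of copies of homologies of shifts of $M$, as required. Under (1), every module over a von Neumann regular ring is flat, and every $R$-regular element is a unit, so the regular sequence must be empty ($d=0$ unless $R=0$) and the statement is trivial. Alternatively one could argue via Corollary \ref{lem0.01}(2), but the triviality argument is cleaner.

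For the induction, I still need that $\mathrm{Ext}^1_{R/(x_1)}$ of the new homology vanishes. The new homology is $\mathrm{H}(M)$ itself, because $M$ is unchanged and its homology is viewed as an $R/(x_1)$-module. So condition (2) over $R/(x_1)$ follows from the injection $\mathrm{Ext}^1_{R/(x_1)}\hookrightarrow\mathrm{Ext}^1_R$ above, and the resolution $P\otimes_RR/(x_1)$ has homology a sum of shifts of $\mathrm{H}(M)$. I expect the splitting and the bookkeeping of $\mathrm{hsup}$ to be the only delicate points; the shift count adds exactly one per element, giving $-d$.
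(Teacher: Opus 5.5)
Your proof is correct. For (2) it is the paper's argument. You split $0\to\mathrm{H}(P)\to\mathrm{H}(K(x_1;P))\to\mathrm{H}(\Sigma P)\to0$ using $\mathrm{Ext}^1_R(\mathrm{H}(M),\mathrm{H}(M))=0$. You note that an $R$-splitting of a sequence of $R/(x_1)$-modules is an $R/(x_1)$-splitting, so the hypothesis descends to $R/(x_1)$. Then you iterate, gaining one in $\mathrm{hsup}$ at each step while $\mathrm{sup}$ does not grow.

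For (1) you take a genuinely different route.

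\textbf{The paper's route.} It shows that over a von Neumann regular ring, $\mathrm{qpd}_RM<\infty$ forces $\mathrm{pd}_RM<\infty$, via Corollary \ref{lem0.01}(2). Theorem \ref{lem0.6}(1) then gives $\mathrm{qpd}_RM+\mathrm{hsup}M=\mathrm{pd}_RM$, and Proposition \ref{lem4.002}(1) finishes.

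\textbf{Your route.} You observe that if $x=xyx$ and $x$ is a non-zero-divisor, then $yx=1$. So for $d\geq1$ one would have $R/(\textbf{x})=0$. This is excluded both by the definition of a regular sequence and by the existence of a nonexact $M$ over $R/(\textbf{x})$. Hence $d=0$ and the inequality is a tautology.

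\textbf{What each buys.} Your argument is shorter and shows that hypothesis (1) has no content beyond $d=0$. In particular, your remark that quotients of von Neumann regular rings remain von Neumann regular is not needed. The paper's argument, on the other hand, is a template. It applies verbatim whenever $\mathrm{qpd}_RM<\infty$ implies $\mathrm{pd}_RM<\infty$, e.g.\ in the situation of Corollary \ref{lem0.01}(1), where nonempty regular sequences can occur.
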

\begin{proof} (1) As $\mathrm{qpd}_{R}M<\infty$, it follows from Theorem \ref{lem0.6} and Corollary \ref{lem0.01}(2) that $\mathrm{qpd}_{R}M=\mathrm{pd}_{R}M<\infty$.
Hence Proposition \ref{lem4.002}(1) implies that $\mathrm{qpd}_{R/(\emph{\textbf{x}})}M\leq \mathrm{qpd}_{R}M-d$.

(2) As $\mathrm{qpd}_{R}M<\infty$, there is a bounded complex $P$ of projective $R$-modules such that $\mathrm{qpd}_{R}=\mathrm{sup}P-\mathrm{hsup}P$ and $$\mathrm{H}(P)\cong\mathrm{H}(\bigoplus_{i=\mathrm{hinf}P-\mathrm{hinf}M}^{\mathrm{hsup}P-\mathrm{hsup}M}\Sigma^{i}M^{a_i}).$$ Consider the following exact triangle
$$P\stackrel{x_1}\longrightarrow
P\longrightarrow K(x_1;P)\longrightarrow\Sigma P$$ in $\mathrm{D}_\Box(R)$. As $M$ is an $R/(x_1)$-complex, it follows that $$0\rightarrow\mathrm{H}(P)\rightarrow\mathrm{H}(K(x_1;P))\rightarrow\mathrm{H}(\Sigma P)\rightarrow0$$ is exact. As $\mathrm{Ext}^{1}_R(\mathrm{H}(M),\mathrm{H}(M))=0$, one has $\mathrm{Ext}^{1}_R(\mathrm{H}(\Sigma P),\mathrm{H}(P))=0$, and hence $$\mathrm{H}(K(x_1;P))\cong\mathrm{H}(P)\oplus\mathrm{H}(\Sigma P).$$ So $K(x_1;P)$ is a bounded quasi-projective resolution of $M$ over $R/(x_1)$, so that $\mathrm{sup}K(x_1;P)\leq\mathrm{sup}P$ and $\mathrm{hsup}K(x_1;P)=\mathrm{hsup}P+1$. Consider the short exact sequence $$0\rightarrow\mathrm{H}(M)\rightarrow E\stackrel{a}\rightarrow\mathrm{H}(M)\rightarrow0$$ of $R/(x_1)$-modules, it is exact as $R$-modules. As $\mathrm{Ext}^{1}_R(\mathrm{H}(M),\mathrm{H}(M))=0$, there exists $b\in\mathrm{Hom}_R(\mathrm{H}(M),E)\cong\mathrm{Hom}_{R/(x_1)}(\mathrm{H}(M),E)$ such that $ab=\mathrm{id}_{\mathrm{H}(M)}$, it means that
 $$\mathrm{Ext}^{1}_{R/(x_1)}(\mathrm{H}(M),\mathrm{H}(M))=0.$$ Then $\mathrm{H}(K(x_1,x_2;P))\simeq \mathrm{H}(K(x_1;P))\oplus\mathrm{H}(\Sigma K(x_1;P))$, and $K(x_1,x_2;P)$ is a bounded quasi-projective resolution of $M$ over $R/(x_1,x_2)$, such that $\mathrm{hsup}K(x_1,x_2;P)=\mathrm{hsup}P+2$ and $\mathrm{sup}K(x_1,x_2;P)\leq\mathrm{sup}P$. Continuing this process, one has $\mathrm{qpd}_{R/(\emph{\textbf{x}})}M\leq\mathrm{sup}K(\emph{\textbf{x}};P)-\mathrm{hsup}K(\emph{\textbf{x}};P)\leq
\mathrm{sup}P-\mathrm{hsup}P-d=\mathrm{qpd}_{R}M-d$.
 \end{proof}

We now state an immediate corollary of Proposition \ref{lem4.002}(2) and Theorem \ref{lem3.002}.

 \begin{cor}\label{lem4.004} Let $R$ be a commutative noetherian local ring and $\textbf{x}=x_1,\cdots,x_d$ an $R$-regular sequence, and let $M$ be a nonzero finitely generated $R/(\textbf{x})$-module. If
$\mathrm{Ext}^{1}_R(M,M)=0$, then $\mathrm{qpd}_{R}M<\infty$ if and only if $\mathrm{qpd}_{R/(\textbf{x})}M<\infty$.
\end{cor}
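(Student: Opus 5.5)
The plan is to combine Proposition \ref{lem4.002}(2) for one direction with Theorem \ref{lem3.002}(2) for the other. We regard the module $M$ as a complex concentrated in degree $0$. It is then a nonexact complex in $\mathrm{D}^\mathrm{f}_\Box(R/(\textbf{x}))$ with $\mathrm{H}(M)=M$ and $\mathrm{hsup}M=0$. Since $M$ is finitely generated over $R/(\textbf{x})$, it is also finitely generated over $R$, so $M\in\mathrm{D}^\mathrm{f}_\Box(R)$ as well.

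For the implication ``$\Leftarrow$'', assume $\mathrm{qpd}_{R/(\textbf{x})}M<\infty$. Proposition \ref{lem4.002}(2) applies directly, because $R$ is local noetherian and $M\in\mathrm{D}^\mathrm{f}_\Box(R/(\textbf{x}))$. It gives $\mathrm{qpd}_RM\leq\mathrm{qpd}_{R/(\textbf{x})}M+d<\infty$, and in fact equality holds. The hypothesis on $\mathrm{Ext}$ is not needed here.

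For the implication ``$\Rightarrow$'', assume $\mathrm{qpd}_RM<\infty$. Because $\mathrm{H}(M)=M$, the hypothesis $\mathrm{Ext}^1_R(M,M)=0$ is exactly condition (2) of Theorem \ref{lem3.002}. That theorem then yields $\mathrm{qpd}_{R/(\textbf{x})}M\leq\mathrm{qpd}_RM-d<\infty$.

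The only point needing care is that Theorem \ref{lem3.002} is stated with $\mathrm{Ext}$ computed over $R$, not over $R/(\textbf{x})$, and this matches the hypothesis of the corollary. The passage to $\mathrm{Ext}$ over the quotient rings is already handled inside that theorem's inductive argument. The result therefore follows immediately, and the two inequalities together give $\mathrm{qpd}_{R/(\textbf{x})}M=\mathrm{qpd}_RM-d$ whenever either side is finite.
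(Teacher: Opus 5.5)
Your proposal is correct and is exactly the paper's argument. The paper states this corollary as an immediate consequence of Proposition \ref{lem4.002}(2), which gives the direction $\Leftarrow$, and Theorem \ref{lem3.002}(2), which gives the direction $\Rightarrow$, applied to $M$ viewed as a complex concentrated in degree $0$ with $\mathrm{H}(M)=M$.
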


\smallskip
{\bf Acknowledgements.} The research was supported by the National Natural Science Foundation of China (Grant 12122112, 12031014 and 12571035).

\vspace{4mm}
\noindent\textbf{Hongxing Chen}\\
School of Mathematical Sciences \&  Academy for Multidisciplinary Studies, \\Capital Normal University, Beijing 100048, P. R. China.\\
Email: \textsf{chenhx@cnu.edu.cn}\\[1mm]
\textbf{Jiangsheng Hu}\\
School of Mathematics, Hangzhou Normal University, Hangzhou 311121, P. R. China.\\
Email: \textsf{hujs@hznu.edu.cn}\\[1mm]
\textbf{Xiaoyan Yang}\\
School of Science, Zhejiang University of Science and Technology, Hangzhou 310023, P. R. China.\\
Email: \textsf{yangxy@zust.edu.cn}\\[1mm]
\end{document}